\theoremstyle{plain}
\newtheorem{thm}{Theorem}[section]
\newtheorem{defi}[thm]{Definition}
\newtheorem{lem}[thm]{Lemma}
\newtheorem{prop}[thm]{Proposition}
\newtheorem{rmk}[thm]{Remark}
\newcommand{\R}{\mathbb{R}}
\newcommand{\N}{\mathbb{N}}
\newcommand{\zer}{{\rm zer}}
\DeclareMathOperator{\Id}{Id}
\DeclareMathOperator{\trace}{tr}
\DeclareMathOperator{\argmin}{argmin}
\title{Long-Time Analysis of Stochastic Heavy Ball Dynamics for Convex Optimization and Monotone Equations}
\author{Radu Ioan Bo\cb{t} \footnote{Faculty of Mathematics, University of Vienna, Oskar-Morgenstern-Platz 1, 1090 Vienna, Austria, e-mail: \url{radu.bot@univie.ac.at}. Research partially supported by the Austrian Science Fund (FWF), project 10.55776/P34922.}  \and Chiara Schindler \footnote{Faculty of Mathematics, University of Vienna, Oskar-Morgenstern-Platz 1, 1090 Vienna, Austria, e-mail: \url{chiara.schindler@univie.ac.at}.}}
\date{}
\begin{document}

\maketitle

\begin{abstract}
In a separable real Hilbert space, we study the problem of minimizing a convex function with Lipschitz continuous gradient in the presence of noisy evaluations. To this end, we associate a stochastic Heavy Ball system, incorporating a friction coefficient, with the optimization problem. We establish existence and uniqueness of trajectory solutions for this system. Under a square integrability condition for the diffusion term, we prove almost sure convergence of the trajectory process to an optimal solution, as well as almost sure convergence of its time derivative to zero. Moreover, we derive almost sure and expected convergence rates for the function values along the trajectory towards the infimal value. Finally, we show that the stochastic Heavy Ball system is equivalent to a Su-Boyd-Cand\`{e}s-type system for a suitable choice of the parameter function, and we provide corresponding convergence rate results for the latter.

In the second part of this paper, we extend our analysis beyond the optimization framework and investigate a monotone equation induced by a monotone and Lipschitz continuous operator, whose evaluations are assumed to be corrupted by noise. As before, we consider a stochastic Heavy Ball system with a friction coefficient and a correction term, now augmented by an additional component that accounts for the time derivative of the operator. We establish analogous convergence results for both the trajectory process and its time derivative, and derive almost sure as well as expected convergence rates for the decay of the residual and the gap function along the trajectory. As a final result, we show that a particular instance of the stochastic Heavy Ball system for monotone equations is equivalent to a stochastic second-order dynamical system with a vanishing damping term. Remarkably, this system exhibits fast convergence rates for both the residual and gap functions.
\end{abstract}

\noindent\textbf{Key Words.} stochastic Heavy Ball method,  convex optimization,  monotone equation,  convergence of trajectories,  convergence rates,  time scaling
\newline
\noindent\textbf{AMS subject classification.} 34F05, 37N40, 47H05, 60H10

\section{Introduction}
For the minimization problem of a convex differentiable function $f:\mathcal{H}\rightarrow \R$ with $L_{\nabla f}$-Lipschitz continuous gradient, where $\mathcal{H}$ denotes a separable real Hilbert space,
\begin{align}\label{eq:opt}
    \min_{y\in\mathcal{H}} f(y),
\end{align}
and assume that its set of solutions $\argmin f = \{y^* \in\mathcal{H}: f(y^*) = \inf_\mathcal{H} f\}$ is nonempty.  Here,  $ \inf_\mathcal{H} f \in \R$ denotes the infimal value of $f$. 

We attach to this minimization problem the stochastic Heavy Ball with friction system on $[t_0, +\infty)$, where $t_0 \geq 0$,
\begin{align}\tag{SHBF}\label{eq:SHBF}
    \begin{cases}
    dY(t) = X(t) dt
    \\ dX(t) = (-\lambda X(t) - b(t) \nabla f(Y(t))) dt + \sigma_Y(t) dW(t)
    \\ Y(t_0) = Y_0, \ \mbox{} \ X(t_0) = X_0,
    \end{cases}
\end{align}
defined on a filtered probability space $(\Omega,\mathcal{F},\{\mathcal{F}_t\}_{t\geq t_0},\mathbb{P})$, where $W$ denotes a $\mathcal{H}$-valued Brownian motion, the diffusion term, denoted by $\sigma_Y :[t_0,+\infty)\rightarrow \mathcal{L}(\mathcal{H},\mathcal{H})$, is a measurable map, $\lambda >0$  and $b:[t_0,+\infty) \rightarrow (0,+\infty)$ is a continuously differentiable and nondecreasing function.

Our first goal is to establish the existence and uniqueness of trajectory solutions of~\eqref{eq:SHBF} and to investigate their long-time behavior. Assuming that $\sigma_Y$ is square integrable with respect to the Hilbert–Schmidt norm  $\|\cdot\|_{HS}$ on $\mathcal{L}(\mathcal{H},\mathcal{H})$, we derive for $f(Y(t)) - \inf_\mathcal{H} f$ an almost sure (a.s.) convergence rate of $o\left(\frac{1}{b(t)}\right)$ and a convergence rate of $\mathcal{O}\left(\frac{1}{b(t)}\right)$ in expectation as $t \to +\infty$. Moreover, we show that the trajectory process $Y(t)$ converges, almost surely, weakly to a minimizer of $f$, and that $\|X(t)\|$ converges almost surely to $0$ as $t \to +\infty$.

Note that~\eqref{eq:SHBF} is the stochastic counterpart of the deterministic system
\begin{align}\label{HBtimescaling}
    \ddot{y}(t) + \lambda \dot{y}(t) + b(t)\nabla f(y(t)) = 0,
\end{align}
with Cauchy data $y(t_0) := Y_0$ and $\dot{y}(t_0) := X_0$, which has recently been investigated in \cite{attouch-bot-hulett-nguyen--heavy-ball} from the viewpoint of its asymptotic properties. This system as its turn can be seen as the time parametrization of the Heavy Ball dynamics with friction
\begin{align}\label{HBtimescaling-no-b}
    \ddot{y}(t) + \lambda \dot{y}(t) + \nabla f(y(t)) = 0,
\end{align}
first introduced by Polyak in in~\cite{polyak-optimization, polyak-speedingupconvergence}. \'Alvarez later showed in \cite{alvarez} that the trajectory $y(t)$ satisfies $f(y(t)) \to \inf_\mathcal{H} f$ and converges weakly to a minimizer of $f$ as $t \to +\infty$. According to ~\cite{attouch-goudou-redont}, $y(t)$ can be interpreted as the horizontal position of a heavy ball moving alongside the graph of $f$ under viscous friction with coefficient $\lambda$. This interpretation gives the system its well-known name, the ``Heavy Ball dynamics".

Using the time–rescaling technique introduced in the deterministic setting by Attouch, Bo\cb{t} and Ngyuen in~\cite{attouch-bot-nguyen}, we relate the stochastic Heavy Ball dynamics~\eqref{eq:SHBF} to the following dynamics with vanishing damping of Su-Boyd-Cand\`{e}s type on $[s_0, +\infty)$, for $s_0 > 0$ and $\alpha > 3$,
\begin{align}\label{eq:SAVD-alpha}\tag{SAVD}
    \begin{cases}
        dZ(s) = Q(s)ds
        \\ dQ(s) = \left(-\frac{\alpha}{s}Q(s) - \nabla f(Z(s))\right) ds + \sigma_{Z}(s) dW(s)
        \\ Z(s_0) = Z_0, \ \mbox{} \ Q(s_0) = Q_0,
    \end{cases}
\end{align}
and establish asymptotic properties for the latter by transferring them from the original system. In particular,  we demonstrate that $f(Z(s)) - \inf_{\cal H} f =o\left(\frac{1}{s^2}\right)$ almost surely, and $\mathbb{E}\left(f(Z(s)) - \inf_{\cal H} f  \right) = \mathcal{O}\left(\frac{1}{s^2}\right)$, in addition to the almost sure weak convergence of $Z(s)$ to a minimizer of $f$ as $s \to +\infty$. In this way, we recover -- via a different route -- the results obtained by Maulen-Soto, Fadili, Attouch and Ochs in~\cite{mfao-stochasticintertialgradient}. Observe that \eqref{eq:SAVD-alpha} is indeed a stochastic version of the celebrated deterministic Su-Boyd-Cand\`{e}s system \cite{su-boyd-candes}
\begin{align*}
    \ddot{z}(s) + \frac{\alpha}{s}\dot{z}(s) + \nabla f(z(s)) = 0,
\end{align*}
which shows a rate of convergence for $f(z(s)) - \inf_{\cal H}$ of $\mathcal{O}\left(\frac{1}{s^2}\right)$ as $s \to +\infty$, thus outperforming the classical gradient flow, which exhibits a rate of convergence for $f(z(s)) - \inf_{\cal H}$ of $o\left(\frac{1}{s}\right)$ as $s \to +\infty$. Even more, according to \cite{attouch-chbani-peypoquet-riahi}, if $\alpha >3$, then $f(z(s)) - \inf_{\cal H}$ of $o\left(\frac{1}{s^2}\right)$, and the trajectory solution $z(s)$ converges weakly to a minimizer of $f$ as $s \to +\infty$. An explicit discretization of this system leads to the Nesterov accelerated gradient method \cite{nesterov-acc-gradient, chambolle-dossal}, which shares the asymptotic properties of the continuous time dynamics.

It is worth noting that the accelerated convergence of function values observed in the deterministic Su–Boyd–Cand\`{e}s dynamical system, when compared to the classical gradient flow, also manifests in the stochastic setting. In particular, analogous improvements arise for the stochastic differential equation \eqref{eq:SAVD-alpha}, when compared with the stochastic gradient flow
\begin{align*}
    \begin{cases}
        dZ(s) = -\nabla f(Z(s))dt + \sigma_Z(s)dW(s)
        \\ Z(s_0) = Z_0,
    \end{cases}
\end{align*}
that has been explored by Maulen-Soto, Fadili and Attouch in~\cite{soto-fadili-attouch}.

In the second part of the paper, we will generalize the framework to the problem of solving the monotone equation
\begin{align}\label{eq:mon}
    V(y) = 0,
\end{align}
where $V:\mathcal{H}\rightarrow\mathcal{H}$ is a monotone and $L_V$-Lipschitz continuous operator. We assume that the set of zeros of $V$, $\zer V=\{y^* \in \mathcal{H}: V(y^*) = 0\}$, is nonempty. The operator $V$ is said to be monotone if
\begin{align*}
    \langle V(y)-V(x), y-x\rangle \geq 0 \ \mbox{for every} \ x,y\in\mathcal{H}.
\end{align*}
The most illustrative examples of monotone operators include $V=\nabla f$, the gradient of a convex and differentiable function $f : {\mathcal H} \rightarrow \R$, and the resolvent $V = (\Id + M)^{-1} : {\mathcal H} \to {\mathcal H}$ of a (set-valued) maximally monotone operator $M : \mathcal{H} \to 2^\mathcal{H}$, where $\Id$ denotes the identity operator on ${\mathcal H}$. A further example of a monotone operator is the saddle-point operator
$$V:\mathcal{H} \times \mathcal{G} \to \mathcal{H} \times \mathcal{G}, \quad V(x,y) = \left (\nabla_x \Phi(x,y), -\nabla_y \Phi(x,y) \right)$$
of a convex-concave differentiable function $\Phi : \mathcal{H} \times \mathcal{G} \to \R$.

We attach to the monotone equation \eqref{eq:mon} the following stochastic Heavy Ball system on $[t_0, +\infty)$, for $t_0 \geq 0$, 
\begin{align}\tag{SHBFOP}\label{eq:SHBF-op}
    \begin{cases}
        dY(t) = X(t)dt
        \\ dV(Y(t)) = H(t) dt
        \\ dX(t) = \left(-\lambda X(t) - \mu(t) H(t) - \gamma(t) V(Y(t))\right) dt + \sigma_Y(t) dW(t)
          \\ Y(t_0) = Y_0, \ \mbox{} \ X(t_0) = X_0,
    \end{cases}
\end{align}
where $X(\cdot)$, $Y(\cdot)$ and $V(Y(\cdot))$ are adapted stochastic processes with the property that there exists a continuous adapted process $H(\cdot)$ such that $dV(Y(t)) = H(t)dt$ for every $t \geq t_0$. In addition, we assume that $\lambda >0$, $\mu:[t_0,+\infty) \rightarrow (0,+\infty)$ is a continuously differentiable and nondecreasing function, and $\gamma:[t_0,+\infty) \rightarrow (0,+\infty)$ a continuous function. As in the optimization setting, first we establish the existence and uniqueness of trajectory solutions of \eqref{eq:SHBF-op}. Assuming that $\sigma_Y$ is square integrable with respect to the Hilbert–Schmidt norm  $\|\cdot\|_{HS}$ on $\mathcal{L}(\mathcal{H},\mathcal{H})$, we derive for the residual $\|V(Y(t))\|$ and the gap function $\langle Y(t) - y^*, V(Y(t))\rangle$, where $y^* \in \zer V$, an almost sure convergence rate of $o\left(\frac{1}{\mu(t)}\right)$ and a convergence rate of $\mathcal{O}\left(\frac{1}{\mu(t)}\right)$ in expectation as $t \to +\infty$. Moreover, we show that the trajectory process $Y(t)$ converges, almost surely, weakly to a zero of $V$, and that $\|X(t)\|$ converges almost surely to $0$ as $t \to +\infty$.

Note that \eqref{eq:SHBF-op} is the stochastic counterpart of the deterministic system
\begin{align}\label{eq:sysmon}
    \ddot{y}(t) + \lambda \dot{y}(t) + \mu(t) \frac{d}{dt}V(y(t)) + \gamma(t) V(y(t)) = 0
\end{align}
with Cauchy data $y(t_0) := Y_0$ and $\dot{y}(t_0) := X_0$, which has recently been investigated in \cite{attouch-bot-hulett-nguyen--heavy-ball} from the viewpoint of asymptotic properties. The trajectory solutions of \eqref{eq:sysmon} are understood in the strong global sense, meaning that the mapping $t \mapsto V(y(t))$ is differentiable almost everywhere, and the equation holds almost everywhere. The dynamical system \eqref{eq:sysmon} can be seen as a second-order regularization that improves the convergence behavior of
\begin{align}\label{eq:attouch-svaiter}
        \dot{y}(t) + \mu(t) \frac{d}{dt}V(y(t)) + \mu(t) V(y(t)) = 0
\end{align}
with Cauchy data $y(t_0) = Y_0$ , introduced by Attouch and Svaiter in \cite{attouch-svaiter}. For this first-order system, it holds that $\|V(y(t))\| \to 0$ as $t \to +\infty$, and the trajectory $y(t)$ converges weakly to a zero of $V$.

Consider the first-order dynamical system, which directly mirrors the gradient flow approach,
\begin{align}\label{eq:gradient-flow-operator}
    \dot{y}(t) = V(y(t)).
\end{align}
This system does not provide the appropriate model for approaching the solution set of \eqref{eq:mon} when the operator 
$V$ is monotone and Lipschitz continuous. Indeed, in this case only the ergodic trajectory $t \mapsto \frac{1}{t-t_0}\int_{t_0}^{t} y(s)ds$ converges to a zero of $V$ as $t\rightarrow +\infty$ (see~\cite{baillon-brezis}.) In general, the trajectory $y(t)$ itself does not converge, as exemplified by the counterclockwise rotation operator by $\frac{\pi}{2}$ radians in $\R^2$ (see ~\cite{cominetti-peypouquet-sorin}). Convergence of the trajectory $y(t)$
arises only when $V$ is cocoercive; that is, there exists $\rho>0$ such that
\begin{align*}
    \langle V(y)-V(x), y-x\rangle \geq \rho \|V(y)-V(x)\|^2 \ \mbox{for every} \ x,y\in\mathcal{H}.
\end{align*}
In this case, $y(t)$ converges weakly to a zero of $V$, while the residual $\|V(y(t))\|$ decays to zero with rate $o\left(\frac{1}{\sqrt{t}}\right)$ as $t \to +\infty$ (see \cite{attouch-bot-nguyen}).

By applying the same time–rescaling technique as in the convex minimization setting, we connect \eqref{eq:SHBF-op} with the following dynamics on $[s_0, +\infty)$, where $s_0 > 0$ and $\alpha >2$ and $\beta >0$,
\begin{align}\label{eq:SFOGDA}\tag{SFOGDA}
    \begin{cases}
        dZ(s) = Q(s) ds\\
        dV(Z(s)) = K(s) ds\\
        dQ(s) = \left(-\frac{\alpha}{s}Q(s) - \beta K(s) - \frac{\alpha \beta}{2s} V(Z(s))\right) + \sigma_Z(s) dW(s).
    \end{cases}
\end{align}
We then establish the asymptotic behavior of \eqref{eq:SFOGDA} by transferring properties from the original system. In particular, we show that $\|V(Z(s))\| = o\left(\frac{1}{s}\right)$ and  $\langle Z(s) - z^*, V(z(s)) \rangle = o\left(\frac{1}{s}\right)$ almost surely, $\mathbb{E}(\|V(Z(s))\|) = \mathcal{O}\left(\frac{1}{s}\right)$ and $\mathbb{E}(\langle Z(s) - z^*, V(Z(s)) \rangle) = \mathcal{O}\left(\frac{1}{s}\right)$, for every $z^* \in \zer V$, in addition to the almost sure weak convergence of $Z(s)$ to a zero of $V$ as $s \to +\infty$.

Equation \eqref{eq:SFOGDA} can be seen as a stochastic counterpart of the Fast Optimistic Gradient Descent Ascent (OGDA) system 
\begin{align}\label{eq:fogda}
            \ddot{z}(s) + \frac{\alpha}{s}\dot{z}(s) + \beta \frac{d}{ds}V(z(s)) + \frac{\alpha \beta}{2s} V(z(s)) = 0,
\end{align}
recently introduced by Bo\cb{t}, Csetnek and Nguyen in~\cite{bot-csetnek-nguyen} in connection with solving monotone equations \eqref{eq:mon}. Beyond the weak convergence of trajectories to a zero of $V$, this system achieves convergence rates of order $o\left(\frac{1}{s}\right)$ for both the residual $\|V(z(s))\|$ and the gap function $\langle z(s) - z^*, V(z(s)) \rangle$, for every $z^* \in \zer V$, as $s \to +\infty$. The Fast OGDA algorithm, obtained via an explicit discretization of \eqref{eq:fogda}, inherits these asymptotic guarantees. Remarkably, both the continuous- and discrete-time formulations of Fast OGDA currently yield the best known convergence rates for monotone equations in the literature.

It is worth noting that in~\cite{bot-s} we have investigated the following stochastic dynamical system (in a slightly more general form) on $[0,+\infty)$ -- a stochastic counterpart of \eqref{eq:attouch-svaiter} in the finite-dimensional setting
\begin{align}\label{eq:botschindler}
        d\big(Y(t) + \mu(t)V(Y(t))\big) = -(\gamma(t)-\dot{\mu}(t))V(Y(t))dt + \sigma_Y(t)dW(t),
\end{align}
where $\mu : [0, +\infty) \to (0,+\infty)$ and $\gamma : [0, +\infty) \to (0,+\infty)$ are bounded away from zero, with $\mu$ continuous differentiable and nonincreasing,  and $\gamma$ integrable. Assuming the diffusion term is square integrable, we have established ergodic convergence rates in expectation for both the norm square of the residual and the gap function. We emphasize, however, that the asymptotic behavior of \eqref{eq:fogda} significantly outperforms that of \eqref{eq:botschindler}.

\section{The case of convex minimization problem}\label{sec:convex-opt}

In this section we will investigate the stochastic Heavy Ball dynamics \eqref{eq:SHBF} 
\begin{align*}
    \begin{cases}
    dY(t) = X(t) dt
    \\ dX(t) = (-\lambda X(t) - b(t) \nabla f(Y(t))) dt + \sigma_Y(t) dW(t)
    \\ Y(t_0) = Y_0, \ \mbox{} \ X(t_0) = X_0
    \end{cases}
\end{align*}
in relation to the convex minimization problem \eqref{eq:opt}.

\subsection{Long-time analysis of \eqref{eq:SHBF}}

We begin by establishing the existence and uniqueness of trajectory solutions to \eqref{eq:SHBF}, drawing on results from the theory of stochastic differential equations. The relevant theorems and definitions are collected in the appendix.
\begin{thm}
If $\|\sigma_Y(t)\|_{HS}\leq\sigma_*$ for every $t\geq t_0$, then \eqref{eq:SHBF} has a unique strong solution $(Y,X)$. In addition, $(Y,X) \in S^{\nu}_{\cal H \times \cal H}$ for every $\nu \geq 2$.
\end{thm}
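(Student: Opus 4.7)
The plan is to recast \eqref{eq:SHBF} as a first–order SDE on the product Hilbert space $\mathcal{H} \times \mathcal{H}$ and then invoke a standard existence/uniqueness theorem for SDEs with Lipschitz and linear-growth coefficients (as presumably collected in the appendix). Set $U(t) = (Y(t), X(t))$, and define
\begin{align*}
F : [t_0, +\infty) \times \mathcal{H} \times \mathcal{H} &\to \mathcal{H} \times \mathcal{H}, \quad F(t, y, x) := \bigl( x,\, -\lambda x - b(t)\nabla f(y) \bigr), \\
\Sigma : [t_0, +\infty) &\to \mathcal{L}(\mathcal{H}, \mathcal{H} \times \mathcal{H}), \quad \Sigma(t) w := (0, \sigma_Y(t) w).
\end{align*}
Then \eqref{eq:SHBF} rewrites as $dU(t) = F(t, U(t))\,dt + \Sigma(t)\,dW(t)$ with $U(t_0) = (Y_0, X_0)$.

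Next I would verify the hypotheses of the SDE existence theorem. Using $L_{\nabla f}$-Lipschitz continuity of $\nabla f$, for any $(y_1, x_1), (y_2, x_2) \in \mathcal{H} \times \mathcal{H}$ and $t \geq t_0$,
\begin{align*}
\|F(t, y_1, x_1) - F(t, y_2, x_2)\|^2 \leq (1 + 2\lambda^2)\|x_1 - x_2\|^2 + 2\, b(t)^2 L_{\nabla f}^2\, \|y_1 - y_2\|^2,
\end{align*}
and the linear growth bound
\begin{align*}
\|F(t, y, x)\|^2 \leq (1 + 2\lambda^2)\|x\|^2 + 4\, b(t)^2 L_{\nabla f}^2\, \|y\|^2 + 4\, b(t)^2 \|\nabla f(0)\|^2
\end{align*}
follows similarly. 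Since $b$ is continuous, both constants are locally bounded in $t$, so on every compact subinterval $[t_0, T]$ the drift $F$ is uniformly Lipschitz and has linear growth in $(y,x)$. The diffusion $\Sigma$ satisfies $\|\Sigma(t)\|_{HS} = \|\sigma_Y(t)\|_{HS} \leq \sigma_*$ by hypothesis. Applying the standard existence/uniqueness result from the appendix on each $[t_0, T]$ yields a strong solution; uniqueness lets us paste these together into a unique global strong solution on $[t_0, +\infty)$.

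For the claim $(Y, X) \in S^{\nu}_{\mathcal{H} \times \mathcal{H}}$ for every $\nu \geq 2$, I would fix $T > t_0$ and apply It\^o's formula to $\|U(t)\|^{\nu}$ (or first to $\|U(t)\|^2$ to reduce to $\nu = 2$ and then bootstrap). The drift contribution is controlled by the linear growth estimate above, and the stochastic integral is handled by the Burkholder-Davis-Gundy inequality together with the bound $\|\Sigma(t)\|_{HS} \leq \sigma_*$; this produces an inequality of the form
\begin{align*}
\mathbb{E}\Bigl[\sup_{t_0 \leq s \leq t} \|U(s)\|^{\nu}\Bigr] \leq C_1(T, \nu) + C_2(T, \nu) \int_{t_0}^{t} \mathbb{E}\Bigl[\sup_{t_0 \leq r \leq s} \|U(r)\|^{\nu}\Bigr] ds,
\end{align*}
to which Gr\"onwall's lemma applies, yielding finiteness of $\mathbb{E}[\sup_{t_0 \leq t \leq T}\|U(t)\|^{\nu}]$. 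Since $T$ is arbitrary, this shows $(Y, X) \in S^{\nu}_{\mathcal{H} \times \mathcal{H}}$.

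The main obstacle I anticipate is purely technical: because the drift coefficient depends on $t$ through $b(t)$, the Lipschitz and growth constants are only locally (not globally) bounded in time, so one has to apply the existence theorem on compact intervals and then extend by uniqueness. A minor care point in the moment estimate is the standard localization argument (stopping times $\tau_n = \inf\{t : \|U(t)\| \geq n\}$) needed before taking expectations, in order to justify dropping the local martingale term; this is routine but should be mentioned for rigor.
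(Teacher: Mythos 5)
Your proposal is correct and follows essentially the same route as the paper: rewrite \eqref{eq:SHBF} as a first--order SDE on $\mathcal{H}\times\mathcal{H}$, verify the linear-growth and Lipschitz conditions on each compact interval $[t_0,T]$ using the $L_{\nabla f}$-Lipschitz continuity of $\nabla f$ and the boundedness of $b$ on compacts, apply the appendix existence/uniqueness theorem, and extend by uniqueness. The only cosmetic differences are that you anchor the growth bound at $\nabla f(0)$ rather than at a minimizer $y^*$ (where $\nabla f(y^*)=0$), and that for the $S^{\nu}_{\mathcal{H}\times\mathcal{H}}$ membership you sketch the standard It\^{o}--Burkholder--Davis--Gundy--Gr\"{o}nwall argument, whereas the paper simply invokes part (ii) of the same appendix theorem, which delivers this conclusion directly from the global-in-space Lipschitz bound you already established.
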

\begin{proof}
Let $t_0 \leq T$. First, we prove that condition $(1)$ in Theorem~\ref{thm:existence-uniqueness-solutions} (i) is satisfied. Let $Y,X \in C([t_0, T], \mathcal{H})$, $t_0 \leq t \leq T$ and $\omega\in\Omega$ be fixed.  For $y^* \in \argmin f$, it holds
    \begin{align*}
        & \ \left\|\begin{pmatrix}
            X(\omega, t)
            \\ -\lambda X(\omega, t) - b(t)\nabla f(Y(\omega, t))
        \end{pmatrix}\right\| + \|\sigma_Y(t)\|_{HS}\\
= & \ \sqrt{\|X(\omega, t)\|^2 + \|\lambda X(\omega, t) + b(t)\nabla f(Y(\omega, t))\|^2} + \|\sigma_Y(t)\|_{HS}\\
\leq & \ \sqrt{\|X(\omega, t)\|^2(1+2\lambda^2) + 2b(t)^2 L_{\nabla f}^2\|Y(\omega, t)-y^*\|^2} + \|\sigma_Y(t)\|_{HS}\\
\leq & \ \sqrt{\|X(\omega,t)\|^2(1+2\lambda^2) + 4b(t)^2 L_{\nabla f}^2\|Y(\omega,t)\| + 4b(t)^2 L_{\nabla f}^2\|y^*\|^2} + \|\sigma_Y(t)\|_{HS}
\\ \leq & \ \left(\max\left(\sqrt{1+2\lambda^2}, 2 L_{\nabla f} \sup_{t_0\leq t\leq T} b(t), 2 L_{\nabla f} \|y^*\| \sup_{t_0\leq t\leq T} b(t)\right) + \sigma_* \right)\left( 1+\sup_{t_0\leq s\leq T} \left\| \begin{pmatrix}
            X(\omega, s)
            \\ Y(\omega, s)
        \end{pmatrix}\right\|\right).
    \end{align*}
Condition $(2)$ in Theorem~\ref{thm:existence-uniqueness-solutions} (i) is also satisfied. Indeed, let $Y_1,X_1,Y_2,X_2\in C([t_0,T],\mathcal{H}), t_0\leq t\leq T$ and $\omega\in\Omega$ be fixed. It holds
    \begin{align*}
        & \ \left\| \begin{pmatrix}
            X_1(\omega, t)
            \\ - \lambda X_1(\omega, t) - b(t)\nabla f(Y_1(\omega, t))
        \end{pmatrix} - \begin{pmatrix}
            X_2(\omega,t)
            \\ -\lambda X_2(\omega,t) - b(t) \nabla f(Y_2(\omega,t))
        \end{pmatrix}\right\| + \| \sigma_Y(t) - \sigma_Y(t)\|_{HS}\\
        = & \ \sqrt{\| X_1(\omega,t)-X_2(\omega,t)\|^2 + \|\lambda(X_1(\omega,t)-X_2(\omega,t)) + b(t)(\nabla f(Y_1(\omega,t)) - \nabla f(Y_2(\omega,t)))\|^2}\\
        \leq & \ \sqrt{(1+2\lambda^2)\|X_1(\omega,t)-X_2(\omega,t)\|^2 + 2b(t)^2 L_{\nabla f}^2 \|Y_1(\omega,t)-Y_2(\omega,t)\|^2}\\
        = & \ \max\left(\sqrt{1+2\lambda^2}, \sqrt{2}L_{\nabla f} \sup_{t_0\leq t\leq T} b(t)\right) \max_{t_0 \leq s \leq T}\left\| \begin{pmatrix}
            X_1(\omega,s)
            \\Y_1(\omega,s)
        \end{pmatrix} - \begin{pmatrix}
            X_2(\omega,s)
            \\Y_2(\omega,s)
        \end{pmatrix}\right\|.
    \end{align*}
According to Theorem~\ref{thm:existence-uniqueness-solutions} (i), \eqref{eq:SHBF} has a unique strong solution on 
$[t_0,T]$. Since $t_0 \leq T$ has been arbitrarily chosen, the existence of a unique solution of \eqref{eq:SHBF} on $[t_0, +\infty)$ follows by standard extension arguments. 

By using the same estimates as in the verification of condition $(2)$, one easily sees, by using Theorem~\ref{thm:existence-uniqueness-solutions} (ii), that for every $\nu \geq 2$  the solution $(Y,X)$ of \eqref{eq:SHBF} lies in  $S^{\nu}_{\cal H \times \cal H}[t_0,T]$ for every $t_0 \leq T$, and therefore in $S^{\nu}_{\cal H \times \cal H}$.
\end{proof}

For the long-time analysis of the trajectory solution of \eqref{eq:SHBF}, we shall assume that
\begin{align}\label{eq:assumption-b}
    \sup_{t\geq t_0} \tfrac{\dot{b}(t)}{b(t)} < \lambda.
\end{align}

\begin{thm}\label{thm:convergence-shbf}
Let $\sigma_Y$ be square integrable, and let $(Y,X) \in S^{2}_{\cal H \times \cal H}$ be a strong solution of~\eqref{eq:SHBF}. Then, the following statements are true:
    \begin{enumerate}[(i)]
        \item $f(Y(t)) - \inf_\mathcal{H} f = o\left(\tfrac{1}{b(t)}\right)$ and $\|X(t)\| = o(1)$ as $t\rightarrow +\infty$ \mbox{a.s.};
        \item $\mathbb{E} \left(f(Y(t)) - \inf_\mathcal{H} f\right)  = \mathcal{O}\left(\tfrac{1}{b(t)}\right)$ and $\mathbb{E}\left(\|X(t)\|^2\right) = \mathcal{O}\left(1\right)$ as $t\rightarrow +\infty$;
        \item $Y(t)$ converges weakly to a minimizer of $f$ as $t\rightarrow +\infty$ a.s.
    \end{enumerate}
\end{thm}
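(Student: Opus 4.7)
The plan is to build a Lyapunov/energy functional, compute its stochastic differential via Itô's formula, tune parameters so that the resulting drift is dominated (up to an integrable diffusion correction and a true martingale) by nonpositive terms, and then invoke a stochastic Robbins--Siegmund type convergence theorem.

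Concretely, fix $y^*\in\argmin f$ and, mirroring the deterministic analysis of \eqref{HBtimescaling} in \cite{attouch-bot-hulett-nguyen--heavy-ball}, introduce an anchored energy of the form
\[
\mathcal{E}(t) = b(t)\bigl(f(Y(t)) - \inf_{\mathcal H} f\bigr) + \tfrac{1}{2}\|X(t) + \theta(Y(t)-y^*)\|^2 + \tfrac{\theta(\lambda-\theta)}{2}\|Y(t)-y^*\|^2,
\]
where $\theta>0$ will be calibrated. Since $Y$ has no martingale part, Itô's formula concentrates its corrections on the $\|X\|^2$-contribution and yields
\[
d\mathcal{E}(t) = \mathcal{D}(t)\,dt + \tfrac{1}{2}\|\sigma_Y(t)\|_{HS}^2\,dt + \bigl\langle X(t)+\theta(Y(t)-y^*),\,\sigma_Y(t)\,dW(t)\bigr\rangle.
\]
Using convexity in the form $f(Y)-\inf f \leq \langle \nabla f(Y), Y-y^*\rangle$ and picking $\theta \in (\sup_{t\geq t_0} \dot b/b,\;\lambda)$, which is possible by~\eqref{eq:assumption-b}, the drift can be bounded above by
\[
\mathcal{D}(t) \leq -\bigl(\theta\,b(t) - \dot b(t)\bigr)\bigl(f(Y(t))-\inf f\bigr) - (\lambda-\theta)\|X(t)\|^2 \leq 0.
\]

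Hence the process $t \mapsto \mathcal{E}(t) - \tfrac{1}{2}\int_{t_0}^{t}\|\sigma_Y\|_{HS}^2 - \int_{t_0}^{t}\langle X+\theta(Y-y^*), \sigma_Y\,dW\rangle$ is nonincreasing, and the square-integrability of $\sigma_Y$ together with the Burkholder--Davis--Gundy inequality controls the stochastic integral. A Robbins--Siegmund almost-supermartingale convergence theorem then yields simultaneously that $\mathcal{E}(t)$ converges a.s.\ to an a.s.-finite limit and is bounded in $L^1$, and that $\int_{t_0}^{\infty} b(s)(f(Y(s))-\inf f)\,ds$ and $\int_{t_0}^{\infty}\|X(s)\|^2\,ds$ are a.s.\ finite. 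The uniform $L^1$ bound delivers the $\mathcal{O}$-rates in (ii). For (iii), a.s.\ convergence of $\|Y(t)-y^*\|^2$ (extracted from the convergence of $\mathcal{E}(t)$ once the contributions $b(f-f^*)$ and $\|X\|^2$ are shown to vanish via (i)) is the first ingredient of Opial's lemma, and $f(Y(t))\to\inf f$ a.s.\ together with the weak lower semicontinuity of $f$ supplies the second.

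The principal obstacle is upgrading the $\mathcal{O}$-rate into the $o$-rate asserted in (i): a.s.\ integrability of $\int b(s)(f(Y(s))-\inf f)\,ds$ alone only yields $\liminf_{t\to\infty} b(t)(f(Y(t))-\inf f) = 0$. To promote $\liminf$ to $\lim$, I would re-apply Itô's formula to a reinforced energy $\widetilde{\mathcal{E}}(t)$ weighted by a function growing slightly faster than $b$ (the stochastic counterpart of the second-energy trick from \cite{attouch-chbani-peypoquet-riahi}), and combine the already-proved a.s.\ integrability with the moment estimates on the martingale part guaranteed by $\sigma_Y\in L^2_{HS}$, so that $\widetilde{\mathcal{E}}(t)$ likewise converges a.s.\ and its limit forces $b(t)(f(Y(t))-\inf f)\to 0$. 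The companion claim $\|X(t)\|\to 0$ a.s.\ follows by the analogous second-energy argument applied to $\tfrac{1}{2}\|X(t)\|^2$, using the Itô expansion of its evolution and the a.s.\ finiteness of $\int \|X\|^2\,ds$.
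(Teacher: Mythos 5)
Your setup — the anchored energy, the It\^{o} computation, the calibration $\theta\in\bigl(\sup_{t\ge t_0}\dot b/b,\lambda\bigr)$ via \eqref{eq:assumption-b}, the supermartingale-convergence step yielding both the a.s.\ finite limit of the energy, the $L^1$ bound for (ii), and the a.s.\ integrability of $\int b(s)(f(Y(s))-\inf_{\mathcal H}f)\,ds$ and $\int\|X(s)\|^2\,ds$ — coincides with the paper's proof (which uses Theorem~\ref{thm:A.9-in-paper} in place of Robbins--Siegmund; these are interchangeable here). The genuine gap is in your route from the $\liminf$ to the $o$-rate in (i). You propose a ``second energy'' weighted by a function growing strictly faster than $b$. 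But the It\^{o} correction of such a weighted energy contains a term of the form $\frac{w(t)}{2b(t)}\|\sigma_Y(t)\|_{HS}^2\,dt$ (the second $x$-derivative of the $\frac{1}{2b(t)}\|x\|^2$ part picks up the weight), and if $w/b$ is unbounded this term is \emph{not} integrable under the standing assumption that $\sigma_Y$ is merely square integrable. This is precisely why the paper's Theorem~\ref{thm:convergence-rates-shbf}, which implements exactly this weighted-energy idea, must impose the additional hypothesis $\int_{t_0}^{+\infty}\frac{1}{b(s)}\bigl(\int_{(s+t_0)/2}^{s}b(r)\,dr\bigr)\|\sigma_Y(s)\|_{HS}^2\,ds<+\infty$. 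As written, your argument for (i) therefore proves the theorem only under a stronger integrability condition than the one in the statement. The paper avoids this entirely by a two-parameter comparison: running the same energy with two admissible values $\eta_1\neq\eta_2$, observing that $\mathcal{E}_{\eta_1}-\mathcal{E}_{\eta_2}=(\eta_1-\eta_2)p(t)$ with $p(t)=\frac{\lambda}{2}\|Y(t)-y^*\|^2+\langle Y(t)-y^*,X(t)\rangle$, deducing that $p(t)$ converges a.s., hence that $h(t)=b(t)(f(Y(t))-\inf_{\mathcal H}f)+\frac12\|X(t)\|^2=\mathcal{E}_\eta(t,Y(t),X(t))-\eta p(t)$ converges a.s., and finally concluding $h(t)\to 0$ from $\int_{t_0}^{+\infty}h(t)\,dt<+\infty$. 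No second It\^{o} computation, and no extra hypothesis, is needed. (A direct application of Theorem~\ref{thm:A.9-in-paper} to $h$ itself, using $\dot b(f-\inf_{\mathcal H}f)\le\eta\, b(f-\inf_{\mathcal H}f)$ as the integrable increasing part, would also work without strengthening the assumptions.)

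A secondary omission concerns (iii): the exceptional null set on which $\lim_{t\to+\infty}\|Y(t)-y^*\|$ exists depends on $y^*$, whereas Opial's lemma requires a single full-measure event on which the limit exists for \emph{every} $y^*\in\argmin f$. The paper handles this with the countable-dense-subset argument enabled by the separability of $\mathcal{H}$; your sketch does not address it. Your reduction of the convergence of $\|Y(t)-y^*\|^2$ from the convergence of $\mathcal{E}$ together with (i) is otherwise sound.
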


\begin{proof}
Choose $y^* \in \argmin f$ and $\eta$ such that
    \begin{align*}
        \sup_{t\geq t_0} \frac{\dot{b}(t)}{b(t)} < \eta < \lambda.
    \end{align*}
Consider the energy function
    \begin{align*}
        \mathcal{E}_\eta(t,y,x) := b(t)\left(f(y) - \inf\nolimits_{\mathcal{H}} f\right) + \frac{1}{2} \|\eta(y-y^*) + x\|^2 + \frac{1}{2}\eta(\lambda-\eta)\|y-y^*\|^2.
    \end{align*}
We have
    \begin{align*}
        \frac{d}{dt} \mathcal{E}_\eta(t,y,x) & = \dot{b}(t)(f(y) - \inf\nolimits_\mathcal{H} f)
        \\ \frac{d}{dy}\mathcal{E}_\eta(t,y,x) & = b(t)\nabla f(y) + \eta(\eta(y-y^*)+x) + \eta(\lambda-\eta)(y-y^*)
        \\ \frac{d}{dx} \mathcal{E}_\eta(t,y,x) & = \eta(y-y^*) + x
        \\ \frac{d^2}{dx^2}\mathcal{E}_\eta(t,y,x) & = \Id,
    \end{align*}
so, by It\^{o}'s formula,
    \begin{align*}
        & \ d\mathcal{E}_\eta(t,Y(t),X(t)) \\
        = & \ \Big(\dot{b}(t)(f(Y(t)) - \inf\nolimits_\mathcal{H} f) + \langle b(t)\nabla f(Y(t)) + \eta^2(Y(t)-y^*) + \eta X(t) + \eta(\lambda - \eta)(Y(t)-y^*), X(t)\rangle\\
        &\quad + \langle \eta(Y(t)-y^*) + X(t), -\lambda X(t)-b(t)\nabla f(Y(t)) \rangle + \trace\left(\sigma_Y(t)\sigma_Y(t)^*\right)\Big) dt\\
        & \  + \langle \eta(Y(t)-y^*) + X(t), \sigma_Y(t) dW(t)\rangle\\
    = & \ \Big(\dot{b}(t)(f(Y(t)) - \inf\nolimits_\mathcal{H} f) + \langle b(t)\nabla f(Y(t)) + \eta X(t) + \eta\lambda(Y(t)-y^*), X(t)\rangle -\eta\lambda \langle Y(t)-y^*, X(t)\rangle\\
        &\quad - \eta b(t)\langle Y(t)-y^*, \nabla f(Y(t))\rangle - \lambda\|X(t)\|^2 - b(t) \langle X(t), \nabla f(Y(t))\rangle + \|\sigma_Y(t)\|^2_{HS}\Big) dt\\
        & \ + \langle \eta(Y(t)-y^*) + X(t), \sigma_Y(t) dW(t)\rangle\\
    = & \ \left(\dot{b}(t)(f(Y(t)) - \inf\nolimits_\mathcal{H} f) - \eta b(t)\langle \nabla f(Y(t)), Y(t)-y^*\rangle + (\eta-\lambda)\|X(t)\|^2 \right)dt\\
        & \ + \|\sigma_Y(t)\|^2_{HS} dt + \langle \eta(Y(t)-y^*) + X(t), \sigma_Y(t) dW(t)\rangle \quad \forall t \geq t_0.
    \end{align*}
By using the convexity of $f$, for every $s \geq t_0$, we have
    \begin{align*}
       u(s):= & \ \dot{b}(s)(f(Y(s)) - \inf\nolimits_\mathcal{H} f) - \eta b(s) \langle \nabla f(Y(s)), Y(s)-y^*\rangle \\
       \leq & \ (\dot{b}(s) - \eta b(s))(f(Y(s)) - \inf\nolimits_\mathcal{H} f) \leq 0.
    \end{align*}
Thus, for every $t \geq t_0$
    \begin{align}\label{eq:ito-formula-energy-optimization}
        \mathcal{E}_\eta(t,Y(t),X(t)) = & \ \mathcal{E}_\eta(t_0,Y(t_0),X(t_0)) \nonumber \\
        & + \int_{t_0}^t \left(\dot{b}(s)(f(Y(s)) - \inf\nolimits_\mathcal{H} f) - \eta b(s) \langle \nabla f(Y(s)), Y(s)-y^*\rangle + (\eta - \lambda)\|X(s)\|^2 \right) ds \nonumber \\
        & + \int_{t_0}^t \|\sigma_Y(s)\|^2_{HS} ds + \int_{t_0}^t \langle \eta(Y(s)-y^*) + X(s), \sigma_Y(s) dW(s)\rangle\\
        = & \ \mathcal{E}_\eta(t_0,Y(t_0),X(t_0)) - \underbrace{\int_{t_0}^t (\lambda-\eta)\|X(s)\|^2 - u(s) ds}_{=:U(t)} \nonumber\\
        & \ + \underbrace{\int_{t_0}^t \|\sigma_Y(s)\|^2_{HS} ds}_{=:A(t)} + \underbrace{\int_{t_0}^t \langle \eta(Y(s)-y^*) + X(s), \sigma_Y(s) dW(s)\rangle}_{=:N(t)},\nonumber
    \end{align}
    where $A(\cdot)$ and $U(\cdot)$ are increasing processes fulfilling $U(t_0)=0=A(t_0)$ a.s. According to  Proposition~\ref{prop:ito-formula}, $N(\cdot)$ is a martingale, since $(Y,X) \in S^{2}_{\cal H \times \cal H}$ and for every $t \geq t_0$
    \begin{align*}
        & \ \mathbb{E}\left( \int_{t_0}^t \|\sigma_Y^*(s)(\eta(Y(s)-y^*) + X(s))\|^2 ds \right)\\
        \leq & \ \mathbb{E}\left(\int_{t_0}^t 3\|\sigma_Y(s)\|^2 \left(\eta^2 (\|Y(s)\|^2+\|y^*\|^2) + \|X(s)\|^2 \right) ds\right)\\
        \leq & \ 3\mathbb{E}\left(\left(\eta^2 \sup_{s\in[t_0,t]} \|Y(s)\|^2 + \sup_{s\in[t_0,t]} \|X(s)\|^2 + \eta^2\|y^*\|^2\right)\right) \int_{t_0}^t \|\sigma_Y(s)\|^2 ds <+\infty.
    \end{align*}    
From the square-integrability of $\sigma_Y$, we obtain the existence of $\lim_{t\rightarrow +\infty} A(t) <+\infty$. Therefore, by Theorem \ref{thm:A.9-in-paper}, the limits $\lim_{t\rightarrow +\infty} \mathcal{E}_\eta(t,Y(t),X(t))$ and $\lim_{t\rightarrow +\infty} U(t)$ exist and are finite a.s. 

Taking the expectation in~\eqref{eq:ito-formula-energy-optimization}, we obtain, for every $t\geq t_0$, that
    \begin{align*}
        \mathbb{E}\left(\mathcal{E}_\eta(t,Y(t),X(t)) \right) &\leq \mathbb{E}(\mathcal{E}_\eta(t_0,Y(t_0),X(t_0))) + \mathbb{E}\left(\int_{t_0}^{t} \|\sigma_Y(s)\|^2_{HS} ds\right)
        \\&\leq \mathbb{E}(\mathcal{E}_\eta(t_0,Y(t_0),X(t_0)) + \mathbb{E}\left(\int_{t_0}^{+\infty} \|\sigma_Y(s)\|^2_{HS}ds\right) < +\infty,
    \end{align*}
which means, in particular, that
    \begin{align*}
\sup_{t \geq t_0} \mathbb{E}\left(\|Y(t)-y^*\|^2\right) <+\infty,
    \end{align*}
    \begin{align*}
\sup_{t \geq t_0}\mathbb{E}\left(b(t)\left(f(Y(t)) - \inf\nolimits_\mathcal{H} f\right)\right) < +\infty, \quad  \mbox{and} \quad \sup_{t \geq t_0} \mathbb{E}\left(\|\eta(Y(t)-y^*) + X(t)\|^2\right) <+\infty.
    \end{align*}
Since, for every $t\geq t_0$,
    \begin{align*}
       \|X(t)\|^2  \leq 2 \|\eta(Y(t) - y^*) + X(t)\|^2 + 2 \eta^2 \|Y(t)-y^*\|^2,
    \end{align*}
    we have that
    \begin{align*}
\sup_{t \geq t_0} \mathbb{E}\left(\|X(t)\|^2\right) <+\infty.
    \end{align*}
Therefore, 
    \begin{align*}
       \sup_{t \geq t_0} \mathbb{E}\left(b(t)\left(f(Y(t)) - \inf\nolimits_\mathcal{H} f\right) + \|X(t)\|^2\right) < +\infty,
    \end{align*}
in other words,
\begin{align*}
        \mathbb{E}\Big(b(t)\left(f(Y(t)) - \inf\nolimits_\mathcal{H} f\right) + \|X(t)\|^2\Big) = \mathcal{O}\left(1\right) \ \mbox{as} \ t \to +\infty,
\end{align*}
thus proving $(ii)$.

Further, it follows from
    \begin{align*}
        \lim_{t\rightarrow +\infty} U(t) <+\infty \ \mbox{a.s.}
    \end{align*}
that
    \begin{align*}
    \int_{t_0}^{+\infty} \|X(t)\|^2 dt < +\infty \quad \mbox{and} \quad  \int_{t_0}^{+\infty} (-u(t)) dt < +\infty \ \mbox{a.s.}
    \end{align*}
From here, since
    \begin{align*}
\int_{t_0}^{+\infty}  b(t)\left(\eta - \sup_{t\geq t_0}\frac{\dot{b}(t)}{b(t)}\right) (f(Y(t)) - \inf\nolimits_\mathcal{H} f) dt   \leq & \  \int_{t_0}^{+\infty} (\eta b(t) - \dot{b}(t))(f(Y(t)) - \inf\nolimits_\mathcal{H} f) dt\\
\leq & \ \int_{t_0}^{+\infty} (-u(t)) < +\infty \ \mbox{a.s.},
    \end{align*}
it follows
    \begin{align*}
        \int_{t_0}^{+\infty} b(t)(f(Y(t)) - \inf\nolimits_\mathcal{H} f) dt < +\infty \ \mbox{a.s.}
    \end{align*}
    
Now we choose $\eta_1, \eta_2 \in \left( \sup_{t\geq t_0} \frac{\dot{b}(t)}{b(t)},  \lambda \right), \eta_1 \neq \eta_2$. For every $t \geq t_0$ it holds
    \begin{align*}
        \mathcal{E}_{\eta_1}(t,Y(t),X(t)) - \mathcal{E}_{\eta_2}(t,Y(t),X(t)) &= \frac{1}{2}\lambda(\eta_1-\eta_2)\|Y(t)-y^*\|^2 + (\eta_1-\eta_2)\langle Y(t)-y^*, X(t)\rangle
        \\&= (\eta_1-\eta_2) \left(\frac{\lambda}{2} \|Y(t)-y^*\|^2 + \langle Y(t)-y^*, X(t)\rangle\right).
    \end{align*}
Denoting
    \begin{align*}
        p(t) := \frac{\lambda}{2} \|Y(t)-y^*\|^2 + \langle Y(t)-y^*, X(t)\rangle,
    \end{align*}
 it follows from the existence of $\lim_{t\rightarrow +\infty}\mathcal{E}_{\eta_1}(t,Y(t),X(t)) - \mathcal{E}_{\eta_2}(t,Y(t),X(t))$ a.s. that also $\lim_{t\rightarrow +\infty} p(t) < +\infty$ must exist a.s. Observe that for every $t \geq t_0$ 
    \begin{align*}
        \mathcal{E}_\eta(t,Y(t),X(t)) = & \ b(t)(f(Y(t)) - \inf\nolimits_\mathcal{H} f) + \frac{1}{2}\|\eta(Y(t)-y^*) + X(t)\|^2 + \frac{1}{2}\eta(\lambda-\eta)\|Y(t)-y^*\|^2\\
        = & \ b(t)(f(Y(t))-\inf\nolimits_\mathcal{H} f) + \frac{1}{2}\|X(t)\|^2 + \eta p(t).
    \end{align*}
    Denoting
    \begin{align*}
        h(t) := b(t)(f(Y(t)) - \inf\nolimits_\mathcal{H} f) + \frac{1}{2}\|X(t)\|^2,
    \end{align*}
it yields that the limit
    \begin{align*}
        \lim_{t\rightarrow +\infty} h(t) < +\infty \ \mbox{exists a.s.}
    \end{align*}
Since
    \begin{align*}
        \int_{t_0}^{+\infty} h(t)dt = \int_{t_0}^{+\infty} b(t)(f(Y(t))-\inf\nolimits_\mathcal{H} f) dt + \frac{1}{2}\int_{t_0}^{+\infty} \|X(t)\|^2 dt < +\infty,
    \end{align*}
we can conclude that
    \begin{align*}
        \lim_{t\rightarrow +\infty} h(t) = 0 \ \mbox{a.s.}
    \end{align*}
Thus,
    \begin{align*}
        f(Y(t)) - \inf\nolimits_\mathcal{H} f = o\left(\tfrac{1}{b(t)}\right)  \ \mbox{and} \ \lim_{t\rightarrow +\infty} \|X(t)\| = 0 \ \mbox{a.s.},
    \end{align*}
which proves $(i)$.

Let us proceed to the proof of statement $(iii)$. For $y^* \in \argmin f$ fixed, we have
\begin{align*}
        d(\|Y(t)-y^*\|^2) = 2 \langle Y(t)-y^*, X(t) \rangle dt.
\end{align*}
Applying Lemma~\ref{lem:aux-convergence}, we deduce that the existence of $\lim_{t \to +\infty} p(t) < +\infty$  implies that $\lim_{t\rightarrow +\infty} \|Y(t)-y^*\|^2 < +\infty$ also exists. Hence, there exist subsets $\Omega_1,\Omega_{y*} \subseteq \Omega$ of full measure such that
    \begin{align*}
        f(Y(\omega,t)) - \inf\nolimits_\mathcal{H} f = o\left(\tfrac{1}{b(t)}\right) \ \mbox{} \forall \omega\in\Omega_1 \quad \mbox{and} \quad \lim_{t\rightarrow +\infty} \|Y(\omega,t) - y^*\|^2 < +\infty \ \mbox{} \forall \omega\in\Omega_{y^*}.
    \end{align*}
Before concluding our argument, we must establish the existence of a set of full measure, independent of $y^* \argmin f$, that can take the place of $\Omega_{y^*}$ in the second property. This can be attained using an argument from~\cite{soto-fadili-attouch, bot-s}, which relies on the separability assumption on $\mathcal{H}$. Since $\argmin f$ is closed, the separability of $\mathcal{H}$ ensures the existence of a countable set $S\subseteq \mathcal{H}$ that is dense in $\argmin f$. By the countability of $S$,
    \begin{align*}
        \mathbb{P}\left(\bigcap_{s\in S} \Omega_s\right) = 1 - \mathbb{P}\left(\bigcup_{s\in S} \Omega_s^c\right) \geq 1 - \sum_{s\in S} \mathbb{P}(\Omega_s^c) = 1.
    \end{align*}
    Then, $\mathbb{P}(\bigcap_{s\in S}\Omega_s \cap \Omega_1) = 1$. 
    
    Now let $\omega\in\bigcap_{s\in S} \Omega_s \cap \Omega_1$ be fixed and take $y^*\in\argmin f$. Then, there exists a sequence $(s^k)_{k\geq 0} \subseteq S$ such that $s^k \rightarrow y^*$ as $k\rightarrow +\infty$. For every $k\geq 0$, since $\omega \in \Omega_{s^k}$, it holds
    \begin{align*}
        \lim_{t\rightarrow +\infty} \|Y(\omega,t) - s^k\|< +\infty \ \mbox{exists.}
    \end{align*}
    Using the triangle inequality, we obtain
    \begin{align*}
        \Big| \|Y(\omega,t) - s^k\| - \|Y(\omega,t) - y^*\| \Big | \leq \|s^k - y^*\| \quad \forall k\geq 0 \ \forall t\geq 0.
    \end{align*}
    Hence, for every $k\geq 0$, it holds
    \begin{align*}
        -\|s^k - y^*\| + \lim_{t\rightarrow +\infty} \|Y(\omega,t) - s^k\| &\leq \liminf_{t\rightarrow +\infty} \|Y(\omega,t) - y^*\|
        \\& \leq \limsup_{t\rightarrow +\infty} \|Y(\omega,t) - y^*\| \leq \lim_{t\rightarrow +\infty} \|Y(\omega,t) - s^k\| + \|s^k - y^*\|.
    \end{align*}
Letting $k\rightarrow +\infty$, it follows
    \begin{align*}
        \lim_{t\rightarrow +\infty} \|Y(\omega,t) - y^*\| = \lim_{k\rightarrow+\infty} \lim_{t\rightarrow +\infty} \|Y(\omega,t) - s^k\| < +\infty.
    \end{align*}
Recall that there exists a set $\Omega_\text{cont}\subseteq \Omega$ of full measure such that $Y(\omega, \cdot)$ is continuous for every $\omega\in\Omega_\text{cont}$. Define $\Omega_\text{converge} := \Omega_1 \cap \Omega_\text{cont} \cap \left(\bigcap_{s\in S} \Omega_s \right)$, which is also a subset of full measure of $\Omega$. 

We will use Opial's Lemma (\cite{opial}) to prove the weak almost sure convergence of the trajectory process to a minimizer of $f$. To this end, we fix $\omega\in\Omega_\text{converge}$. From the above, for every $y^* \in \argmin f$, the limit  $\lim_{t\rightarrow +\infty} \|Y(\omega,t) - y^*\| < +\infty$ exists. This verifies the first condition of Opial's Lemma. Using the continuity of $Y(\omega, \cdot)$, we can also show that $Y(\omega, \cdot)$ is  bounded. If $\bar y(\omega)$ is a weak limit point of this trajectory process, then using (i) and the weak lower semicontinuity of $f$, we get that $f(\bar y(\omega)) = \inf_\mathcal{H} f$. Therefore, $\bar y(\omega) \in \argmin f$. This shows that the second condition of Opial's Lemma is also satisfied. In conclusion,  $Y(\omega, t)$ converges weakly to a minimizer of $f$ as $t \to +\infty$.
\end{proof}

\begin{rmk}{\rm
Note that in order to use It\^{o}'s formula in the above proof, we do need to require that, for every $t \geq t_0$, the mapping $(y,x) \mapsto \mathcal{E}_\eta(y,x)$ is twice differentiable in $y$. This is because the correspoding components in $\nabla_{(y,x)} \mathcal{E}_\eta(t,y,x)$ and $\nabla_{(y,x)}^2\mathcal{E}_\eta(t,y,x)$ are multiplied by $0$ in the integral representation of $\mathcal{E}_\eta(t,Y(t),X(t))$ -- in the proof of It\^{o}'s formula, the assumptions are only used for precisely the components that are not multiplied by $0$, so it is sufficient to have the requirements verified only on them.}
\end{rmk}

Depending on the choice for the parameter function $b$, these convergence rates may still leave room for improvement--an issue explored in the theorem below and in the remark immediately following the proof. 

\begin{thm}\label{thm:convergence-rates-shbf}
Assume $\sigma_Y$ is  square integrable and $\displaystyle \int_{t_0}^{+\infty} \frac{1}{b(s)} \left(\int_{\frac{s+t_0}{2}}^s b(r)dr \right) \|\sigma_Y(s)\|_{HS}^2 ds < +\infty$. If $(Y,X) \in S^{2}_{\cal H \times \cal H}$ is a strong solution of~\eqref{eq:SHBF}, then
  \begin{align*}
        f(Y(t)) - \inf\nolimits_\mathcal{H} f = \mathcal{O}\left(\frac{1}{\int_{\frac{t+t_0}{2}}^{t} b(s)ds}\right) \quad \mbox{and} \quad \|X(t)\| = \mathcal{O}\left(\sqrt{\frac{b(t)}{\int_{\frac{t+t_0}{2}}^{t} b(s)ds}}\right) \ \mbox{as} \ t\rightarrow +\infty \ \mbox{a.s.}
    \end{align*}
\end{thm}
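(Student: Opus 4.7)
The plan is to introduce a weighted Lyapunov function whose a.s.\ boundedness delivers both rates simultaneously. Writing $B(t) := \int_{(t+t_0)/2}^{t} b(s)\, ds$, I would define
\[
\secondenergy(t,y,x) := B(t)\bigl(f(y) - \inf\nolimits_\mathcal{H} f\bigr) + \tfrac{B(t)}{2\, b(t)}\,\|x\|^2.
\]
Both summands are non-negative, so once $\secondenergy(t,Y(t),X(t))$ is shown to be almost surely bounded, the bounds $f(Y(t)) - \inf_\mathcal{H} f = \mathcal{O}(1/B(t))$ and $\|X(t)\|^2 = \mathcal{O}(b(t)/B(t))$ a.s.\ follow immediately from reading off the two components.

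The first step is to apply It\^{o}'s formula to $\secondenergy(t,Y(t),X(t))$. The key structural cancellation is that $B(t)\langle\nabla f(Y(t)),X(t)\rangle$ coming from $\nabla_y\secondenergy$ against $dY = X\,dt$ is annihilated by $\tfrac{B(t)}{b(t)}\langle X(t),-b(t)\nabla f(Y(t))\rangle$ coming from $\nabla_x\secondenergy$ against the drift of $dX$, yielding
\[
d\secondenergy(t,Y(t),X(t)) = \Bigl[\dot B(t)\phi(t) + C(t)\|X(t)\|^2 + \tfrac{1}{2}\tfrac{B(t)}{b(t)}\|\sigma_Y(t)\|_{HS}^2\Bigr] dt + \tfrac{B(t)}{b(t)}\langle X(t),\sigma_Y(t)\,dW(t)\rangle,
\]
where $\phi(t) := f(Y(t))-\inf_\mathcal{H} f$ and $C(t) := \tfrac{d}{dt}\!\bigl(\tfrac{B(t)}{2b(t)}\bigr) - \lambda\tfrac{B(t)}{b(t)}$. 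Using $\dot B(t) = b(t) - \tfrac{1}{2}b((t+t_0)/2) \leq b(t)$ together with $\dot b \geq 0$, one checks that $\tfrac{d}{dt}(\tfrac{B}{2b}) \leq \tfrac{1}{2}$, so that $C(t)\|X\|^2 \leq \tfrac{1}{2}\|X\|^2 - \lambda\tfrac{B}{b}\|X\|^2$.

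Next I would move the non-positive part $-\lambda\tfrac{B}{b}\|X\|^2$ to the left-hand side and integrate from $t_0$ (where $\secondenergy$ vanishes since $B(t_0)=0$), obtaining
\[
\secondenergy(t,Y(t),X(t)) + \lambda\int_{t_0}^t \tfrac{B(s)}{b(s)}\|X(s)\|^2\,ds \;\leq\; A(t) + M(t),
\]
with $A(t) := \int_{t_0}^t b(s)\phi(s)\,ds + \tfrac{1}{2}\int_{t_0}^t \|X(s)\|^2\,ds + \tfrac{1}{2}\int_{t_0}^t \tfrac{B(s)}{b(s)}\|\sigma_Y(s)\|_{HS}^2\,ds$ and $M(t)$ the stochastic integral. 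By the integrability facts extracted in the proof of Theorem~\ref{thm:convergence-shbf} ($\int_{t_0}^{+\infty} b(s)\phi(s)\,ds<+\infty$ and $\int_{t_0}^{+\infty} \|X(s)\|^2\,ds<+\infty$ a.s.) combined with the standing hypothesis of the theorem, $A(t)$ is a non-decreasing adapted process with an a.s.\ finite limit. Applying Theorem~\ref{thm:A.9-in-paper} to the non-negative left-hand side then gives a.s.\ convergence (hence a.s.\ boundedness) of $\secondenergy(t,Y(t),X(t))$ and, as a bonus, the additional a.s.\ integrability $\int_{t_0}^{+\infty}\tfrac{B(s)}{b(s)}\|X(s)\|^2\,ds<+\infty$.

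The main obstacle is the application of Theorem~\ref{thm:A.9-in-paper} to the stochastic integral $M(t) = \int_{t_0}^t \tfrac{B(s)}{b(s)}\langle X(s),\sigma_Y(s)\,dW(s)\rangle$: since $B/b$ may grow like $t$, the naive quadratic-variation bound $\mathbb{E}\int_{t_0}^t (B/b)^2\|\sigma_Y^*X\|^2\,ds$ is not evidently finite, so $M$ must be treated as a local rather than true martingale. This will require a stopping-time localisation argument paralleling the one used for the martingale $N(\cdot)$ in the proof of Theorem~\ref{thm:convergence-shbf}, exploiting the a.s.\ boundedness of $\|X(s)\|$ already established there, before one can pass to the a.s.\ convergence conclusion on the original horizon.
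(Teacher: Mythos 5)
Your proposal is correct and follows essentially the same route as the paper: the Lyapunov function $B(t)\bigl(f(y)-\inf_{\mathcal H}f\bigr)+\tfrac{B(t)}{2b(t)}\|x\|^2$ is exactly the paper's $\widetilde{\Phi}(t,y,x)=g(t)\Phi(t,y,x)$, and the It\^{o} decomposition, the reuse of the integrability facts from Theorem~\ref{thm:convergence-shbf}, and the appeal to Theorem~\ref{thm:A.9-in-paper} all match. The only divergence is your final concern about the stochastic integral: no stopping-time localisation is needed, since $(Y,X)\in S^2_{\mathcal H\times\mathcal H}$ and the continuity of $g/b$ make the quadratic-variation bound finite on every compact interval $[t_0,t]$, so $N(\cdot)$ is a true martingale on each finite horizon and in particular a continuous local martingale, which is all Theorem~\ref{thm:A.9-in-paper} requires.
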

\begin{proof}
Define
\begin{align*}
    \Phi(t,y,x) := (f(y)-\inf\nolimits_\mathcal{H} f) + \frac{1}{2b(t)}\|x\|^2,
\end{align*}
and set
\begin{align*}
        \widetilde{\Phi}(t,y,x) := g(t)\Phi(t,y,x),
    \end{align*}
where
\begin{align*}
        g(t) := \int_{\frac{t+t_0}{2}}^t b(s)ds.
\end{align*}
    Observe that
    \begin{align*}
        &\frac{d}{dt} \widetilde{\Phi}(t,y,x) = -\frac{\dot{b}(t)}{2b(t)}\|x\|^2 g(t) + \left(b(t)-b\left(\frac{t+t_0}{2}\right)\right)\Phi(t,x,y)
        \\& \frac{d}{dy}\widetilde{\Phi}(t,y,x) = g(t) \nabla f(y)
        \\& \frac{d}{dx}\widetilde{\Phi}(t,y,x) = \frac{g(t)}{b(t)} x
        \\& \frac{d^2}{dx^2} \widetilde{\Phi}(t,y,x) = \frac{g(t)}{b(t)} \Id.
    \end{align*}
By It\^{o}'s formula,
    \begin{align*}
        & \ d\widetilde{\Phi}(t,Y(t),X(t))\\
        = & \ g(t) \left(-\frac{\dot{b}(t)}{2b(t)^2}\|X(t)\|^2 + \langle \nabla f(Y(t)), X(t) \rangle + \frac{1}{b(t)}\langle X(t), -\lambda X(t) - b(t)\nabla f(Y(t))\rangle\right) dt\\
        & + \left(b(t)-b\left(\frac{t+t_0}{2}\right)\right)\Phi(t,X(t),Y(t)) dt + \left\langle \frac{g(t)}{b(t)}X(t), \sigma_Y(t) dW(t)\right\rangle + \frac{g(t)}{2b(t)} \trace\left(\sigma_Y(t)\sigma_Y(t)^*\right) dt \\
        = & \ -\frac{g(t)}{b(t)} \left(\frac{\dot{b}(t)}{2b(t)} + \lambda\right) \|X(t)\|^2 dt - b\left(\frac{t+t_0}{2}\right)\Phi(t,X(t),Y(t))dt + b(t)\Phi(t,X(t),Y(t)) dt\\
        & \ + \frac{g(t)}{b(t)} \left\langle \sigma_Y(t)^* X(t), dW(t)\right\rangle + \frac{g(t)}{2b(t)}\trace\left(\sigma_Y(t)\sigma_Y(t)^*\right) dt \quad \forall t \geq t_0,
    \end{align*}
therefore,
    \begin{align*}
\widetilde{\Phi}(t,Y(t),X(t)) = & \ \widetilde{\Phi}(0,Y(0),X(0))\\
        & \ - \underbrace{\int_{t_0}^t \frac{g(s)}{b(s)}\left(\frac{\dot{b}(s)}{2b(s)} + \lambda\right)\|X(s)\|^2 + b\left(\frac{s+t_0}{2}\right)\Phi(s,X(s),Y(s)) ds}_{=:U(t)}\\
        & \ + \underbrace{\int_{t_0}^t \frac{g(s)}{2b(s)}\trace(\sigma_Y(s)\sigma_Y(s)^*) + b(s)\Phi(s,X(s),Y(s))ds}_{=:A(t)}\\
        & \ + \underbrace{\int_{t_0}^t \frac{g(s)}{b(s)} \left\langle \sigma_Y(s)^*X(s), dW(s)\right\rangle}_{=:N(t)} \quad \forall t \geq t_0.
    \end{align*}
 $A(\cdot)$ and $U(\cdot)$ are increasing processes fulfilling $U(t_0)=0=A(t_0)$ a.s. Since $(Y,X) \in S^{2}_{\cal H \times \cal H}$ and for every $t \geq t_0$
\begin{align*}
        \mathbb{E} \left(\int_{t_0}^t \left\|\frac{g(s)}{b(s)} \sigma_Y(s)^*X(s)\right\|^2 ds \right) \leq & \ \mathbb{E}\left( \int_{t_0}^T \left(\frac{g(s)}{b(s)}\right)^2 \|\sigma_Y(s)\|^2 \|X(s)\|^2 ds \right)\\
        \leq & \ \sup_{t_0\leq s \leq t}\left(\frac{g(s)}{b(s)}\right)^2\mathbb{E}\left(\sup_{t_0\leq s\leq t} \|X(s)\|^2\right)\int_{t_0}^t\|\sigma_Y(s)\|_{HS}^2 ds < +\infty,
    \end{align*}
taking into account Proposition~\ref{prop:ito-formula},  $N(\cdot)$ is a martingale. We have seen in the proof of Theorem~\ref{thm:convergence-shbf} that 
$\int_{t_0}^{+\infty} b(t)(f(Y(t))-\inf_\mathcal{H})dt <+\infty$ and 
$ \int_{t_0}^{+\infty} \|X(t)\|^2 dt <+\infty$, hence $\int_{t_0}^{+\infty}b(t)\Phi(t,Y(t),X(t)) dt <+\infty$. This, together with the second integrability assumption in the hypothesis, guarantees the existence of $\lim_{t\rightarrow +\infty} A(t)<+\infty$. Thus, from Theorem~\ref{thm:A.9-in-paper} we obtain that the limit $\lim_{t\rightarrow +\infty} \widetilde{\Phi}(t,X(t),Y(t))$ exists and is finite a.s. This leads to the desired convergence rates.
\end{proof}
\begin{rmk}\label{rmk:different-theorems-for-different-b}
{\rm
Depending on $b$ and under stronger integrability conditions on the diffusion term, the convergence rate results in Theorem~\ref{thm:convergence-rates-shbf} may offer an improvement over those established in Theorem~\ref{thm:convergence-shbf}.

(a) For $r >1$, let
    \begin{align*}
        b(t) := t^r \quad \forall t \geq t_0.
    \end{align*}
For every $s \geq t_0$ it holds
    \begin{align*}
        \frac{1}{b(s)} \int_{\frac{s+t_0}{2}}^{s} b(u)du \leq \frac{s}{r+1},
    \end{align*}
thus, assuming that
    \begin{align*}
        \int_{t_0}^{+\infty} s\|\sigma_Y(s)\|_{HS}^2 ds < +\infty,
    \end{align*}
both integrability conditions in Theorem~\ref{thm:convergence-rates-shbf} are satisfied. Under this assumption, the convergence rates improve to
    \begin{align*}
        f(Y(t)) - \inf\nolimits_\mathcal{H} f = \mathcal{O}\left(\tfrac{1}{t^{r+1}}\right)  \ \mbox{and} \ \|X(t)\| = \mathcal{O}\left(\tfrac{1}{\sqrt{t}}\right) \ \mbox{as} \ t \to +\infty \ \mbox{a.s.}
    \end{align*}
    
(b) As a second example, we consider for $r >1$ and $t_0 >0$
    \begin{align*}
        b(t) := t^r \log(t) \quad \forall t \geq t_0.
    \end{align*}
For every $s \geq t_0$, it holds
    \begin{align*}
        \frac{1}{b(s)}\int_{\frac{s+t_0}{2}}^{s} b(u) du = & \ \frac{1}{s^r\log(s)} \int_{\frac{s+t_0}{2}}^{s} u^{r} \log(u)du = \frac{1}{s^r\log(s)} \left(\frac{u^{r+1}}{r+1}\log(u)\bigg\vert_{\frac{s+t_0}{2}}^{s} - \int_{\frac{s+t_0}{2}}^{s} \frac{u^{r}}{r+1}du\right)\\
        = & \ \frac{1}{s^r\log(s)} \left(\frac{s^{r+1}}{r+1}\log(s) - \frac{(s+t_0)^{r+1}}{2^{r+1}(r+1)}\log\left(\frac{s+t_0}{2}\right) - \frac{s^{r+1}}{(r+1)^2}+\frac{(s+t_0)^{r+1}}{2^{r+1}(r+1)^2}\right)\\
        \leq & \ s \ \mbox{for} \ s \ \mbox{large enough.}
\end{align*}
This shows that if
\begin{align*}
        \int_{t_0}^{+\infty} s\|\sigma_Y(s)\|_{HS}^2 ds < +\infty,
\end{align*}
then both integrability conditions in Theorem~\ref{thm:convergence-rates-shbf} are satisfied. Under this assumption, the convergence rates improve to
    \begin{align*}
        f(Y(t)) - \inf\nolimits_\mathcal{H} f = \mathcal{O}\left(\tfrac{1}{t^{r+1}\log(t)}\right)  \ \mbox{and} \ \|X(t)\| = \mathcal{O}\left(\tfrac{1}{\sqrt{t}}\right) \ \mbox{as} \ t \to +\infty \ \mbox{a.s.}
    \end{align*}

(c) In contrast, if $b(t) = \exp(t)$, no improvement in the convergence rates can be achieved via Theorem~\ref{thm:convergence-rates-shbf}. Namely,
\begin{align*}
        f(Y(t)) - \inf\nolimits_\mathcal{H} f = \mathcal{O}\left(\tfrac{1}{\exp(t)}\right)  \ \mbox{and} \ \|X(t)\| = \mathcal{O}\left(1\right) \ \mbox{as} \ t \to +\infty \ \mbox{a.s.}
    \end{align*}
This is not surprising, since for every $s \geq t_0$ we have
$$\frac{1}{b(s)}\int_{\frac{s+t_0}{2}}^{s} b(u) du = 1- \exp\left(\frac{t_0-s}{2}\right),$$
and therefore the second integrability assumption does not strengthen the required square integrability of the diffusion term.}\end{rmk}

\subsection{Application to the stochastic Su-Boyd-Cand\`{e}s system with vanishing damping}

In the following, we show -- via time-scaling arguments -- that a particular formulation of the stochastic Heavy Ball system \eqref{eq:SHBF} is closely related to the stochastic Su–Boyd–Cand\`{e}s system with vanishing damping \eqref{eq:SAVD-alpha}. This connection allows us to derive a long-time statements for the latter, not through a technically involved energy-dissipativity argument, but rather by transferring the results of Theorem~\ref{thm:convergence-shbf}.

\begin{thm}\label{thm:equivalence-shbf-savd}
Let $\alpha>1$. For given $t_0\geq 0$ and $s_0 > 0$, consider the dynamical systems
    \begin{align}\label{eq:SHBF-sp}\tag{SHBFEXP}
        \begin{cases}
        dY(t) = X(t) dt
        \\ dX(t) = \left(-X(t) - \left(\frac{s_0}{\alpha-1}\right)^2\exp\left(\frac{2(t-t_0)}{\alpha-1}\right) \nabla f(Y(t))\right) dt + \sigma_Y(t) dW(t)
        \\ Y(t_0) = Y_0,  \ X(t_0) = X_0
        \end{cases}
    \end{align}
on $[t_0, +\infty)$,    and
    \begin{align*}\tag{SAVD}
        \begin{cases}
            dZ(s) = Q(s)ds
            \\ dQ(s) = \left(-\frac{\alpha}{s}Q(s) - \nabla f(Z(s))\right) ds + \sigma_{Z}(s) dW(s)
            \\ Z(s_0) = Z_0, \ Q(s_0) = Q_0 \ 
        \end{cases}
    \end{align*}
on $[s_0, +\infty)$. Then, the following statements are true:
    \begin{enumerate} [(i)]
        \item  Let $(Y,X)$ be a strong solution of ~\eqref{eq:SHBF-sp}. Then, for
        \begin{align*}
            \tau(s) := t_0 + (\alpha-1)\log\left(\frac{s}{s_0}\right) \quad \forall s \geq s_0,
        \end{align*}
        it holds that $(Z,Q)$, defined by
        \begin{align*}
            Z(s) := Y(\tau(s)) \quad \mbox{and} \quad  Q(s) := \frac{\alpha-1}{s}X(\tau(s)),
        \end{align*}
        is a strong solution of~\eqref{eq:SAVD-alpha} with initial conditions $Z(s_0) = Z_0 =  Y_0$ and $Q(s_0) = Q_0 = \frac{\alpha-1}{s_0}X_0$, and diffusion term
        \begin{align*}
            \sigma_Z(s) = \left(\dot{\tau}(s)\right)^\frac{3}{2}\sigma_Y(\tau(s)) = \left(\frac{\alpha-1}{s}\right)^\frac{3}{2}\sigma_Y\left(\tau(s)\right).
        \end{align*}
        \item Let $(Z,Q)$ be a strong solution of~\eqref{eq:SAVD-alpha}. Then, for
        \begin{align*}
            \kappa(t) := s_0\exp\left(\frac{t-t_0}{\alpha-1}\right) \quad \forall t \geq t_0,
        \end{align*}
        it holds that $(Y,X)$, defined by
        \begin{align*}
            Y(t) = Z(\kappa(t)) \quad \mbox{and} \quad X(t) = \frac{s_0}{\alpha-1}\exp\left(\frac{t-t_0}{\alpha-1}\right)Q(\kappa(t))
        \end{align*}
        is a strong solution of~\eqref{eq:SHBF-sp} with initial conditions $Y(t_0) Y_0 = Z_0$ and $X(t_0) = X_0 = \frac{s_0}{\alpha-1} Q_0$, and diffusion term
        \begin{align*}
            \sigma_Y(t) = \left(\dot{\kappa}(t)\right)^\frac{3}{2}\sigma_Z(\kappa(t)) = \left(\frac{s_0}{\alpha-1}\exp\left(\frac{t-t_0}{\alpha-1}\right)\right)^\frac{3}{2}\sigma_Z(\kappa(t)).
        \end{align*}
    \end{enumerate}
\end{thm}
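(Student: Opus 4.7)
\emph{Proof proposal.} The plan is to verify both (i) and (ii) by a direct It\^o computation combined with the standard time-change formula for stochastic integrals driven by an $\mathcal{H}$-valued Brownian motion. Since the two statements are mirror images of each other under the mutually inverse, deterministic changes of time $\tau$ and $\kappa$, I would spell out (i) in full and then reduce (ii) to the same calculation.

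The one non-trivial ingredient is the following time-change fact, which I would prove once at the beginning and afterwards invoke freely. If $\tau : [s_0,+\infty) \to [t_0,+\infty)$ is $C^1$, strictly increasing and satisfies $\tau(s_0)=t_0$, then the process
\[\tilde W(s) := \int_{t_0}^{\tau(s)} \bigl(\dot\tau(\tau^{-1}(u))\bigr)^{-1/2}\, dW(u)\]
is an $\mathcal{H}$-valued Brownian motion with respect to the time-shifted filtration $\widetilde{\mathcal{F}}_s := \mathcal{F}_{\tau(s)}$ (by L\'evy's characterization), and every It\^o integral transforms as
\[\int_{t_0}^{\tau(s)} \phi(u)\, dW(u) = \int_{s_0}^{s} \phi(\tau(r))\sqrt{\dot\tau(r)}\, d\tilde W(r).\]

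For part (i), with $\tau(s) = t_0 + (\alpha-1)\log(s/s_0)$ one has $\dot\tau(s) = (\alpha-1)/s$, and the first line of \eqref{eq:SHBF-sp} immediately gives $dZ(s) = X(\tau(s))\dot\tau(s)\,ds = Q(s)\,ds$. For the dynamics of $Q$ I would apply It\^o's product rule to $Q(s) = \frac{\alpha-1}{s}X(\tau(s))$ together with the time-changed SDE for $X$, using the two key identities
\[b(\tau(s)) = \left(\frac{s_0}{\alpha-1}\right)^{2}\exp\!\left(\frac{2(\tau(s)-t_0)}{\alpha-1}\right) = \frac{s^2}{(\alpha-1)^2}, \qquad X(\tau(s)) = \frac{s}{\alpha-1}Q(s).\]
The three drift terms $-\frac{\alpha-1}{s^2}X(\tau(s))$ (from the product rule), $-\frac{\alpha-1}{s}X(\tau(s))\dot\tau(s)$ (from the $-X(t)$ contribution), and $-\frac{\alpha-1}{s}b(\tau(s))\nabla f(Z(s))\dot\tau(s)$ (from the $-b(t)\nabla f(Y(t))$ contribution) then collapse, by routine cancellation, to $-\frac{\alpha}{s}Q(s) - \nabla f(Z(s))$, while the diffusion reduces to $\frac{\alpha-1}{s}\sqrt{\dot\tau(s)}\,\sigma_Y(\tau(s)) = (\dot\tau(s))^{3/2}\sigma_Y(\tau(s))$, which is exactly the advertised $\sigma_Z$. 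The initial conditions are read off from $\tau(s_0) = t_0$.

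Part (ii) is the same calculation run in reverse with $\kappa = \tau^{-1}$: here $\dot\kappa(t)/\kappa(t) = 1/(\alpha-1)$ and $\ddot\kappa(t) = \dot\kappa(t)/(\alpha-1)$, and the extra term $\ddot\kappa(t)Q(\kappa(t))$ arising from differentiating the prefactor $\dot\kappa(t)$ in $X(t) = \dot\kappa(t)Q(\kappa(t))$ is precisely what produces the linear friction ``$-X(t)$'' in \eqref{eq:SHBF-sp}, while the $-\frac{\alpha}{s}Q(s)$ drift from \eqref{eq:SAVD-alpha}, after multiplication by $\dot\kappa(t)^2$, accounts for the discrepancy. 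As indicated above, the only genuinely delicate step in either direction is the time-change of the It\^o integral; once that lemma is set up, the remainder is a purely algebraic verification.
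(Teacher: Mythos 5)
Your proposal is correct and follows essentially the same route as the paper: compute $dZ$ from the first line, apply the product rule to $Q(s)=\dot\tau(s)X(\tau(s))$, use the time-change formula for the stochastic integral (which the paper simply cites from \O ksendal rather than rederiving via L\'evy's characterization as you propose), and check the algebraic identities $\frac{\ddot\tau(s)}{\dot\tau(s)}-\dot\tau(s)=-\frac{\alpha}{s}$ and $b(\tau(s))\dot\tau(s)^2=1$; part (ii) is then the mirror computation with $\kappa=\tau^{-1}$. The only item the paper adds that you leave implicit is the (trivial) remark that conditions (i) and (ii) of the definition of a strong solution carry over to the time-changed process.
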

\begin{proof}
(i) Let $(Y,X)$ be a strong solution of ~\eqref{eq:SHBF-sp}. We denote
\begin{align*}
    b(t) := \left(\frac{s_0}{\alpha-1}\right)^2\exp\left(\frac{2(t-t_0)}{\alpha-1}\right).
\end{align*}
For $Z(s):=Y(\tau(s))$ and $ Q(s) := \frac{\alpha-1}{s}X(\tau(s))=\dot{\tau}(s) X(\tau(s))$, it yields, for every $s \geq s_0$, that
\begin{align*}
    dZ(s) = dY(\tau(s)) \dot{\tau}(s) ds = X(\tau(s))\dot{\tau}(s) ds = Q(s)ds,
\end{align*}
and
\begin{align*}
    dQ(s) &= \ddot{\tau}(s)X(\tau(s))ds + \dot{\tau}(s) d(X(\tau(s)))
    \\&= \ddot{\tau}(s) X(\tau(s)) ds + \dot{\tau}(s)^2\big(-X(\tau(s)) - b(\tau(s))\nabla f(Y(\tau(s)))\big)ds + \dot{\tau}(s)\sigma_Y(\tau(s)) dW(\tau(s))
    \\&= \ddot{\tau}(s) X(\tau(s)) + \dot{\tau}(s)^2\big(-X(\tau(s)) - b(\tau(s))\nabla f(Y(\tau(s)))\big)ds + \left(\dot{\tau}(s)\right)^\frac{3}{2}\sigma_Y(\tau(s)) dW(s)
    \\&= \left(\frac{\ddot{\tau}(s)}{\dot{\tau}(s)}Q(s) - \dot{\tau}(s)Q(s) - b(\tau(s)) \dot{\tau}(s)^2 \nabla f(Z(s))\right)ds + \left(\dot{\tau}(s)\right)^\frac{3}{2}\sigma_Y(\tau(s)) dW(s),
\end{align*}
where we used~\cite[Theorem 8.5.7]{oksendal} to justify the time transformation of the term integrated with respect to the Brownian motion. 

Since, for every $s \geq s_0$,
\begin{align*}
    \frac{\ddot{\tau}(s)}{\dot{\tau}(s)} - \dot{\tau}(s) =  -\frac{1}{s} - \frac{\alpha-1}{s} = -\frac{\alpha}{s},
\end{align*}
and
\begin{align*}
    b(\tau(s))(\dot{\tau}(s))^2 &= \left(\frac{\alpha-1}{ s}\right)^2\left(\frac{s_0}{\alpha-1}\right)^2\exp\left(\frac{-(\alpha-1) \log(s_0) + (\alpha-1)\log(s)}{\alpha-1}\right) = 1,
\end{align*}
we have
$$dQ(s) = \left(-\frac{\alpha}{s}Q(s) - \nabla f(Z(s)) \right) ds + \sigma_{Z}(s) dW(s).$$
Noting that conditions $(i)$ and $(ii)$ in the definition of a strong solution (see Definition \ref{def:strong-solution}) are trivially satisfied, we conclude that $(Z,Q)$ is a strong solution of \eqref{eq:SAVD-alpha}.

(ii) Conversely, let $(Z,Q)$ be a strong solution of~\eqref{eq:SAVD-alpha}. It is clear that $\tau:[s_0, +\infty) \to [t_0, +\infty)$ and $\kappa:[t_0, +\infty) \to [s_0, +\infty)$ are bijections and mutually inverse.

For $Y(t):=Z(\kappa(t))$ and $X(t) := \frac{s_0}{\alpha-1} \exp\left(\frac{t-t_0}{\alpha-1}\right)Q(\kappa(t)) =\dot{\kappa}(t)Q(\kappa(t))$, it yields, for every $t \geq t_0$, that
\begin{align*}
    dY(t) = dZ(\kappa(t)) \dot{\kappa}(t)dt= Q(\kappa(t)) \dot{\kappa}(t)dt = X(t) dt,
\end{align*}
and
\begin{align*}
 dX(t) &= \ddot{\kappa}(t)Q(\kappa(t)) + \dot{\kappa}(t) d(Q(\kappa(t)))
    \\
  &= \left(\ddot{\kappa}(t)Q(\kappa(t)) + \dot{\kappa}(t)^2\left(-\frac{\alpha}{\kappa(t)}Q(\kappa(t)) - \nabla f(Z(\kappa(t)))\right)\right)dt + \left(\dot{\kappa}(t)\right)^\frac{3}{2}\sigma_{Z}(\kappa(t)) dW(t)
    \\
    &= \left(\left(\ddot{\kappa}(t) - \frac{\alpha\dot{\kappa}(t)^2}{\kappa(t)}\right)\frac{1}{\dot{\kappa}(t)}X(t) - \dot{\kappa}(t)^2\nabla f(Y(t))\right)dt + \left(\dot{\kappa}(t)\right)^\frac{3}{2}\sigma_{Z}(\kappa(t)) dW(t)
    \\&=\left(\left(\frac{1}{\alpha-1} - \frac{\alpha}{\alpha-1}\right)X(t) - \dot{\kappa}(t)^2\nabla f(Y(t))\right) dt + \left(\dot{\kappa}(t)\right)^\frac{3}{2}\sigma_{Z}(\kappa(t)) dW(t)
    \\&= \left(- X(t) - \left(\frac{s_0}{\alpha-1}\right)^2\exp\left(\frac{2(t-t_0)}{\alpha-1}\right) \nabla f(Y(t))\right)dt + \sigma_Y(t) dW(t),
\end{align*}
since $\sigma_Y(t) = \left(\dot{\kappa}(t)\right)^\frac{3}{2}\sigma_Z(\kappa(t))$. The conditions $(i)$ and $(ii)$ in the definition of a strong solution (see Definition \ref{def:strong-solution}) are also in this case satisfied, therefore $(Y,X)$ is a strong solution of~\eqref{eq:SHBF-sp}.  
\end{proof}

We can now use the strong connection established in the above theorem to derive long-time statements for~\eqref{eq:SAVD-alpha}, by transferring those proved for~\eqref{eq:SHBF-sp} in Theorem \ref{thm:convergence-shbf}.

\begin{thm}\label{thm:convergence-savd}
    Let $\alpha>3$, let $s \mapsto s \sigma_Z(s)$ be square integrable, and let $(Z,Q) \in S^2_{\cal H \times \cal H}$ be a strong solution of~\eqref{eq:SAVD-alpha}. Then, the following statements are true:
 \begin{enumerate}[(i)]
        \item $f(Z(s)) - \inf_\mathcal{H} f = o\left(\tfrac{1}{s^2}\right)$ and $\|Q(s)\| = o\left(\tfrac{1}{s}\right)$ as $s\rightarrow +\infty$ \mbox{a.s.};
        \item $\mathbb{E} \left(f(Q(s)) - \inf_\mathcal{H} f\right)  = \mathcal{O}\left(\tfrac{1}{s^2}\right)$ and $\mathbb{E}\left(\|Q(s)\|^2\right) = \mathcal{O}\left(\tfrac{1}{s^2}\right)$ as $s\rightarrow +\infty$;
        \item $Z(s)$ converges weakly to a minimizer of $f$ as $s \rightarrow +\infty$ a.s.
    \end{enumerate}
\end{thm}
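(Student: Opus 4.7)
The plan is to obtain Theorem~\ref{thm:convergence-savd} by transferring the conclusions of Theorem~\ref{thm:convergence-shbf} from~\eqref{eq:SHBF-sp} to~\eqref{eq:SAVD-alpha} through the time change established in Theorem~\ref{thm:equivalence-shbf-savd}(ii). Concretely, given a strong solution $(Z,Q) \in S^2_{\cal H \times \cal H}$ of~\eqref{eq:SAVD-alpha}, I would define $(Y,X)$ on $[t_0,+\infty)$ by $Y(t) := Z(\kappa(t))$ and $X(t) := \dot{\kappa}(t)Q(\kappa(t))$, with $\kappa(t) = s_0\exp\!\bigl(\tfrac{t-t_0}{\alpha-1}\bigr)$. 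By Theorem~\ref{thm:equivalence-shbf-savd}(ii), $(Y,X)$ is a strong solution of~\eqref{eq:SHBF-sp}, so here $\lambda = 1$ and $b(t) = \bigl(\tfrac{s_0}{\alpha-1}\bigr)^2 \exp\!\bigl(\tfrac{2(t-t_0)}{\alpha-1}\bigr)$, which yields $\dot{b}(t)/b(t) = 2/(\alpha-1)$. The standing assumption $\alpha > 3$ guarantees $\sup_{t\geq t_0}\dot{b}(t)/b(t) < 1 = \lambda$, so condition~\eqref{eq:assumption-b} is in force.

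Next I would verify the hypotheses required to invoke Theorem~\ref{thm:convergence-shbf} on $(Y,X)$. First, square integrability of $\sigma_Y(t) = (\dot{\kappa}(t))^{3/2}\sigma_Z(\kappa(t))$ follows from the substitution $s = \kappa(t)$:
\begin{align*}
\int_{t_0}^{+\infty} \|\sigma_Y(t)\|_{HS}^2\,dt \;=\; \int_{t_0}^{+\infty} (\dot{\kappa}(t))^3 \|\sigma_Z(\kappa(t))\|_{HS}^2\,dt \;=\; \int_{s_0}^{+\infty}\Bigl(\frac{s}{\alpha-1}\Bigr)^2 \|\sigma_Z(s)\|_{HS}^2\,ds,
\end{align*}
which is finite exactly because $s\mapsto s\sigma_Z(s)$ is square integrable by hypothesis. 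Second, that $(Y,X) \in S^2_{\cal H \times \cal H}$ follows by bounding the pathwise suprema on any compact $[t_0,T]$ by the corresponding suprema of $(Z,Q)$ on $[s_0,\kappa(T)]$, using that $\dot{\kappa}$ is continuous, and then taking expectation.

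With Theorem~\ref{thm:convergence-shbf} applicable to $(Y,X)$, I would translate back using the direct computation
\begin{align*}
b(\tau(s)) \;=\; \Bigl(\frac{s_0}{\alpha-1}\Bigr)^2 \exp\!\Bigl(2\log\frac{s}{s_0}\Bigr) \;=\; \frac{s^2}{(\alpha-1)^2},
\end{align*}
where $\tau = \kappa^{-1}$. Then $f(Z(s)) - \inf_\mathcal{H} f = f(Y(\tau(s))) - \inf_\mathcal{H} f = o(1/b(\tau(s))) = o(1/s^2)$ a.s., and $\|Q(s)\| = \tfrac{\alpha-1}{s}\|X(\tau(s))\| = o(1/s)$ a.s. since $\|X(t)\| = o(1)$ a.s.; this proves (i). Taking expectations in Theorem~\ref{thm:convergence-shbf}(ii) and using $\mathbb{E}(\|Q(s)\|^2) = \bigl(\tfrac{\alpha-1}{s}\bigr)^2 \mathbb{E}(\|X(\tau(s))\|^2)$ gives (ii). For (iii), the weak a.s. convergence $Y(t) \rightharpoonup y^* \in \argmin f$ as $t \to +\infty$ passes through the reparametrization since $\tau(s) \to +\infty$ as $s \to +\infty$, yielding $Z(s) = Y(\tau(s)) \rightharpoonup y^*$ a.s.

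The main obstacle I anticipate is the bookkeeping required to justify that $(Y,X)$ really satisfies the integrability conditions of the ambient framework ($(Y,X) \in S^2_{\cal H \times \cal H}$ and square integrability of $\sigma_Y$) purely from the corresponding conditions on $(Z,Q)$ and $\sigma_Z$; once this is in place, everything else is a clean substitution via the explicit form of $\kappa$ and $\tau$.
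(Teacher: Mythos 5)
Your proposal is correct and follows essentially the same route as the paper: transfer via the time change $\kappa$ of Theorem~\ref{thm:equivalence-shbf-savd}(ii), verify condition~\eqref{eq:assumption-b} (which is exactly $\alpha>3$) and the square integrability of $\sigma_Y(t)=(\dot{\kappa}(t))^{3/2}\sigma_Z(\kappa(t))$ by the substitution $s=\kappa(t)$, then pull back the conclusions of Theorem~\ref{thm:convergence-shbf} using $b(\tau(s))=s^2/(\alpha-1)^2$ and $\|Q(s)\|=\tfrac{\alpha-1}{s}\|X(\tau(s))\|$. No gaps.
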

\begin{proof}
Let $(Z,Q) \in S^2_{\cal H \times \cal H}$ be a strong solution of~\eqref{eq:SAVD-alpha} and $t_0 \geq 0$.  Consistently with Theorem~\ref{thm:equivalence-shbf-savd}, we define, for every $t \geq t_0$,
\begin{align*}
        Y(t) := Z(\kappa(t)) \ \mbox{and} \ X(t) := \frac{\kappa(t)}{\alpha-1}Q(\kappa(t)),
\end{align*}
where $\kappa : [t_0, +\infty) \to [s_0, +\infty), \kappa(t) := s_0\exp\left(\frac{t-t_0}{\alpha-1}\right)$. It holds $\lim_{t \to +\infty} \kappa(t) = +\infty$. Then, $(Y,X) \in S^2_{\cal H \times \cal H}$ is a strong solution of~\eqref{eq:SHBF} for 
 \begin{align*}
       \lambda =1, \quad b(t) = \left(\frac{s_0}{\alpha-1}\right)^2\exp\left(\frac{2(t-t_0)}{\alpha-1}\right), \quad \mbox{and} \quad \sigma_Y(t) = \left(\dot{\kappa}(t)\right)^\frac{3}{2}\sigma_Z(\kappa(t)) \quad \forall t \geq t_0.
    \end{align*}
    
Since
\begin{align*}
        \dot{b}(t) = \frac{2 s_0^2}{(\alpha-1)^3}\exp\left(\frac{2(t-t_0)}{\alpha-1}\right) \quad \forall t \geq t_0.
    \end{align*}
assumption~\eqref{eq:assumption-b}, which reads
    \begin{align*}
        \sup_{t\geq t_0}\frac{\dot{b}(t)}{b(t)} < \lambda,
    \end{align*}
is equivalent to
\begin{align*}
        \frac{2}{\alpha-1} < 1 \Leftrightarrow \alpha >3.
\end{align*}
This explains the more restrictive assumption $\alpha>3$, which is in fact consistent with the one used in the deterministic setting for the Su–Boyd–Cand\`{e}s dynamical system.

Furthermore,  we observe that
    \begin{align*}
        \int_{t_0}^{+\infty} \|\sigma_Y(t)\|_{HS}^2 dt &= \int_{t_0}^{+\infty} \dot{\kappa}(t)^3\|\sigma_Z(\kappa(t))\|_{HS}^2 dt \stackrel{t=\tau(s)}{=} \int_{s_0}^{+\infty} \dot{\kappa}(\tau(s))^3\dot{\tau}(s)\|\sigma_Z(s)\|_{HS}^2 ds
        \\&= \int_{s_0}^{+\infty} \dot{\kappa}(\tau(s))^2\dot{(\kappa \circ \tau)}(s) \|\sigma_Z(s)\|_{HS}^2 ds = \int_{s_0}^{+\infty} \frac{s^2}{\alpha-1}\|\sigma_Z(s)\|_{HS}^2 ds <+\infty,
    \end{align*}
where $\tau = \kappa^{-1} : [s_0, +\infty) \to [t_0, +\infty), \tau(s):= t_0 + (\alpha-1)\log\left(\frac{s}{s_0}\right)$. It holds $\lim_{s \to +\infty} \tau(s) = +\infty$. In conclusion, all hypotheses of Theorem~\ref{thm:convergence-shbf} are satisfied.

We notice that, for every $s \geq s_0$,
\begin{align*}
        b(\tau(s)) = \left(\frac{s_0}{\alpha-1}\right)^2\exp\left(\frac{2(\tau(s)-t_0)}{\alpha-1}\right) = \left(\frac{s_0}{\alpha-1}\right)^2\exp\left(\frac{2(\alpha-1)\log\left(\frac{s}{s_0}\right)}{\alpha-1}\right) = \frac{1}{(\alpha-1)^2} s^2.
\end{align*}
and 
\begin{align*}
        \|Q(s)\| = \frac{\alpha-1}{\kappa(\tau(s))}\|X(\tau(s))\| = \frac{\alpha-1}{s}\|X(\tau(s))\|.
\end{align*}
    
From  Theorem~\ref{thm:convergence-shbf} (i), we have that $f(Y(\tau(s))) - \inf_\mathcal{H} f = o\left(\tfrac{1}{b(\tau(s))}\right)$ and $\|X(\tau(s))\| = o(1)$ as $s \rightarrow +\infty$ a.s. Therefore, $f(Z(s)) - \inf_\mathcal{H} f = o\left(\tfrac{1}{s^2}\right)$ and $\|Q(s)\| = o\left(\tfrac{1}{s}\right)$ as $s \rightarrow +\infty$ a.s. This proves (i). The two statements in (ii) follow analogously, by using Theorem~\ref{thm:convergence-shbf} (ii). Finally, by using Theorem~\ref{thm:convergence-shbf} (iii), we obtain that $Z(s) = Y(\tau(s))$ converges weakly to a minimizer of $f$ as $s \rightarrow +\infty$ a.s., therefore statement $(iii)$.
\end{proof}

\begin{rmk}{\rm
The technique used in the proof of the above theorem enabled us to recover the statements of \cite[Theorem 4.4]{mfao-stochasticintertialgradient} without resorting to a technically involved energy-dissipativity argument and without requiring the objective function $f$ to be twice continuously differentiable.
}
\end{rmk}

\section{The case of a monotone equation}\label{sec:operator}

In this section we will carry out a conceptually similar analysis for the stochastic Heavy Ball system \eqref{eq:SHBF-op} on $[t_0, +\infty)$
\begin{align*}
    \begin{cases}
        dY(t) = X(t)dt
        \\ dV(Y(t)) = H(t) dt
        \\ dX(t) = \big(-\lambda X(t) - \mu(t) H(t) - \gamma(t) V(Y(t))\big) dt + \sigma_Y(t) dW(t),
         \\ Y(t_0) = Y_0, \ \mbox{} \ X(t_0) = X_0,
    \end{cases}
\end{align*}
attached to the monotone equation \eqref{eq:mon}, where $X(\cdot)$, $Y(\cdot)$ and $V(Y(\cdot))$ are adapted stochastic processes with the property that there exists a continuous adapted process $H(\cdot)$ such that $dV(Y(t)) = H(t)dt$ for every $t \geq t_0$.

In the spirit of Definition \ref{def:strong-solution}, we consider as solutions of \eqref{eq:SHBF-op} pairs of trajectory processes $(Y,X) \in C([t_0, +\infty), {\cal H} \times {\cal H})$ such that, for every $t \geq t_0$,
\begin{align*}
    \begin{cases}
        Y(t) = Y_0 + \int_{t_0}^t X(s)ds\\ 
        V(Y(t)) = V(Y_0) + \int_{t_0}^t H(s)ds\\
        X(t) = X_0 + \int_{t_0}^t\big(-\lambda X(s) - \mu(s) H(s) - \gamma(s) V(Y(s))\big) ds + \int_{t_0}^t \sigma_Y(s) dW(s)
    \end{cases} \quad \mbox{almost surely.}
\end{align*}
The rather strong requirement that a continuous adapted process $H(\cdot)$ exists such that $dV(Y(t)) = H(t) dt$ for every $t \geq t_0$ can be met if $V$ is continuously differentiable, for example. Then, 
    \begin{align*}
      H(t) = \nabla V(Y(t)) X(t) \quad \forall t \geq t_0,
    \end{align*}
and its continuity is guaranteed in the case of a bounded diffusion term by Theorem \ref{thm:existence-uniqueness-solutions-shbf-op-alt}.

Since we do not assume that $V$ derives from a potential function, the convergence rate results will be formulated in terms of the residual $\|V(Y(t)\|$, the gap function $\langle Y(t)-y^*, V(Y(t))\rangle$, where $y^* \in \zer V$, and the velocity norm $\|X(t)\|$.

\subsection{Long-time analysis of \eqref{eq:SHBF-op}}

Firstly, we consider the existence and uniqueness of the trajectories defined by  \eqref{eq:SHBF-op}. For a trajectory solution $(Y,X)$ of \eqref{eq:SHBF-op}, defining, for every $t \geq t_0$,  
\begin{align*}
    Z(t) := X(t) + \mu(t)V(Y(t)).
\end{align*}
it yields
\begin{align*}
    dZ(t) = & \ \big (-\lambda X(t) - \mu(t)H(t) - \gamma(t)V(Y(t))\big)dt + \sigma_Y(t) dW(t) + \mu(t)H(t)dt + \dot{\mu}(t)V(Y(t)) dt\\
    = & \ \big(-\lambda Z(t) + (\lambda\mu(t) - \gamma(t) + \dot{\mu}(t))V(Y(t))\big)dt + \sigma_Y(t)dW(t).
\end{align*}
From the definition of $Z$ we have
\begin{align*}
    dY(t) = (Z(t) - \mu(t)V(Y(t))) dt \quad \forall t \geq t_0,
\end{align*}
which means that $(Y,Z)$ is a trajectory solution of the dynamical system on $[t_0, +\infty)$
\begin{align}\tag{SHBFOPALT}\label{eq:SHBF-op-alt}
    \begin{split}
        \begin{cases}
            dY(t) = (Z(t) - \mu(t) V(Y(t))) dt
            \\ dZ(t) = (-\lambda Z(t) + (\lambda\mu(t) - \gamma(t) + \dot{\mu}(t))V(Y(t)))dt + \sigma_Y(t) dW(t)
             \\ Y(t_0) = Y_0, \ \mbox{} \ Z(t_0) = X_0 + \mu(t_0)V(Y_0).
        \end{cases}
    \end{split}
\end{align}
Conversely, if $(Y,Z)$ is a trajectory solution of \eqref{eq:SHBF-op-alt} and there exists a continuous adaptive process $H(\cdot)$ such that
\begin{align*}
    dV(Y(t)) = H(t) dt \quad \forall t \geq t_0,
\end{align*}
then, by setting
\begin{align*}
    X(t) := Z(t) - \mu(t) V(Y(t)) \quad \forall t \geq t_0,
\end{align*}
we obtain that $(Y,X)$ is a trajectory solution of \eqref{eq:SHBF-op}. 

Using the Lipschitz continuity of $V$ and the continuity of $\mu$, we also have that, for every $\nu \geq 2$, $(Y,X)\in S_{\mathcal{H}\times\mathcal{H}}^\nu$ if and only if $(Y,Z)\in S_{\mathcal{H}\times\mathcal{H}}^\nu$.

In the following result we establish the existence and uniqueness of trajectory solutions of \eqref{eq:SHBF-op-alt}.

\begin{thm}\label{thm:existence-uniqueness-solutions-shbf-op-alt}
If $\|\sigma_Y(t)\|_{HS}\leq \sigma_*$ for every $t\geq t_0$, then ~\eqref{eq:SHBF-op-alt} has a unique strong solution $(Y,Z)$. In addition, $(Y,Z)\in S_{\mathcal{H}\times\mathcal{H}}^\nu$ for every $\nu \geq 2$.
\end{thm}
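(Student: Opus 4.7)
My approach is to mirror the existence/uniqueness proof carried out earlier for \eqref{eq:SHBF}: fix an arbitrary horizon $T \geq t_0$, verify that the drift and diffusion coefficients of \eqref{eq:SHBF-op-alt} satisfy the linear-growth and path-wise Lipschitz hypotheses of Theorem~\ref{thm:existence-uniqueness-solutions}(i) on $[t_0,T]$, conclude the existence of a unique strong solution on that interval, and then extend to $[t_0,+\infty)$ by the standard pasting argument. The claim $(Y,Z) \in S^{\nu}_{\mathcal{H}\times\mathcal{H}}$ for every $\nu \geq 2$ will follow from Theorem~\ref{thm:existence-uniqueness-solutions}(ii), re-using exactly the same Lipschitz estimates.

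\textbf{Key estimates.} Write the drift as $F(t,y,z) := \bigl(z - \mu(t) V(y), \, -\lambda z + (\lambda \mu(t) - \gamma(t) + \dot{\mu}(t)) V(y)\bigr)$. Since $\mu$ is continuously differentiable and $\gamma$ is continuous, the quantities $M_\mu := \sup_{t \in [t_0,T]} \mu(t)$, $M_{\dot\mu} := \sup_{t \in [t_0,T]} |\dot\mu(t)|$ and $M_\gamma := \sup_{t \in [t_0,T]} \gamma(t)$ are finite. Fixing $y^* \in \zer V$, the $L_V$-Lipschitz continuity of $V$ yields $\|V(y)\| \leq L_V \|y - y^*\| \leq L_V \|y\| + L_V \|y^*\|$. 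Bounding the two components of $F$ separately, using $\|a+b\|^2 \leq 2\|a\|^2 + 2\|b\|^2$ and taking square roots in the same fashion as in the proof for \eqref{eq:SHBF}, one obtains, uniformly in $t \in [t_0,T]$ and $\omega \in \Omega$, an estimate of the form
\begin{align*}
    \left\|F(t,y,z)\right\| + \|\sigma_Y(t)\|_{HS} \leq C_1\Bigl(1 + \sup_{t_0 \leq s \leq T}\|(y(s),z(s))\|\Bigr),
\end{align*}
with $C_1$ depending only on $\lambda, L_V, M_\mu, M_{\dot\mu}, M_\gamma, \|y^*\|$ and $\sigma_*$. The path-wise Lipschitz estimate for $F$ is obtained analogously, noting that $\|\sigma_Y(t) - \sigma_Y(t)\|_{HS} = 0$ trivially and that the $z$-component is affine while the $y$-dependence enters only through $V$. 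This yields
\begin{align*}
    \left\|F(t,y_1,z_1) - F(t,y_2,z_2)\right\| \leq C_2 \max_{t_0 \leq s \leq T}\left\|(y_1(s),z_1(s)) - (y_2(s),z_2(s))\right\|,
\end{align*}
with a constant $C_2$ depending on $\lambda, L_V, M_\mu, M_{\dot\mu}, M_\gamma$.

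\textbf{Conclusion and obstacles.} Both conditions of Theorem~\ref{thm:existence-uniqueness-solutions}(i) are then verified, providing a unique strong solution $(Y,Z)$ of \eqref{eq:SHBF-op-alt} on $[t_0,T]$; since $T \geq t_0$ is arbitrary, a standard concatenation argument produces a unique strong solution on $[t_0,+\infty)$. The same Lipschitz estimates, combined with Theorem~\ref{thm:existence-uniqueness-solutions}(ii), give $(Y,Z) \in S^{\nu}_{\mathcal{H}\times\mathcal{H}}[t_0,T]$ for every $\nu \geq 2$, and hence $(Y,Z) \in S^{\nu}_{\mathcal{H}\times\mathcal{H}}$. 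I do not anticipate any genuine obstacle: the argument is structurally identical to that of the first existence/uniqueness theorem, with the Lipschitz continuity of $\nabla f$ replaced by the Lipschitz continuity of $V$, and with $b(t)$ replaced by the finitely many coefficient functions $\mu(t), \dot\mu(t), \gamma(t)$, each of which is continuous and therefore bounded on every compact interval $[t_0,T]$. The only minor point deserving explicit verification is that $\dot\mu$ is indeed continuous on $[t_0,T]$, which is precisely the standing continuous differentiability assumption on $\mu$.
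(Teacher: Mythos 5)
Your proposal is correct and follows essentially the same route as the paper's own proof: verify the linear-growth and Lipschitz conditions of Theorem~\ref{thm:existence-uniqueness-solutions}(i) on each $[t_0,T]$ by bounding $\mu$, $\dot\mu$, $\gamma$ on the compact interval and using $\|V(y)\| \leq L_V\|y-y^*\|$ for $y^*\in\zer V$, then extend to $[t_0,+\infty)$ and invoke part (ii) for the $S^{\nu}_{\mathcal{H}\times\mathcal{H}}$ membership. The paper simply packages your constants into a single quantity $B = L_V\sup_{t_0\leq t\leq T}\sqrt{\mu(t)^2+(\lambda\mu(t)-\gamma(t)+\dot\mu(t))^2}$; there is no substantive difference.
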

\begin{proof}
Let $t_0 \geq T$. We denote
    \begin{align*}
B:=L_V \sup_{t_0\leq t\leq T}\sqrt{\mu(t)^2 + (\lambda\mu(t) - \gamma(t) + \dot{\mu}(t))^2}.
\end{align*}

For $Y, Z \in C([t_0,T], {\cal H})$, $t_0 \leq t \leq T$, $\omega \in \Omega$ and $y^* \in \zer V$, it holds 
    \begin{align*}
        & \ \left\|\begin{pmatrix}
            Z(\omega,t) - \mu(t) V(Y(\omega,t))
            \\ -\lambda Z(\omega,t) + (\lambda\mu(t) - \gamma(t) + \dot{\mu}(t))V(Y(\omega,t))
        \end{pmatrix}\right\| + \|\sigma_Y(t)\|_{HS} \\
        = & \ \sqrt{\|Z(\omega,t) - \mu(t)V(Y(\omega,t))\|^2 + \|-\lambda Z(\omega,t) + (\lambda\mu(t) - \gamma(t) + \dot{\mu}(t))V(Y(\omega,t))\|^2} + \|\sigma_Y(t)\|_{HS}\\
        \leq & \ \sqrt{2(1+\lambda^2)\|Z(\omega,t)\|^2 + 2L_V^2\Big(\mu(t)^2 + (\lambda\mu(t) - \gamma(t) + \dot{\mu}(t))^2\Big)\|Y(\omega,t) - y^*\|^2} + \|\sigma_Y(t)\|_{HS}\\
        \leq & \ \left(\max\left(\sqrt{2(1+\lambda^2)}, 4B, 4B\|y^*\| \right) + \sigma_* \right) \left( 1+ \sup_{t_0 \leq s \leq T} \left\| \begin{pmatrix}
            Y(\omega,s)
            \\Z(\omega,s)
        \end{pmatrix}\right\|\right).
    \end{align*}
which shows that condition $(1)$ in Theorem~\ref{thm:existence-uniqueness-solutions} (i) is satisfied.
    
Now, let  $Y_1,Y_2,Z_1,Z_2\in C([t_0,T],\mathcal{H})$, $t_0\leq t\leq T$ and $\omega\in\Omega$ be fixed. It holds
    \begin{align*}
        &\left\| \begin{pmatrix}
            Z_1(\omega,t) - \mu(t) V(Y_1(\omega,t))
            \\ -\lambda Z_1(\omega,t) + \big(\lambda \mu(t) - \gamma(t) + \dot{\mu}(t)\big)V(Y_1(\omega,t))
        \end{pmatrix} - \begin{pmatrix}
            Z_2(\omega,t) - \mu(t)V(Y_2(\omega,t))
            \\ -\lambda Z_2(\omega,t) + \big(\lambda \mu(t) - \gamma(t) + \dot{\mu}(t)\big) V(Y_2(\omega,t))
        \end{pmatrix} \right\| \\
        & + \|\sigma_Y(t) - \sigma_Y(t)\|_{HS} \\
        \leq & \ \sqrt{2(1+\lambda^2)\|Z_1(\omega,t)-Z_2(\omega,t)\|^2 + 2L_V^2(\mu(t)^2 + (\lambda \mu(t) - \gamma(t) + \dot{\mu}(t))^2)\|Y_1(\omega,t)-Y_2(\omega,t)\|^2}\\
        \leq & \ \max \left(\sqrt{2(1+\lambda^2)}, \sqrt{2}B \right)
        \left\|\begin{pmatrix}
            Y_1(\omega,t)
            \\ Z_1(\omega,t)
        \end{pmatrix} - \begin{pmatrix}
            Y_2(\omega,t)
            \\ Z_2(\omega,t)
        \end{pmatrix}\right\|.
    \end{align*}
Hence, condition $(2)$ in Theorem~\ref{thm:existence-uniqueness-solutions} (i) is also satisfied, thus ~\eqref{eq:SHBF-op-alt} has a unique strong solution on $[t_0,T]$. Standard extension arguments guarantee the existence of a unique solution of ~\eqref{eq:SHBF-op-alt} on $[t_0,+\infty)$.
    
Arguing as above, according to Theorem~\ref{thm:existence-uniqueness-solutions} (ii), we obtain that for every $\nu \geq 2$ the solution $(Y,Z)$ of~\eqref{eq:SHBF-op-alt} lies in $S_{\mathcal{H}\times\mathcal{H}}^\nu[t_0,T]$ for every $T\geq t_0$, and thus in $S_{\mathcal{H}\times\mathcal{H}}^\nu$.
\end{proof}

For the long-time analysis of the trajectory solution of \eqref{eq:SHBF-op}, we shall assume that
\begin{align}\label{eq:assumptions-convergence-operator}
   \exists \lim_{t\rightarrow +\infty} \frac{\gamma(t)}{\mu(t)} =: \ell >0, \quad \sup_{t\geq t_0} \frac{\dot{\mu}(t)}{\gamma(t)} < 1 \quad \mbox{and} \quad 2\lambda - 3\ell + \inf_{t\geq t_0} \frac{\dot{\mu}(t)}{\mu(t)}>0.
\end{align}

\begin{rmk}
{\rm Assumption~\eqref{eq:assumptions-convergence-operator} implies in particular that $\lambda >\ell$. Indeed, let $\delta >0$ be such that  $2\lambda - \delta \geq 3\ell - \inf_{t\geq t_0}\frac{\dot{\mu}(t)}{\mu(t)}$. Since $\lim_{t\rightarrow +\infty} \frac{\gamma(t)}{\mu(t)} = \ell$, there exists $t_1 \geq t_0$ such that for every $t \geq t_1$ it holds $\ell-\frac{\delta}{2} \leq \frac{\gamma(t)}{\mu(t)} \leq \ell + \frac{\delta}{2}$, and in particular $\frac{\dot{\mu}(t)}{\mu(t)} \leq (\ell+\tfrac{\delta}{2})\frac{\dot{\mu}(t)}{\gamma(t)} \leq \ell+\tfrac{\delta}{2}$. Consequently, for every $t \geq t_1$ it holds
\begin{align*}
        2\lambda- \delta \geq 3\ell - \inf_{t\geq t_0}\frac{\dot{\mu}(t)}{\mu(t)} \geq 3\ell - \frac{\dot{\mu}(t)}{\mu(t)} \geq 3\ell - \ell - \tfrac{\delta}{2} = 2\ell - \tfrac{\delta}{2},
    \end{align*}
which implies $2\lambda > 2\ell + \tfrac{\delta}{2} > 2\ell$, and thus $\lambda > \ell$.}
\end{rmk}

\begin{thm}\label{thm:o-convergence-rates-operator}
Let $\sigma_Y$ be square integrable, let $(Y,X) \in S^2_{{\cal H} \times {\cal H}}$ be a trajectory solution of~\eqref{eq:SHBF-op}, and let $y^*$ be a zero of $V$. Then, the following statements are true:
    \begin{enumerate}[(i)]
    \item $\|V(Y(t))\| = o\left(\frac{1}{\mu(t)}\right)$, $\langle Y(t) - y^*, V(Y(t))\rangle = o\left(\frac{1}{\mu(t)}\right)$, and $\|X(t)\| = o(1)$ as $t \to +\infty$ a.s.;
    \item $\mathbb{E}(\|V(Y(t))\|^2) = \mathcal{O}\left(\frac{1}{\mu(t)^2}\right)$, $\mathbb{E}(\langle Y(t) - y^*, V(Y(t))\rangle) = \mathcal{O}\left(\frac{1}{\mu(t)}\right)$, and $\mathbb{E}(\|X(t)\|^2)=\mathcal{O}(1)$ as $t \to +\infty$ a.s.;
    \item $Y(t)$ converges weakly to a zero of $V$ as $t \to +\infty$ a.s.
    \end{enumerate}
\end{thm}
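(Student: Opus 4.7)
The plan is to mirror the proof of Theorem~\ref{thm:convergence-shbf}, with the potential gap $f(y) - \inf_{\mathcal{H}} f$ replaced by the monotonicity gap $\langle V(y), y-y^*\rangle$ (which is nonnegative since $V(y^*) = 0$ and $V$ is monotone) and the weight $b(t)$ replaced by $\mu(t)$. Fixing $y^* \in \zer V$ and an auxiliary parameter $\eta$ lying in the nonempty admissible range dictated by~\eqref{eq:assumptions-convergence-operator} (the remark following the statement ensures $\lambda > \ell$, so such an $\eta$ exists), I would work with an energy function of the form
\begin{align*}
    \mathcal{E}_\eta(t,y,x) = \mu(t)\langle V(y), y - y^*\rangle + \tfrac{1}{2}\|\eta(y - y^*) + x + \mu(t)V(y)\|^2 + \tfrac{1}{2}\eta(\lambda - \eta)\|y - y^*\|^2,
\end{align*}
guided by the change of variables $Z = X + \mu(t)V(Y)$ in~\eqref{eq:SHBF-op-alt} and possibly adjusted by a lower-order correction in $\gamma(t) - \lambda\mu(t)$ to absorb the mismatch between the $\gamma(t)V(Y(t))$ contribution and the $\lambda\mu(t)V(Y(t))$ term that arises naturally from the substitution.

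Next, I would apply It\^{o}'s formula to $\mathcal{E}_\eta(t, Y(t), X(t))$. Since $Y(\cdot)$ is absolutely continuous, $V(Y(\cdot))$ has no quadratic variation and its differential reduces to $H(t)dt$, so the only It\^{o} correction comes from the $X$-component and contributes $\|\sigma_Y(t)\|_{HS}^2 dt$. After expanding the drifts and invoking monotonicity of $V$ together with the three inequalities in~\eqref{eq:assumptions-convergence-operator}, this should produce an identity of the shape
\begin{align*}
    d\mathcal{E}_\eta(t, Y(t), X(t)) = -\Delta(t)\,dt + \|\sigma_Y(t)\|_{HS}^2\,dt + dN(t),
\end{align*}
where $\Delta(t) \geq 0$ is a coercive combination of $\|X(t)\|^2$, $\mu(t)^2\|V(Y(t))\|^2$, and $\mu(t)\langle V(Y(t)), Y(t)-y^*\rangle$, and $N(\cdot)$ is a true martingale (checked using $(Y,X) \in S^2_{\mathcal{H}\times\mathcal{H}}$ and square integrability of $\sigma_Y$, exactly as in the optimization case). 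Theorem~\ref{thm:A.9-in-paper} then yields the a.s.\ existence of $\lim_{t\to+\infty}\mathcal{E}_\eta(t,Y(t),X(t))$ and the finiteness of $\int_{t_0}^{+\infty}\Delta(s)\,ds$; taking expectations delivers (ii), where $\mathbb{E}(\|V(Y(t))\|^2) = \mathcal{O}(1/\mu(t)^2)$ follows from $\mathbb{E}(\|X(t) + \mu(t)V(Y(t))\|^2) = \mathcal{O}(1)$ together with $\mathbb{E}(\|X(t)\|^2) = \mathcal{O}(1)$ via $\|\mu(t)V(Y(t))\|^2 \leq 2\|X(t)+\mu(t)V(Y(t))\|^2 + 2\|X(t)\|^2$.

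For (i), running the argument for two admissible values $\eta_1 \neq \eta_2$ and subtracting the two energies, as in the proof of Theorem~\ref{thm:convergence-shbf}, extracts the a.s.\ existence of $\lim_{t\to+\infty} p(t)$ for an auxiliary quantity $p(t)$ involving $\|Y(t)-y^*\|^2$ and $\langle Y(t)-y^*, X(t) + \mu(t)V(Y(t))\rangle$; combined with the a.s.\ existence of $\lim_{t\to+\infty}\mathcal{E}_\eta(t, Y(t), X(t))$, this forces the a.s.\ existence of $\lim_{t\to+\infty} h(t)$ for the nonnegative quantity $h(t) := \mu(t)\langle V(Y(t)), Y(t)-y^*\rangle + \tfrac{1}{2}\|X(t)+\mu(t)V(Y(t))\|^2$, which, paired with an a.s.\ integrability statement for $h$ (or a suitable companion) derived from $\int_{t_0}^{+\infty}\Delta(s)ds < +\infty$, forces $h(t) \to 0$ a.s.\ and produces all three $o$-rates. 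Statement (iii) then follows from the identity $d(\|Y(t)-y^*\|^2) = 2\langle Y(t)-y^*, X(t)\rangle\,dt$ (which has no stochastic part), Lemma~\ref{lem:aux-convergence} applied to $p(t)$ to extract $\lim_{t\to+\infty}\|Y(t)-y^*\|$, the separability-based argument of~\cite{soto-fadili-attouch, bot-s} to obtain a full-measure event independent of $y^* \in \zer V$, and Opial's lemma, where weak cluster points are identified as zeros of $V$ via the demiclosedness of the graph of the maximal monotone operator $V$ together with $\|V(Y(t))\| \to 0$ a.s. The main obstacle will be the precise construction of $\mathcal{E}_\eta$: identifying the correct correction term and verifying that the resulting quadratic form in $(\|X\|, \mu\|V(Y)\|, \sqrt{\mu\langle V(Y), Y-y^*\rangle})$ is nonnegative definite under the sharp assumption~\eqref{eq:assumptions-convergence-operator} is the delicate computational core of the argument.
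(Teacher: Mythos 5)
Your overall architecture matches the paper's proof exactly: a parameter-dependent Lyapunov energy, It\^{o}'s formula with the only second-order correction coming from the $X$-component, Theorem~\ref{thm:A.9-in-paper} to extract a.s.\ limits and integrability of the dissipation, expectations for (ii), the two-parameter subtraction trick to isolate $p(t)$ and a nonnegative $h(t)$ whose integrability forces $h(t)\to 0$, and then Lemma~\ref{lem:aux-convergence}, the separability argument, demiclosedness of the graph of $V$, and Opial's lemma for (iii). All of that is correct and is what the paper does.

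However, there is a genuine gap at the point you yourself flag as the ``delicate computational core'', and your concrete candidate energy would fail there. With $\mathcal{E}_\eta(t,y,x) = \mu(t)\langle V(y),y-y^*\rangle + \tfrac12\|\eta(y-y^*)+x+\mu(t)V(y)\|^2 + \tfrac12\eta(\lambda-\eta)\|y-y^*\|^2$, the $H$-contributions of the squared term cancel internally (the $x$-derivative paired with $-\mu(t)H(t)\,dt$ exactly offsets the $v$-derivative paired with $H(t)\,dt$, since $x$ and $\mu(t)v$ carry equal weight inside the norm), so the drift retains the uncancelled term $\mu(t)\langle H(t), Y(t)-y^*\rangle$ coming from the standalone gap term; this quantity has no sign and cannot be absorbed into a quadratic form in $\|X\|$ and $\mu\|V(Y)\|$. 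Moreover, no $-c\,\mu(t)\langle X(t),H(t)\rangle$ term with $c>0$ survives, so the monotonicity inequality $\langle X(t),H(t)\rangle\geq 0$ cannot be exploited. The paper's energy $\tfrac12\|2\eta(y-y^*)+2x+\mu(t)v\|^2 + 2\eta(\lambda-\eta)\|y-y^*\|^2 + 2\eta\mu(t)\langle y-y^*,v\rangle + \tfrac12\mu(t)^2\|v\|^2$ is calibrated precisely so that the indefinite $\langle Y-y^*,H\rangle$ and $\langle V(Y),H\rangle$ contributions cancel across the four blocks while a usable $-2\mu(t)\langle X(t),H(t)\rangle\leq 0$ remains; this is not a ``lower-order correction in $\gamma(t)-\lambda\mu(t)$'' but the essential structural choice. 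The second missing piece is the verification that the remaining drift is nonpositive: the paper shows that, writing $\varepsilon=\lambda-\eta$, the relevant quadratic form in $\|X\|$ and $\|V(Y)\|$ has discriminant $B(t)^2-A(t)C(t) = \mu(t)^2\bigl(4\varepsilon^2-4\varepsilon(\lambda-\tfrac{\gamma(t)}{\mu(t)})+(\lambda-\tfrac{2\gamma(t)}{\mu(t)}+\tfrac{\dot\mu(t)}{\mu(t)})^2\bigr)$, and uses all three conditions in \eqref{eq:assumptions-convergence-operator} to exhibit a fixed interval $I\subseteq(0,\lambda)$ of admissible $\varepsilon$ on which this is negative for all large $t$ uniformly. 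Without carrying out this calibration and discriminant analysis, the claimed identity $d\mathcal{E}_\eta = -\Delta(t)\,dt + \|\sigma_Y(t)\|_{HS}^2\,dt + dN(t)$ with $\Delta\geq 0$ is unsubstantiated, and the rest of the argument has nothing to rest on.
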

\begin{proof}
Choose $y^* \in \zer V$ and $0 < \eta < \lambda$. Consider the energy function
    \begin{align}
        \mathcal{E}_{\eta}(t,x,y,v) = & \ \frac{1}{2} \left\| 2\eta(y - y^*) + 2x + \mu(t) v\right\|^2 \label{eq:def-energy-shbf-op-line-1} \tag{$\mathcal{E}_\eta^1(t,x,y,v)$}
        \\& \ + 2\eta(\lambda-\eta) \|y - y^*\|^2 \label{eq:def-energy-sbhf-op-line-2}\tag{$\mathcal{E}_\eta^2(t,x,y,v)$}
        \\& \ + 2\eta\mu(t) \langle y - y^*, v\rangle \label{eq:def-energy-shbf-op-line-3}\tag{$\mathcal{E}_\eta^3(t,x,y,v)$}
        \\& \ + \frac{1}{2}\mu(t)^2\|v\|^2 \label{eq:def-energy-shbf-op-line-4} \tag{$\mathcal{E}_\eta^4(t,x,y,v)$}.
    \end{align}
We compute the derivatives
    \begin{align*}
        &\frac{d}{dt}\mathcal{E}_\eta^1(t,x,y,v) = \dot{\mu}(t)\langle v, 2\eta(y-y^*) + 2x + \mu(t)v\rangle,
        \\&\frac{d}{dx}\mathcal{E}_\eta^1(t,x,y,v) = 2(2\eta(y-y^*) + 2x + \mu(t)v),
        \\&\frac{d}{dy}\mathcal{E}_\eta^1(t,x,y,v) = 2\eta(2\eta(y-y^*) + 2x + \mu(t)v),
        \\&\frac{d}{dv}\mathcal{E}_\eta^1(t,x,y,v) = \mu(t)(2\eta(y-y^*) + 2x + \mu(t)v),
        \\&\frac{d^2}{dx^2}\mathcal{E}_\eta^1(t,x,y,v) = 4\Id,
        \\
        \\&\frac{d}{dy}\mathcal{E}_\eta^2(t,x,y,v) = 4\eta(\lambda-\eta)(y-y^*),
        \\
        \\&\frac{d}{dt}\mathcal{E}_\eta^3(t,x,y,v) = 2\eta\dot{\mu}(t)\langle y-y^*, v\rangle,
        \\&\frac{d}{dy}\mathcal{E}_\eta^3(t,x,y,v) = 2\eta\mu(t)v,
        \\&\frac{d}{dv}\mathcal{E}_\eta^3(t,x,y,v) = 2\eta\mu(t)(y-y^*)
        \\
        \\&\frac{d}{dt}\mathcal{E}_\eta^4(t,x,y,v) = \mu(t)\dot{\mu}(t)\|v\|^2,
        \\&\frac{d}{dv}\mathcal{E}_\eta^4(t,x,y,v) = \mu(t)^2 v.
    \end{align*}
The continuity of $(Y,X)$, the Lipschitz continuity of $V$ and the square integrability of the diffusion term allow us to apply It\^o's formula as stated in Proposition \ref{prop:ito-formula}. For every $t \geq t_0$, it holds
    \begin{align*}
        & \ d\mathcal{E}_\eta^1(t,X(t),Y(t),V(Y(t)))\\
        = & \ \langle 2\eta(Y(t)-y^*) + 2X(t) + \mu(t)V(Y(t)), \dot{\mu}(t)V(Y(t)) + 2\eta X(t) \rangle  dt\\
          & \ + \langle 2\eta(Y(t)-y^*) + 2X(t) + \mu(t)V(Y(t)), 2(-\lambda X(t) - \mu(t) H(t) - \gamma(t) V(Y(t))) + \mu(t) H(t)\rangle  dt\\
        & \ + \langle \sigma_Y(t)^*(2(2\eta(Y(t)-y^*) + 2X(t) + \mu(t)V(Y(t)))), dW(t)\rangle + 2 \trace(\sigma_Y(t)\sigma_Y(t)^*) dt\\
        = & \ \Big(4\eta(\eta-\lambda)\langle Y(t) - y^*, X(t)\rangle + 2\eta(\dot{\mu}(t) - 2\gamma(t))\langle Y(t) - y^*, V(Y(t))\rangle - 2\eta\mu(t) \langle Y(t) - y^*, H(t)\rangle\\
        & \quad + 4(\eta-\lambda)\|X(t)\|^2 + (2(\dot{\mu}(t)-2\gamma(t)) + 2(\eta-\lambda)\mu(t))\langle X(t), V(Y(t))\rangle - 2\mu(t)\langle X(t), H(t)\rangle\\
        &\quad + \mu(t)(\dot{\mu}(t) - 2\gamma(t))\|V(Y(t))\|^2 - \mu(t)^2\langle V(Y(t)), H(t)\rangle\Big)dt\\
        &\quad + \langle \sigma_Y(t)^*(2(2\eta(Y(t)-y^*) + 2X(t) + \mu(t)V(Y(t)))), dW(t)\rangle + 2 \|\sigma_Y(t)\|^2_{HS} dt,\\
        & \ d\mathcal{E}_\eta^2(t,X(t),Y(t),V(X(t))) = 4\eta(\lambda-\eta) \langle Y(t)-y^*, X(t) \rangle dt,\\
        & \ d\mathcal{E}_{\eta}^3(t,X(t),Y(t),V(Y(t))) \\
        = & \ (2\eta\dot{\mu}(t) \langle Y(t) - y^*, V(Y(t))\rangle + 2\eta \mu(t)\langle X(t), V(Y(t))\rangle + 2\eta\mu(t) \langle Y(t)-y^*, H(t)\rangle)dt,\\
        & \ d\mathcal{E}_\eta^4 (t,X(t),Y(t),V(Y(t))) = (\mu(t)\dot{\mu}(t)\|V(Y(t))\|^2 + \mu(t)^2 \langle V(Y(t)), H(t)\rangle) dt.
    \end{align*}
Summing these formulas gives, for every $t \geq t_0$,
    \begin{align*}
    \begin{split}
        & \ d\mathcal{E}_{\eta}(t,X(t),Y(t),V(X(t)))\\
        = & \ d\mathcal{E}_\eta^1(t,X(t),Y(t),V(Y(t))) + d\mathcal{E}_\eta^2(t,X(t),Y(t),V(Y(t)))\\
        & + d\mathcal{E}_\eta^3(t,X(t),Y(t),V(Y(t))) + d\mathcal{E}_\eta^4(t,X(t),Y(t),V(Y(t)))\\
        = & \ \Big(4\eta(\dot{\mu}(t) - \gamma(t))\langle Y(t)-y^*,V(Y(t))\rangle + 4(\eta-\lambda)\|X(t)\|^2  - 2\mu(t)\langle X(t), H(t)\rangle\\
        &\quad+ 2(2(\eta-\lambda)\mu(t) + \lambda \mu(t) - 2\gamma(t) + \dot{\mu}(t))\langle X(t),V(Y(t))\rangle + 2\mu(t)(\dot{\mu}(t) - \gamma(t))\|V(Y(t))\|^2\Big)dt
        \\&\quad + \langle \sigma_Y(t)^*(2(2\eta(Y(t)-y^*) + 2X(t) + \mu(t)V(Y(t)))), dW(t)\rangle + 2 \|\sigma_Y(t)\|^2_{HS} dt.
    \end{split}
    \end{align*}
We set
    \begin{align*}
        \varepsilon := \lambda-\eta >0.
    \end{align*}
    In the following analysis, it will be important to know the sign of
    \begin{align}\label{eq:part-of-energy-function-operator}
       t \mapsto -3\varepsilon\|X(t)\|^2 + 2(-2\varepsilon\mu(t) + \lambda\mu(t) - 2\gamma(t) + \dot{\mu}(t))\langle X(t), V(Y(t))\rangle + \frac{4}{3}\mu(t)(\dot{\mu}(t) - \gamma(t))\|V(Y(t))\|^2.
    \end{align}
 To accomplish this, we set
    \begin{align*}
        A(t):= -3\varepsilon, \quad B(t):= -2\varepsilon \mu(t) + \lambda\mu(t) - 2\gamma(t) + \dot{\mu}(t), \quad C(t):=\frac{4}{3}\mu(t)(\dot{\mu}(t) - \gamma(t))\|V(Y(t))\|^2.
    \end{align*}
It holds
    \begin{align*}
        B(t)^2-A(t)C(t) & = (-2\varepsilon\mu(t) + \lambda\mu(t) - 2\gamma(t) + \dot{\mu}(t))^2 + 4\varepsilon\mu(t)(\dot{\mu}(t) - \gamma(t))\\
        & = 4\varepsilon^2\mu(t)^2 - 4\varepsilon\mu(t)(\lambda\mu(t) - 2\gamma(t) + \dot{\mu}(t)) + (\lambda\mu(t) - 2\gamma(t) + \dot{\mu}(t))^2 + + 4\varepsilon\mu(t)(\dot{\mu}(t) - \gamma(t)) \\
        &= \mu(t)^2 \left(4\varepsilon^2 - 4 \varepsilon  \left(\lambda - \frac{\gamma(t)}{\mu(t)}\right) + \left(\lambda - \frac{2\gamma(t)}{\mu(t)} + \frac{\dot{\mu}(t)}{\mu(t)}\right)^2\right).
    \end{align*}
Our next aim is to identify an interval $I \subseteq (0, \lambda)$ such that 
the parabola $B^2-AC$ is negative for every $\varepsilon \in I$ and large enough $t$. To this end we use a technique introduced in \cite{attouch-bot-hulett-nguyen--heavy-ball}. 

From~\eqref{eq:assumptions-convergence-operator}, there exist $\delta_1,\delta_2$ such that
    \begin{align}\label{eq:introduce-delta1-and-delta2}
       \max \left (2-\frac{\lambda}{\ell}, \sup_{t \geq t_0} \frac{\dot{\mu}(t)}{\gamma(t)} \right )<\delta_1<\delta_2 < 1,
    \end{align}
which implies
    \begin{align*}
        &\lambda - \frac{2\gamma(t)}{\mu(t)} + \frac{\dot{\mu}(t)}{\mu(t)} < \lambda + (\delta_1 - 2)\frac{\gamma(t)}{\mu(t)} < \lambda + (\delta_2 - 2)\frac{\gamma(t)}{\mu(t)} < \lambda - \frac{\gamma(t)}{\mu(t)} \quad \forall t\geq t_0.
    \end{align*}
Using that $\lim_{t\rightarrow +\infty} \frac{\gamma(t)}{\mu(t)} =  \ell > 0$, there exist $\widetilde{\delta_1}, \widetilde{\delta_2}>0$ and $t_1\geq t_0$ and such that
    \begin{align}\label{eq:estimates-for-operator-rates-1}
 \lambda - \frac{2\gamma(t)}{\mu(t)} + \frac{\dot{\mu}(t)}{\mu(t)} < \lambda + (\delta_1 - 2)\ell < \widetilde{\delta}_1 < \widetilde{\delta}_2 < \lambda + (\delta_2 - 2)\ell < \lambda - \frac{\gamma(t)}{\mu(t)} \quad \forall t\geq t_1.
    \end{align}
On the other hand, we also know from~\eqref{eq:assumptions-convergence-operator} that
\begin{align*}
        \ell - \lambda < \lambda - 2\ell - \inf_{t \geq t_0} \frac{\dot{\mu}(t)}{\mu(t)}.
\end{align*}
We choose $\widetilde{\delta}_3, \widetilde{\delta}_4>0$ such that 
\begin{align*}
\ell - \lambda < -\widetilde{\delta}_4 < -\widetilde{\delta}_3 < \lambda - 2\ell + \inf_{t\geq t_0}\frac{\dot{\mu}(t)}{\mu(t)},
\end{align*}
and, using again that $\lim_{t\rightarrow +\infty} \frac{\gamma(t)}{\mu(t)} = \ell >0$, we obtain that there exists $t_2 > t_1$ such that
    \begin{align}\label{eq:estimates-for-operator-rates-2}
\frac{\gamma(t)}{\mu(t)} - \lambda < -\widetilde{\delta}_4 < -\widetilde{\delta}_3 < \lambda - \frac{2\gamma(t)}{\mu(t)} + \inf_{s\geq t_0}\frac{\dot{\mu}(s)}{\mu(s)} \leq \lambda - \frac{2\gamma(t)}{\mu(t)} + \frac{\dot{\mu}(t)}{\mu(t)} \quad \forall t\geq t_2.
    \end{align}
Combining~\eqref{eq:estimates-for-operator-rates-1} with~\eqref{eq:estimates-for-operator-rates-2}, we obtain
    \begin{align*}
        \frac{\gamma(t)}{\mu(t)} - \lambda < -\widetilde{\delta}_4 < -\widetilde{\delta}_3 < \lambda - \frac{2\gamma(t)}{\mu(t)} + \frac{\dot{\mu}(t)}{\mu(t)} < \widetilde{\delta}_1 < \widetilde{\delta}_2 < \lambda - \frac{\gamma(t)}{\mu(t)} \quad \forall t\geq t_2.
    \end{align*}
Without loss of generality, we assume
    \begin{align*}
        \max(\widetilde{\delta}_1,\widetilde{\delta}_3) = \widetilde{\delta}_1.
    \end{align*}
Then, the discriminant of the quadratic expression in $B(t)^2-A(t)C(t)$ is
    \begin{align}\label{eq:discriminant-parabola}
        \Delta_t := 16 \left(\left(\lambda-\frac{\gamma(t)}{\mu(t)}\right)^2 - \left(\lambda - \frac{2\gamma(t)}{\mu(t)} + \frac{\dot{\mu}(t)}{\mu(t)}\right)^2 \right) \geq 16(\widetilde{\delta}_2^2 - \widetilde{\delta}_1^2) > 0.
    \end{align}
Now, we choose $\delta_3>0$ such that
    \begin{align*}
        \delta_3 < \min\left( \ell,\lambda - \ell, \frac{\sqrt{\widetilde{\delta}_2^2 - \widetilde{\delta}_1^2}}{2}\right).
    \end{align*}
Using that $\lim_{t\rightarrow +\infty} \frac{\gamma(t)}{\mu(t)} = \ell$ once more yields the existence of $t_3\geq t_2$ such that
    \begin{align}\label{eq:help-operator-rates}
        \lambda - \ell -\delta_3 \leq \lambda - \frac{\gamma(t)}{\mu(t)} \leq \lambda - \ell + \delta_3 \quad \forall t\geq t_3.
    \end{align}
Going back to $B(t)^2-A(t)C(t)$, the roots of the quadratic form in this expression are given by
    \begin{align*}
        \underline{\varepsilon}_t := \frac{1}{2}\left(\lambda - \frac{\gamma(t)}{\mu(t)}\right) - \frac{\sqrt{\Delta_t}}{8} \ \mbox{and} \ \overline{\varepsilon}_t := \frac{1}{2}\left(\lambda - \frac{\gamma(t)}{\mu(t)}\right) +  \frac{\sqrt{\Delta_t}}{8}.
    \end{align*}
Using~\eqref{eq:discriminant-parabola} and~\eqref{eq:help-operator-rates}, we see that
    \begin{align*}
 \underline{\varepsilon}_t = \frac{1}{2}\left(\lambda - \frac{\gamma(t)}{\mu(t)}\right) - \frac{\sqrt{\Delta_t}}{8} < \frac{\lambda - \ell + \delta_3}{2} - \frac{\sqrt{\widetilde{\delta}_2^2 - \widetilde{\delta}_1^2}}{2} < \frac{\lambda - \ell -\delta_3}{2} \quad \forall t\geq t_3.
    \end{align*}
Similarly, we obtain
    \begin{align*}
        \frac{\lambda - \ell + \delta_3}{2} < \overline{\varepsilon}_t \quad \forall t\geq t_3.
    \end{align*}
In other words,
    \begin{align*}
        I:= \left[\frac{\lambda-\ell-\delta_3}{2}, \frac{\lambda-\ell+\delta_3}{2}\right] \subseteq (\underline{\varepsilon}_t,\overline{\varepsilon}_t)  \quad \forall t\geq t_3.
    \end{align*}
The way that $\delta_3$ was chosen also ensures that
    \begin{align*}
        I \subseteq (0,\lambda).
    \end{align*}
Therefore, for every $\varepsilon\in I$ and every $t \geq t_3$,
    \begin{align}\label{eq:negquad}
        B(t)^2 - A(t)C(t) < 0.
    \end{align}
    
In the following, we choose $\varepsilon \in I$ and set $\eta := \lambda - \varepsilon \in (0, \lambda)$. For every $t \geq t_3$, we have 
\begin{align*}
        d\mathcal{E}_\eta(t,Y(t),X(t),V(Y(t))) = & \ u(t) dt + \langle \sigma_Y(t)^*(2(2\eta(Y(t)-y^*) + 2X(t) + \mu(t)V(Y(t)))), dW(t)\rangle\\
        & \ + 2 \|\sigma_Y(t)\|^2_{HS} dt,
\end{align*}
where
\begin{align*}
u(t) := & \ 4\eta(\dot{\mu}(t) - \gamma(t))\langle Y(t)-y^*,V(Y(t))\rangle + 4(\eta-\lambda)\|X(t)\|^2  - 2\mu(t)\langle X(t), H(t)\rangle\\
        & \ + 2(2(\eta-\lambda)\mu(t) + \lambda \mu(t) - 2\gamma(t) + \dot{\mu}(t))\langle X(t),V(Y(t))\rangle + 2\mu(t)(\dot{\mu}(t) - \gamma(t))\|V(Y(t))\|^2 \\
        \leq & \ 4\eta \left(\dot{\mu}(t) - \gamma(t)\right)\langle Y(t) - y^*, V(Y(t))\rangle + (\eta-\lambda)\|X(t)\|^2 + \frac{2}{3}\mu(t)\left(\dot{\mu}(t)- \gamma(t)\right)\|V(Y(t))\|^2\\
        \leq & \ 0,
\end{align*}        
which follows from \eqref{eq:negquad}, together with the facts that $\langle X(t),H(t)\rangle \geq 0$ and $\langle Y(t) - y^*, V(Y(t))\rangle \geq 0$, both of which are consequences of the monotonicity of $V$.

Denoting
\begin{align*}
         U(t) := & \int_{t_3}^{t} (-u (s))ds, \quad A(t) := \int_{t_3}^{t} 2\|\sigma_Y(s)\|^2_{HS} ds,\\
        N(t) := & \int_{t_3}^{t} \langle \sigma_Y(s)^*(2(2\eta(Y(s)-y^*) + 2X(s) + \mu(s)V(Y(s)))), dW(s)\rangle,
    \end{align*}
we thus obtain, for every $t \geq t_3$,
\begin{align}\label{eq:expect}
        \mathcal{E}_\eta(t,Y(t),X(t),V(Y(t))) = \mathcal{E}_{\eta}(t_3,Y(t_3),X(t_3),V(Y(t_3))) - U(t) + N(t) + A(t).
\end{align}
The processes $A(\cdot)$ and $U(\cdot)$ are increasing and fulfill $A(t_3) = U(t_3) = 0$. According to Proposition~\ref{prop:ito-formula}, $N(\cdot)$ is a martingale, since $(Y,X) \in S^2_{{\cal H} \times {\cal H}}$ and for every $t \geq t_3$
\begin{align*}
        & \ \mathbb{E}\left(\int_{t_3}^{t}\left\|\sigma_Y(s)^*(2\eta(Y(s)-y^*) + 2X(s) + \mu(s)V(Y(s)))\right\|^2 ds\right) \\
        \leq & \ \mathbb{E}\left(\int_{t_3}^{t} 4 \|\sigma_Y(s)\|_{HS}^2\big((4\eta^2+2L_V^2\mu(s)^2) (\|Y(s)\|^2+\|y^*\|^2) + 4\|X(s)\|^2\big) ds\right)
        \\
        \leq & \ 4 \max\left(\sup_{t_3\leq s \leq t}(4\eta^2+2L_V^2\mu(s)^2), 4\right)\mathbb{E}\left(\sup_{t_3\leq s \leq t}\|X(s)\|^2 + \sup_{t_3\leq s \leq t}\|Y(s)\|^2 + \|y^*\|^2\right)\int_{t_3}^{t} \|\sigma_Y(s)\|_{HS}^2 ds < +\infty.
    \end{align*}
From the square-integrability of $\sigma_Y$, we have $\lim_{t\rightarrow+\infty} A(t)<+\infty$, therefore, by Theorem~\ref{thm:A.9-in-paper}, the limits $\lim_{t\rightarrow +\infty} \mathcal{E}_\eta(t,Y(t),X(t),V(Y(t))) <+\infty$ and  $\lim_{t\rightarrow +\infty} U(t)$ exist and are finite a.s.
    
Taking the expectation in~\eqref{eq:expect}, we obtain, for every $t\geq t_0$, that
    \begin{align*}
        \mathbb{E}\left(\mathcal{E}_\eta(t,Y(t),X(t), V(Y(t))) \right) &\leq \mathbb{E}(\mathcal{E}_\eta(t_0,Y(t_3),X(t_3), V(Y(t_3))) + 2 \mathbb{E}\left(\int_{t_3}^{t} \|\sigma_Y(s)\|^2_{HS} ds\right)
        \\&\leq \mathbb{E}(\mathcal{E}_\eta(t_0,Y(t_3),X(t_3), V(Y(t_3))) + 2 \mathbb{E}\left(\int_{t_0}^{+\infty} \|\sigma_Y(s)\|^2_{HS}ds\right) < +\infty.
\end{align*}
This means, in particular, that
    \begin{align*}
\sup_{t \geq t_0}\mathbb{E}\left(\|2 \eta(Y(t)-y^*) + 2X(t) + \mu(t) V(Y(t)) \|^2\right) <+\infty, \quad \sup_{t \geq t_0} \mathbb{E}\left(\|Y(t)-y^*\|^2\right) <+\infty,
    \end{align*}
    \begin{align*}
\sup_{t \geq t_0}\mathbb{E}\left(\mu(t) \langle Y(t)-y_*, V(Y(t)) \rangle \right) < +\infty \quad \mbox{and} \quad \sup_{t \geq t_0} \mathbb{E}\left(\mu(t)^2\|V(Y(t))\|^2\right) <+\infty.
    \end{align*}
Thus,
    \begin{align*}
        \sup_{t\geq t_0} \mathbb{E}(\|X(t)\|^2) <+\infty,
    \end{align*}
which proves (ii).
    
Recall that from \eqref{eq:introduce-delta1-and-delta2} and \eqref{eq:help-operator-rates} we have, for every $t\geq t_3$, that $\frac{\dot{\mu}(t)}{\gamma(t)}\leq \delta_1 < 1$ and $\ell - \delta_3 \leq \frac{\gamma(t)}{\mu(t)}\leq \ell + \delta_3$, thus
    \begin{align*}
(1-\delta_1)(\ell-\delta_3)\mu(t) \leq (1-\delta_1)\gamma(t) \leq \gamma(t) - \dot{\mu}(t).
    \end{align*}
Therefore, for every $t \geq t_3$,
    \begin{align*}
        & \ 4\eta(\ell-\delta_3)(1-\delta_1)\int_{t_3}^{t} \mu(s)\langle Y(s) - y^*, V(Y(s))\rangle ds + (\lambda - \eta) \int_{t_3}^{t} \|X(s)\|^2ds \\
        & + \frac{2}{3}(\ell-\delta_3)(1-\delta_1)\int_{t_3}^{t} \mu(s)^2\|V(Y(s))\|^2 ds + \frac{2}{3}\int_{t_3}^{t}\mu(s)\left(\gamma(s)-\dot{\mu}(s)\right)\|V(Y(s))\|^2ds \\
        \leq & \ 4\eta \int_{t_0}^{t} \left(\gamma(s) - \dot{\mu}(s)\right)\langle Y(s) - y^*, V(Y(s))\rangle ds + (\lambda - \eta)\int_{t_3}^{t} \|X(s)\|^2ds\\
        & \ + \frac{2}{3}\int_{t_3}^{t}\mu(s)\left(\gamma(s)-\dot{\mu}(s)\right)\|V(Y(s))\|^2ds\\
        \leq & \ \int_{t_3}^{t} (-u(s))ds  = U(t).
    \end{align*}
Given that $\lim_{t \to +\infty} U(t) < +\infty$ a.s., it follows that
    \begin{align*}
\int_{t_3}^{+\infty} \mu(t) \langle Y(t) - y^*, V(Y(t))\rangle dt < +\infty, \quad
         \int_{t_3}^{+\infty} \|X(t)\|^2dt < +\infty \quad \mbox{and} \quad
        \int_{t_3}^{+\infty} \mu(t)^2\|V(Y(t))\|^2 dt < +\infty \ \mbox{a.s.}
    \end{align*}
    
Now, we choose $\varepsilon_1,\varepsilon_2\in I$, $\varepsilon_1\neq \varepsilon_2$, and set $\eta_i := \lambda - \varepsilon_i, i=1,2$. For every $t \geq t_3$ it holds
    \begin{align*}
        & \ \mathcal{E}_{\eta_2}(t,Y(t),X(t),V(Y(t))) - \mathcal{E}_{\eta_1}(t,Y(t),X(t),V(Y(t))) \\
        = & \ \frac{1}{2}\left(\left\|2\eta_2(Y(t) - y^*) + 2X(t) + \mu(t)V(Y(t))\right\|^2 - \left\| 2\eta_2(Y(t) - y^*) + 2X(t) + \mu(t)V(Y(t))\right\|^2\right)\\
        & \ + 2(\eta_2(\lambda-\eta_2) - \eta_1(\lambda - \eta_1))\|Y(t) - y^*\|^2 + 2(\eta_2 - \eta_1) \mu(t)\langle Y(t) - y^*, V(Y(t))\rangle\\
        = & \ 2(\eta_2^2-\eta_1^2)\|Y(t)-y^*\|^2 + 2(\eta_2-\eta_1)\langle Y(t)-y^*, 2X(t) + \mu(t)V(Y(t))\rangle\\
        & \ + \left(2\lambda(\eta_2-\eta_1)-2(\eta_2^2-\eta_1^2)\right)\|Y(t)-y^*\|^2 + 2(\eta_2-\eta_1)\mu(t)\langle Y(t) - y^*, V(Y(t))\rangle\\
        = & \ 4(\eta_2-\eta_1) \left(\frac{\lambda}{2}\|Y(t)-y^*\|^2 + \left\langle Y(t)-y^*, X(t) + \mu(t)V(Y(t))\right\rangle\right).
    \end{align*}
We denote
    \begin{align*}
        p(t) := \frac{\lambda}{2}\|Y(t)-y^*\|^2 + \langle Y(t)-y^*, X(t) + \mu(t)V(Y(t))\rangle \quad \forall t \geq t_3.
    \end{align*}
Thus, since $\lim_{t\rightarrow +\infty}\mathcal{E}(t,Y(t),X(t),V(Y(t))) < +\infty$ exists a.s., it follows that $\lim_{t\rightarrow +\infty}p(t) <+\infty$ must exist a.s. With this in mind, we rewrite $\mathcal{E}_{\eta}(t,Y(t),X(t),V(Y(t)))$, for every $t \geq t_3$, as
    \begin{align*}
        \mathcal{E}_{\eta}(t,Y(t),X(t),V(Y(t))) = & \ \frac{1}{2}\left\| 2\eta (Y(t)-y^*) + 2X(t) + \mu(t)V(Y(t))\right\|^2 + 2\eta (\lambda-\eta)\|Y(t)-y^*\|^2\\
        & \ + 2\eta \mu(t)\langle Y(t)-y^*, V(Y(t))\rangle + \frac{1}{2}\mu(t)^2\|V(Y(t))\|^2\\
        = & \ 4\eta p(t) + \frac{1}{2}\|2X(t) + \mu(t) V(Y(t))\|^2 + \frac{1}{2}\mu(t)^2\|V(Y(t))\|^2\\
        = & \ 4\eta p(t) + \|X(t) + \mu(t)V(Y(t))\|^2 + \|X(t)\|^2.
    \end{align*}
    Define
    \begin{align*}
        h(t) := \|X(t) + \mu(t)V(Y(t))\|^2 + \|X(t)\|^2 \quad \forall t \geq t_3.
    \end{align*}
Since both $\lim_{t\rightarrow +\infty} \mathcal{E}_{\eta_1}(t,Y(t),X(t),V(Y(t))) < +\infty$ and $\lim_{t\rightarrow +\infty} p(t) < +\infty$ exist a.s., so does $\lim_{t\rightarrow +\infty} h(t) < +\infty$ exist. Since
    \begin{align*}
        \int_{t_3}^{+\infty} h(t)dt \leq 3\int_{t_3}^{+\infty} \|X(t)\|^2dt + 2\int_{t_3}^{+\infty} \mu(t)^2\|V(Y(t))\|^2dt < +\infty,
    \end{align*}
it yields that $\lim_{t\rightarrow+\infty} h(t) = 0$ a.s.
    Thus,
    \begin{align*}
        \lim_{t\rightarrow +\infty} \|X(t)\| =0 \ \mbox{a.s.},
    \end{align*}
    which can be combined with $\lim_{t\rightarrow +\infty} \|X(t) + \mu(t)V(Y(t))\|^2 = 0$ a.s. to deduce
    \begin{align*}
        \lim_{t\rightarrow +\infty} \mu(t)\|V(Y(t))\| = 0 \ \mbox{a.s.}
    \end{align*}
Using the boundedness of $t \mapsto \|Y(t)-y^*\|$, it yields
    \begin{align*}
        \lim_{t\rightarrow +\infty} \mu(t) \langle Y(t) - y^*, V(Y(t))\rangle = 0 \ \mbox{a.s.},
    \end{align*}
thus, (i) is proved.
    
It remains to show weak convergence of the trajectory $Y(t)$. Combining the fact that $\lim_{t \to +\infty} p(t) < +\infty$ and  $\lim_{t\rightarrow +\infty} \mu(t) \langle Y(t) - y^*, V(Y(t))\rangle = 0$, we obtain
$$\lim_{t \to +\infty} \left(\frac{\lambda}{2}\|Y(t)-y^*\|^2 + \langle Y(t)-y^*, X(t) \rangle \right) < +\infty \quad \mbox{a.s.}$$
Further, Lemma~\ref{lem:aux-convergence} guarantees that 
\begin{align*}
        \exists \lim_{t\rightarrow +\infty}\|Y(t)-y^*\| < +\infty \ \mbox{a.s.}
    \end{align*}
Hence, there exist subsets $\Omega_1,\Omega_{y*} \subseteq \Omega$ of full measure such that
    \begin{align*}
        \|V(Y(\omega,t))\| = o\left(\frac{1}{b(t)}\right) \ \mbox{} \forall \omega\in\Omega_1 \quad \mbox{and} \quad \lim_{t\rightarrow +\infty} \|Y(\omega,t) - y^*\| < +\infty \ \mbox{} \forall \omega\in\Omega_{y^*}.
    \end{align*}
Since $\zer V$ is a closed set, by using the separability of $\mathcal{H}$, we can show by repeating the arguments from the proof of Theorem \ref{thm:convergence-shbf} that there exists a countable set $S \subseteq \mathcal{H}$, which is dense in $\zer V$, such that $\mathbb{P}(\bigcap_{s\in S}\Omega_s \cap \Omega_1) = 1$ and, for every $\omega\in\bigcap_{s\in S} \Omega_s \cap \Omega_1$ and for every $y^*\in\zer V$, it holds
\begin{align*}
        \lim_{t\rightarrow +\infty} \|Y(\omega,t) - y^*\| < +\infty.
\end{align*}
Let $\Omega_\text{cont}\in\mathcal{F}$ be such that $\mathbb{P}(\Omega_\text{cont}) = 1$ and $Y(\omega, \cdot)$ is continuous for every $\omega\in\Omega_\text{cont}$, and  $\Omega_\text{converge} := \Omega_1 \cap \Omega_\text{cont} \cap \bigcap_{s\in S} \Omega_s$, which is also a subset of full measure of $\Omega$. 

We fix $\omega\in\Omega_\text{converge}$. Since $\lim_{t\rightarrow +\infty} \|Y(\omega,t) - y^*\| < +\infty$ exists, for every $y^*\in\zer V$, the first requirement for Opial's Lemma (\cite{opial}) is satisfied. As $Y(\omega, \cdot)$ is bounded, we can choose weak limit point $\bar y(\omega)$  of it. Using (i) and the fact that the graph of $V$ is sequentially closed in the weak$\times$strong topology of $\mathcal{H}\times\mathcal{H}$ (see~\cite{bauschke-combettes}), it yields  $V(\bar y(\omega)) = 0$. Therefore, $\bar y(\omega) \in \zer V$. This shows that the second condition of Opial's Lemma is also satisfied and therefore proves that $Y(\omega, t)$ converges weakly to a zero of $V$ as $t \to +\infty$.
\end{proof}

\subsection{Application to the stochastic Fast Optimistic Gradient Descent Ascent (OGDA) system}

Analogously to the minimization setting, we show -- using again time-scaling arguments -- that a particular formulation of \eqref{eq:SHBF-op} is closely connected to the stochastic variant of the Fast Optimistic Gradient Descent Ascent (OGDA) system recently introduced by Bo\c{t}, Csetnek, and Nguyen in \cite{bot-csetnek-nguyen}. This connection will also enable us to establish long-time results for the latter.

\begin{thm}\label{thm:transfer-results-to-SFOGDA-op}
Let $\alpha >1$ and $\beta >0$.  For given $t_0 \geq 0$ and $s_0 >0$, consider the dynamical systems
    \begin{align}\tag{SHBFOPEXP}\label{eq:SHBF-in-thm}
    \begin{cases}
        dY(t) = X(t)dt
        \\ dV(Y(t)) = H(t) dt
        \\ dX(t) = \left(-\frac{2(\alpha-1)}{\alpha} X(t) - \frac{2\beta s_0}{\alpha}\exp\left(\frac{2(t-t_0)}{\alpha}\right) H(t) - \frac{2\beta s_0}{\alpha}\exp\left(\frac{2(t-t_0)}{\alpha}\right) V(Y(t))\right) dt + \sigma_Y(t) dW(t)\\ 
         Y(t_0) = Y_0,  \ X(t_0) = X_0
    \end{cases}
\end{align}
on $[t_0, +\infty)$ and
\begin{align}\tag{SFOGDA}\label{eq:SFOGDA-in-thm}
    \begin{cases}
        dZ(s) = Q(s) ds\\
        dV(Z(s)) = K(s) ds\\
        dQ(s) = \left(-\frac{\alpha}{s}Q(s) - \beta K(s) - \frac{\alpha \beta}{2s} V(Z(s))\right) + \sigma_Z(s) dW(s)\\ 
         Z(s_0) = Z_0,  \ Q(s_0) = Q_0
    \end{cases}
\end{align}
on $[s_0, +\infty)$. Then, the following statements are true:
\begin{enumerate}[(i)]
    \item Let $(Y,X)$ be a trajectory solution of~\eqref{eq:SHBF-in-thm}. Then, for
    \begin{align*}
        \tau(s) := t_0 + \frac{\alpha}{2}\log\left(\frac{s}{s_0}\right)  \quad \forall s\geq s_0,
    \end{align*}
    it holds that $(Z,Q)$ defined by
    \begin{align*}
        Z(s):=Y(\tau(s)) \quad \mbox{and} \quad Q(s) := \frac{\alpha}{2s}X(\tau(s))
    \end{align*}
is a trajectory solution of~\eqref{eq:SFOGDA-in-thm} with initial conditions $Z(s_0) = Z_0 = Y_0$ and $Q(s_0)= Q_0 = \frac{\alpha}{2s_0}X_0$ and diffusion term
    \begin{align*}
        \sigma_Z(s) = \left(\dot{\tau}(s)\right)^\frac{3}{2}\sigma_Y(\tau(s)) = \left(\frac{\alpha}{2 s}\right)^\frac{3}{2}\sigma_Y(\tau(s)).
    \end{align*}
    \item Let $(Z,Q)$ be a trajectory solution of~\eqref{eq:SFOGDA-in-thm}. Then, for
    \begin{align*}
        \kappa(t) := s_0\exp\left(\frac{2(t-t_0)}{\alpha}\right) \quad \forall t\geq t_0,
    \end{align*}
    it holds that $(Y,X)$ defined by
    \begin{align*}
        Y(t):=Z(\kappa(t)) \quad \mbox{and} \quad X(t) := \frac{2s_0}{\alpha}\exp\left(\frac{2(t-t_0)}{\alpha}\right)Q(\kappa(t))
    \end{align*}
is a trajectory solution of~\eqref{eq:SHBF-in-thm} with initial conditions
$Y(t_0) = Y_0 =  Z_0$ and $X(t_0) = X_0 =  \frac{2s_0}{\alpha}Q_0$ and diffusion term
    \begin{align*}
        \sigma_Y(t) = \left(\dot{\kappa}(t)\right)^\frac{3}{2}\sigma_Z(\kappa(t)) = \left(\frac{2s_0}{\alpha}\exp\left(\frac{2(t-t_0)}{\alpha}\right)\right)^\frac{3}{2}\sigma_Z(\kappa(t)).
    \end{align*}
\end{enumerate}
\end{thm}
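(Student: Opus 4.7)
The plan is to follow the same time-rescaling strategy used in the proof of Theorem~\ref{thm:equivalence-shbf-savd}, simply with a different scaling and with the extra bookkeeping needed for the auxiliary process $H(\cdot)$ (respectively $K(\cdot)$) that represents the time derivative of $V$ along the trajectory. The two directions (i) and (ii) are mirror images of each other, so I would write (i) in detail and obtain (ii) by inspection, observing that $\tau:[s_0,+\infty)\to[t_0,+\infty)$ and $\kappa:[t_0,+\infty)\to[s_0,+\infty)$ are $C^\infty$-bijections with $\kappa=\tau^{-1}$, so that inverting the change of variables goes through verbatim.

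For statement (i), given a trajectory solution $(Y,X)$ of \eqref{eq:SHBF-in-thm}, I would define $Z(s):=Y(\tau(s))$ and $Q(s):=\dot{\tau}(s)X(\tau(s))=\frac{\alpha}{2s}X(\tau(s))$ and differentiate via It\^o's chain rule, exactly as in the proof of Theorem~\ref{thm:equivalence-shbf-savd}. The first identity $dZ(s)=Q(s)\,ds$ is immediate. For $K(\cdot)$, since $dV(Y(t))=H(t)\,dt$ by assumption, the chain rule yields $dV(Z(s))=\dot{\tau}(s)H(\tau(s))\,ds$, so one naturally sets $K(s):=\dot{\tau}(s)H(\tau(s))=\frac{\alpha}{2s}H(\tau(s))$, and $K(\cdot)$ is continuous and adapted whenever $H(\cdot)$ is. The substantial computation is for $dQ(s)$: applying the product/chain rule gives $dQ(s)=\ddot{\tau}(s)X(\tau(s))\,ds+\dot{\tau}(s)\,d(X(\tau(s)))$, and the time-transformation of the stochastic integral from~\cite[Theorem~8.5.7]{oksendal} produces the factor $\dot{\tau}(s)^{3/2}\sigma_Y(\tau(s))\,dW(s)$, confirming the stated diffusion term $\sigma_Z(s)=\bigl(\tfrac{\alpha}{2s}\bigr)^{3/2}\sigma_Y(\tau(s))$.

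The heart of the argument is then algebraic: I need to verify that the drift coefficients collapse to the precise form appearing in \eqref{eq:SFOGDA-in-thm}. With $\dot{\tau}(s)=\frac{\alpha}{2s}$, $\ddot{\tau}(s)=-\frac{\alpha}{2s^2}$ and $\exp\!\bigl(\tfrac{2(\tau(s)-t_0)}{\alpha}\bigr)=\tfrac{s}{s_0}$, I would check three matchings:
\begin{align*}
\frac{\ddot{\tau}(s)}{\dot{\tau}(s)}-\frac{2(\alpha-1)}{\alpha}\dot{\tau}(s) &= -\frac{1}{s}-\frac{\alpha-1}{s} = -\frac{\alpha}{s},\\
\dot{\tau}(s)^2\cdot\frac{2\beta s_0}{\alpha}\exp\!\bigl(\tfrac{2(\tau(s)-t_0)}{\alpha}\bigr) &= \frac{\alpha^2}{4s^2}\cdot\frac{2\beta s}{\alpha} = \frac{\alpha\beta}{2s},\\
\frac{\alpha\beta}{2s}\cdot H(\tau(s)) &= \beta\cdot\frac{\alpha}{2s}H(\tau(s)) = \beta K(s),
\end{align*}
which correspond respectively to the $-\frac{\alpha}{s}Q(s)$, $-\frac{\alpha\beta}{2s}V(Z(s))$, and $-\beta K(s)$ terms in \eqref{eq:SFOGDA-in-thm}. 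Matching the initial conditions is then a one-line check.

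The main obstacle I anticipate is not the algebra, which is routine once the time change is fixed, but rather ensuring that the framework of solutions is respected. Concretely, I must certify that the auxiliary process $K(\cdot)$ defined from $H(\cdot)$ still satisfies the required continuity/adaptedness, that the time-changed Brownian integral is legitimate (which is exactly the content of \cite[Theorem~8.5.7]{oksendal} and requires $\tau$ to be an absolutely continuous, strictly increasing time change of the filtration, both of which hold here since $\tau$ is $C^\infty$ with strictly positive derivative), and that conditions $(i)$--$(ii)$ in Definition~\ref{def:strong-solution} are met. Once these are in place, statement (ii) follows by running the same computation in reverse with $\kappa=\tau^{-1}$; the identities $\dot{\kappa}(t)=\frac{2s_0}{\alpha}\exp\!\bigl(\tfrac{2(t-t_0)}{\alpha}\bigr)$ and $\ddot{\kappa}(t)=\frac{2}{\alpha}\dot{\kappa}(t)$ produce the coefficient $-\frac{2(\alpha-1)}{\alpha}$ on $X(t)$ and the coefficient $\frac{2\beta s_0}{\alpha}\exp\!\bigl(\tfrac{2(t-t_0)}{\alpha}\bigr)$ on both $H(t)$ and $V(Y(t))$, exactly as in \eqref{eq:SHBF-in-thm}.
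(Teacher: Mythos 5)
Your proposal is correct and follows essentially the same route as the paper's proof: the same substitutions $Z(s)=Y(\tau(s))$, $Q(s)=\dot{\tau}(s)X(\tau(s))$, $K(s)=\dot{\tau}(s)H(\tau(s))$, the same appeal to \cite[Theorem 8.5.7]{oksendal} for the time-changed stochastic integral, and the same three coefficient matchings (your algebra checks out, including the reverse direction via $\kappa=\tau^{-1}$). The only difference is cosmetic: you state the drift identities as standalone checks rather than carrying them through the full $dQ(s)$ expansion, and your attention to the adaptedness/continuity of $K(\cdot)$ is, if anything, slightly more explicit than the paper's.
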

\begin{proof}
(i) Let $(Y,X)$ be a trajectory solution of~\eqref{eq:SHBF-in-thm}. We denote
    \begin{align*}
       \lambda := \frac{2(\alpha-1)}{\alpha} \ \mbox{and} \ \mu(t) := \frac{2\beta s_0}{\alpha}\exp\left(\frac{2(t-t_0)}{\alpha}\right).
    \end{align*}
For $Z(s):= Y(\tau(s))$, $Q(s) := \frac{\alpha}{2s} X(\tau(s)) = \dot{\tau}(s)X(\tau(s))$, and $K(s):=\dot{\tau}(s)H(\tau(s))$, it yields, for every $s \geq s_0$, that
    \begin{align*}
        dZ(s) = dY(\tau(s))\dot{\tau}(s)ds = X(\tau(s))\dot{\tau}(s)ds = Q(s)ds,
    \end{align*}
    and
    \begin{align*}
dQ(s) = & \ \ddot{\tau}(s)X(\tau(s))ds + \dot{\tau}(s)d(X(\tau(s)))\\
= & \ \ddot{\tau}(s) X(\tau(s))ds + (\dot{\tau}(s))^2\left(-\lambda X(\tau(s)) - \mu(\tau(s))H(\tau(s)) - \mu(\tau(s))V(Y(\tau(s)))\right)ds\\ 
& \ + \dot{\tau}(s)\sigma_Y(\tau(s))dW(\tau(s))\\
= & \ \ddot{\tau}(s)X(\tau(s))ds + (\dot{\tau}(s))^2(-\lambda X(\tau(s)) - \mu(\tau(s))H(\tau(s)) - \mu(\tau(s))V(Y(\tau(s))))ds\\
& \ + (\dot{\tau}(s))^\frac{3}{2}\sigma_Y(\tau(s)) dW(s)\\ 
= & \ \left(\frac{\ddot{\tau}(s)}{\dot{\tau}(s)}Q(s) - \lambda\dot{\tau}(s)Q(s)-\dot{\tau}(s)\mu(\tau(s))K(s) - (\dot{\tau}(s))^2\mu(\tau(s))V(Z(s))\right)ds\\
        & \ + (\dot{\tau}(s))^\frac{3}{2}\sigma_Y(\tau(s))dW(s),
    \end{align*}
where we used~\cite[Theorem 8.5.7]{oksendal} to justify the time transformation of the term integrated with respect to the Brownian motion. Since, for every $s\geq s_0$,
    \begin{align*}
        \frac{\ddot{\tau}(s)}{\dot{\tau}(s)} - \lambda\dot{\tau}(s) = \frac{1}{s} + \frac{\alpha-1}{s} = -\frac{\alpha}{s},
    \end{align*}
    and
\begin{align*}
        \dot{\tau}(s)\mu(\tau(s)) = \frac{\alpha}{2s}\frac{2\beta s_0}{\alpha}\exp\left(\frac{2\left(\frac{\alpha}{2}\log\left(\frac{s}{s_0}\right) + t_0 - t_0\right)}{\alpha}\right) =  \beta
    \end{align*}
as well as
    \begin{align*}
        (\dot{\tau}(s))^2\mu(\tau(s)) = \beta\dot{\tau}(s) = \frac{\beta\alpha}{2s},
    \end{align*}
we obtain that, for every $s \geq s_0$,
    \begin{align*}
        dQ(s) = \left(-\frac{\alpha}{s}Q(s) - \beta K(s) - \frac{\alpha}{2s}\beta V(Z(s))\right)ds + \sigma_Z(s) dW(s).
    \end{align*}
The continuity property is obviously inherited, therefore, $(Z,Q)$ is a trajectory solution of~\eqref{eq:SFOGDA-in-thm}.
    
(ii) Conversely, let $(Z,Q)$ be a trajectory solution of~\eqref{eq:SFOGDA-in-thm}. It is clear that $\tau:[s_0,+\infty)\rightarrow [t_0,+\infty)$ and $\kappa: [t_0,+\infty)\rightarrow [s_0,+\infty)$ are bijections and mutually inverse. 

For $Y(t):= Z(\kappa(t))$, $X(t):= \frac{2s_0}{\alpha}\exp\left(\frac{2(t-t_0)}{\alpha}\right)Q(\kappa(t)) = \dot{\kappa}(t)Q(\kappa(t))$ and $H(t):=\dot{\kappa}(t)K(\kappa(t))$, it yields, for every $t \geq t_0$, that
\begin{align*}
dY(t) = dZ(\kappa(t))\dot{\kappa}(t)dt = Q(\kappa(t))\dot{\kappa}(t)dt = X(t)dt,
\end{align*}
and
\begin{align*}
        dX(t) = & \ \ddot{\kappa}(t) Q(\kappa(t)) + \dot{\kappa}(t)d(Q(\kappa(t)))\\
        = & \ \ddot{\kappa}(t)Q(\kappa(t)) + (\dot{\kappa}(t))^2\left(-\frac{\alpha}{\kappa(t)}Q(\kappa(t)) - \beta K(\kappa(t)) - \frac{\alpha}{2\kappa(t)}\beta V(Z(\kappa(t)))\right)dt + (\dot{\kappa}(t))^\frac{3}{2}\sigma_Z(\kappa(t))dW(t)\\
        = & \ \left(\frac{\ddot{\kappa}(t)}{\dot{\kappa}(t)}X(t) - \frac{\alpha\dot{\kappa}(t)}{\kappa(t)}X(t) - \beta\dot{\kappa}(t)H(t) - \frac{\alpha\dot{\kappa}(t)^2}{2\kappa(t)}\beta V(Y(t))\right)dt +\sigma_Y(t)dW(t)\\
        = & \ \left(-\frac{2(\alpha-1)}{\alpha}X(t) - \frac{2\beta s_0}{\alpha}\exp\left(\frac{2(t-t_0)}{\alpha}\right)H(t) - \frac{2\beta s_0}{\alpha}\exp\left(\frac{2(t-t_0)}{\alpha}\right) V(Y(t))\right)dt + \sigma_Y(t)dW(t).
    \end{align*}
In this case as well, the continuity property is inherited, thus $(Y,X)$ is a trajectory solution of~\eqref{eq:SHBF-in-thm}.
\end{proof}

In the next theorem, we transfer the long-time statements for ~\eqref{eq:SHBF-in-thm}, as provided by Theorem \ref{thm:o-convergence-rates-operator}, to \eqref{eq:SFOGDA-in-thm}.

\begin{thm}\label{thm:rates-SFOGDA}
Let $\alpha > 2, \beta>0$, let $s \mapsto s \sigma_Z(s)$ be square integrable, and let $(Z,Q)\in S_{\mathcal{H}\times\mathcal{H}}^2$ be a trajectory solution of~\eqref{eq:SFOGDA-in-thm}, and let $z^*$ be a zero of $V$. Then, the following statements are true:
\begin{enumerate}[(i)]
    \item $\|V(Z(s))\| = o\left(\frac{1}{s}\right), \ \langle Z(s)-z^*, V(Z(s))\rangle = o\left(\frac{1}{s}\right)$, and $\|Q(s)\| = o\left(\frac{1}{s}\right)$ as $s\rightarrow +\infty$ a.s.;
    \item $\mathbb{E}(\|V(Z(s))\|^2) = \mathcal{O}\left(\frac{1}{s^2}\right), \ \mathbb{E}(\langle Z(s) - z^*, V(Z(s))\rangle) = \mathcal{O}\left(\frac{1}{s}\right)$, and $\mathbb{E}(\|Q(s)\|^2) = \mathcal{O}\left(\frac{1}{s^2}\right)$ as $s\rightarrow +\infty$;
    \item $Z(s)$ converges weakly to a zero of $V$ as $s\rightarrow +\infty$ a.s.
\end{enumerate}
\end{thm}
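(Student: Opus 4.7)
The plan is to mirror the strategy used in the proof of Theorem~\ref{thm:convergence-savd} in the optimization setting: rather than carrying out a direct energy-dissipativity analysis on \eqref{eq:SFOGDA-in-thm}, I would transfer the asymptotic statements of Theorem~\ref{thm:o-convergence-rates-operator} through the time change of variables established in Theorem~\ref{thm:transfer-results-to-SFOGDA-op}(ii). Given a trajectory solution $(Z,Q) \in S^2_{\mathcal{H}\times\mathcal{H}}$ of \eqref{eq:SFOGDA-in-thm}, I set $\kappa(t) := s_0 \exp(2(t-t_0)/\alpha)$, $Y(t) := Z(\kappa(t))$, $X(t) := \dot{\kappa}(t)Q(\kappa(t))$ and $\sigma_Y(t):=(\dot{\kappa}(t))^{3/2}\sigma_Z(\kappa(t))$; Theorem~\ref{thm:transfer-results-to-SFOGDA-op}(ii) then identifies $(Y,X)$ as a trajectory solution of \eqref{eq:SHBF-in-thm}, i.e.\ of \eqref{eq:SHBF-op} with parameters $\lambda = \tfrac{2(\alpha-1)}{\alpha}$ and $\mu(t) = \gamma(t) = \tfrac{2\beta s_0}{\alpha}\exp(2(t-t_0)/\alpha)$. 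Since $\dot{\kappa}$ grows exponentially and hence $\kappa, \tau = \kappa^{-1}$ map infinity to infinity, properties for $(Y,X)$ as $t \to +\infty$ will translate to properties for $(Z,Q)$ as $s \to +\infty$.

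The main obstacle, as in the optimization case, is verifying the structural assumption \eqref{eq:assumptions-convergence-operator} for this specific choice of $\mu, \gamma, \lambda$. With $\gamma(t)/\mu(t) \equiv 1$ one has $\ell = 1$, and $\dot{\mu}(t)/\gamma(t) = \dot{\mu}(t)/\mu(t) = 2/\alpha$. The three conditions reduce to $2/\alpha < 1$ and $2\lambda - 3 + 2/\alpha = \tfrac{4(\alpha-1)}{\alpha} - 3 + \tfrac{2}{\alpha} = \tfrac{\alpha-2}{\alpha} > 0$, both of which amount exactly to $\alpha > 2$. This explains why the restriction $\alpha > 2$ appears in the statement, in perfect analogy with the Su-Boyd-Cand\`es restriction $\alpha > 3$ in Theorem~\ref{thm:convergence-savd}. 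The square integrability of $\sigma_Y$ then follows from the substitution $t = \tau(s)$, which together with $\dot{\kappa}(\tau(s)) = 2s/\alpha$ reduces $\int_{t_0}^{+\infty}\|\sigma_Y(t)\|_{HS}^2 dt$ to a constant multiple of $\int_{s_0}^{+\infty} s^2 \|\sigma_Z(s)\|_{HS}^2 ds$, which is finite by the hypothesis that $s \mapsto s\,\sigma_Z(s)$ is square integrable.

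Once the hypotheses of Theorem~\ref{thm:o-convergence-rates-operator} are verified, the conclusions are transferred using the identities $\mu(\tau(s)) = \tfrac{2\beta}{\alpha} s$ and $X(\tau(s)) = \dot{\kappa}(\tau(s)) Q(s) = \tfrac{2s}{\alpha} Q(s)$. For a zero $z^* \in \zer V$, Theorem~\ref{thm:o-convergence-rates-operator}(i) yields $\|V(Z(s))\| = \|V(Y(\tau(s)))\| = o(1/\mu(\tau(s))) = o(1/s)$, $\langle Z(s) - z^*, V(Z(s)) \rangle = o(1/s)$ and $\|Q(s)\| = \tfrac{\alpha}{2s}\|X(\tau(s))\| = o(1/s)$ almost surely, giving (i). Statement (ii) follows analogously from Theorem~\ref{thm:o-convergence-rates-operator}(ii), where the extra factor $1/s^2$ for $\mathbb{E}(\|Q(s)\|^2)$ arises from squaring the relation $\|Q(s)\| = \tfrac{\alpha}{2s}\|X(\tau(s))\|$ and using $\mathbb{E}(\|X(\tau(s))\|^2) = \mathcal{O}(1)$. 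Finally, Theorem~\ref{thm:o-convergence-rates-operator}(iii) yields the almost sure weak convergence of $Y(\tau(s)) = Z(s)$ to a zero of $V$, which is precisely statement (iii).
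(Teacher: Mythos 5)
Your proposal is correct and follows essentially the same route as the paper: transfer via the time rescaling of Theorem~\ref{thm:transfer-results-to-SFOGDA-op}(ii), verification of assumption~\eqref{eq:assumptions-convergence-operator} (reducing to $\alpha>2$) and of the square integrability of $\sigma_Y$ by the substitution $t=\tau(s)$, and then translation of the rates of Theorem~\ref{thm:o-convergence-rates-operator} using $\mu(\tau(s))=\tfrac{2\beta}{\alpha}s$ and $\|Q(s)\|=\tfrac{\alpha}{2s}\|X(\tau(s))\|$. The computations and conclusions match the paper's proof in all essential respects.
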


\begin{proof}
Let $(Z,Q)\in S_{\mathcal{H}\times\mathcal{H}}^2$ be a trajectory solution of~\eqref{eq:SFOGDA-in-thm} and $t_0\geq 0$. Consistently with Theorem~\ref{thm:transfer-results-to-SFOGDA-op}, we define
    \begin{align*}
        Y(t) := Z(\kappa(t)) \quad \mbox{and} \ X(t) := \frac{2}{\alpha}\kappa(t)Q(\kappa(t)),
    \end{align*}
    where $\kappa:[t_0,+\infty)\rightarrow[s_0,+\infty)$ is given by $\kappa(t) := s_0\exp\left(\frac{2(t-t_0)}{\alpha}\right)$. It holds $\lim_{t\rightarrow +\infty} \kappa(t) = +\infty$. Then, $(Y,X)\in S_{\mathcal{H}\times\mathcal{H}}^2$ is a trajectory solution of~\eqref{eq:SHBF-op} for
    \begin{align*}
        \lambda := \frac{2(\alpha-1)}{\alpha}, \ \mu(t)=\gamma(t):= \frac{2\beta s_0}{\alpha}\exp\left(\frac{2(t-t_0)}{\alpha}\right) \ \mbox{and} \ \sigma_Y(t) := \left(\frac{2s_0}{\alpha}\exp\left(\frac{2(t-t_0)}{\alpha}\right)\right)^\frac{3}{2}\sigma_Z(\kappa(t)) \ \forall t \geq t_0.
    \end{align*}
   
Notice that
    \begin{align*}
        \frac{\dot{\mu}(t)}{\gamma(t)} = \frac{\dot{\mu}(t)}{\mu(t)} = \frac{\left(\frac{\lambda s_0}{\alpha-1}\right)^2\frac{\lambda}{\alpha-1}\exp\left(\frac{\lambda(t-t_0)}{\alpha-1}\right)}{\left(\frac{\lambda s_0}{\alpha-1}\right)^2\exp\left(\frac{\lambda(t-t_0)}{\alpha-1}\right)} = \frac{\lambda}{\alpha-1} \quad \forall t \geq t_0.
    \end{align*}
Hence, assumption~\eqref{eq:assumptions-convergence-operator} is satisfied if and only if
    \begin{align*}
        \frac{\lambda}{\alpha-1}<1 \quad \mbox{and} \quad 2\lambda - 3 + \frac{\lambda}{\alpha-1} >0,
    \end{align*}
which is nothing else than
\begin{align*}
      \alpha >2.
    \end{align*}
This explains the more restrictive assumption $\alpha>2$, which is consistent with the one used in the deterministic setting for the Fast OGDA system.

It remains to show square integrability of $\sigma_Y$. We have
    \begin{align*}
        &\int_{t_0}^{+\infty} \|\sigma_Y(t)\|_{HS}^2 dt = \int_{s_0}^{+\infty} \dot{\kappa}(\tau(s))^3\dot{\tau}(s)\|\sigma_Z(s)\|_{HS}^2 ds
        \\= &  \int_{s_0}^{+\infty}\dot{\kappa}(\tau(s))^2\dot{(\kappa\circ\tau)}(s)\|\sigma_Z(s)\|^2ds = \int_{s_0}^{+\infty} \frac{4s_0^2}{\alpha^2}\frac{s^2}{s_0^2}\|\sigma_Z(s)\|^2ds = \frac{4}{\alpha^2}\int_{s_0}^{+\infty}s^2\|\sigma_Z(s)\|^2ds < +\infty,
    \end{align*}
    where $\tau = \kappa^{-1}:[s_0,+\infty)\rightarrow [t_0,+\infty)$, $\tau(s) = t_0 + \frac{\alpha}{2}\log\left(\frac{s}{s_0}\right)$. It holds $\lim_{s\rightarrow+\infty} \tau(s) = +\infty$. In conclusion, all hypotheses of Theorem~\ref{thm:o-convergence-rates-operator} are satisfied.
    
Recall that, for every $s \geq s_0$,
    \begin{align*}
        \mu(\tau(s)) = \mu\left(t_0 + \frac{\alpha}{2}\log\left(\frac{s}{s_0}\right) + t_0\right) = \frac{2\beta s}{\alpha},
    \end{align*}
so, according to Theorem~\ref{thm:o-convergence-rates-operator}(i), it holds
    \begin{align*}
        \|V(Z(s))\| = \|V(Y(\tau(s)))\| = o\left(\frac{1}{\mu(\tau(s))}\right) = o\left(\frac{1}{s}\right) \ \mbox{as} \ s\rightarrow +\infty \ \mbox{a.s.}
    \end{align*}
and
    \begin{align*}
\langle Y(\tau(s)) - z^*, V(Y(\tau(s)))\rangle = \langle Z(s) - z^*, V(Z(s))\rangle = o\left(\frac{1}{\mu(\tau(s))}\right) = o\left(\frac{1}{s}\right) \ \mbox{as} \ s\rightarrow +\infty \ \mbox{a.s.}
    \end{align*}
    Furthermore, it holds
    \begin{align*}
        \frac{2s}{\alpha}\|Q(s)\| = \|X(\tau(s))\| \rightarrow 0 \ \mbox{as} \ s\rightarrow +\infty \ \mbox{a.s.},
    \end{align*}
    which implies
    \begin{align*}
        \|Q(s)\| = o\left(\frac{1}{s}\right) \ \mbox{as} \ s\rightarrow +\infty \ \mbox{a.s.}.
    \end{align*}
The statements in (ii) follow analogously from Theorem~\ref{thm:o-convergence-rates-operator}(ii). Since $\tau(s)\rightarrow +\infty$ as $s\rightarrow +\infty$, 
we obtain that $Z(s) = Y(\tau(s))$ converges weakly to a zero of $V$ as $s \to +\infty$ a.s., which proves statement (iii).
\end{proof}

\begin{rmk}{\rm
The deterministic counterpart of \eqref{eq:SFOGDA} is the dynamical system
\begin{align*}
        \begin{cases}
            \ddot{Z}(s) + \frac{\alpha}{s}\dot{Z}(s) + \beta \frac{d}{ds}V(Z(s)) + \frac{\alpha \beta}{2s} V(Z(s)) = 0
            \\ Z(t_0) = Z_0 \ \mbox{and} \ \dot{Z}(t_0) = Q_0
        \end{cases}
    \end{align*}
which was introduced and studied in~\cite{bot-csetnek-nguyen} (in a slightly more general form) as the Fast Optimistic Gradient Descent Ascent (OGDA) dynamical system, in connection with solving monotone equations. The Fast OGDA dynamics achieve the best known convergence rates for monotone equations in the literature and reveal the surprising phenomenon that inertial dynamics with vanishing damping can have an accelerating effect beyond the setting of convex minimization. The same phenomenon has also been observed in~\cite{bot-csetnek-nguyen} for the explicit discretization of Fast OGDA, which yields an algorithm combining Nesterov momentum with a correction-term evaluation of the operator.
    }
\end{rmk}

\section*{Acknowledgements}
The authors would like to thank Julio Backhoff-Veraguas for fruitful discussions on various aspects of this work.

\renewcommand\thesection{\Alph{section}}
\renewcommand\thesubsection{\thesection.\Alph{subsection}}
\setcounter{section}{0}

\section{Appendix}

The following lemma plays a crucial in our long-time analysis.

\begin{lem}\label{lem:aux-convergence}
    Let $\lambda >0, t_0 \geq 0$ and $\phi:[t_0,+\infty) \rightarrow \R$ an It\^{o} process with $\phi(t) = \phi(t_0) + \int_{t_0}^{t} \psi(s) ds$ almost surely such that $\lim_{t\rightarrow +\infty} (\lambda\phi(t) + \psi(t)) = \ell \in \R$ almost surely. Then, $\lim_{t\rightarrow +\infty} \phi(t) = \frac{\ell}{\lambda}\in\R$ almost surely.
\end{lem}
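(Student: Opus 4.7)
The key idea is to use the integrating factor $e^{\lambda t}$ to turn the ``approximate first-order ODE'' $\psi(t) + \lambda\phi(t) \to \ell$ into an explicit integral representation of $\phi$, and then apply a standard asymptotic average argument. Since the statement is almost sure and all random objects are trajectorywise, I would fix $\omega$ in the full-measure event on which both the representation $\phi(t) = \phi(t_0) + \int_{t_0}^t \psi(s)\,ds$ and the limit $\lambda\phi(t) + \psi(t) \to \ell$ hold, and argue purely pathwise.

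First, since $\phi(\cdot)$ is absolutely continuous with $\dot\phi = \psi$ a.e., the product $e^{\lambda t}\phi(t)$ is absolutely continuous with derivative $e^{\lambda t}(\lambda\phi(t)+\psi(t))$ a.e. Integrating from $t_0$ to $t$ and multiplying by $e^{-\lambda t}$ yields the variation-of-constants formula
\begin{equation*}
\phi(t) = e^{-\lambda(t-t_0)}\phi(t_0) + \int_{t_0}^t e^{-\lambda(t-s)}\bigl(\lambda\phi(s)+\psi(s)\bigr)\,ds \qquad \forall t \geq t_0.
\end{equation*}
The first term decays exponentially to $0$. For the integral, I would use $\int_{t_0}^t e^{-\lambda(t-s)}\ell\,ds = \frac{\ell}{\lambda}\bigl(1-e^{-\lambda(t-t_0)}\bigr) \to \frac{\ell}{\lambda}$, which is exactly the target limit.

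It then suffices to show that the error $\int_{t_0}^t e^{-\lambda(t-s)}\bigl(\lambda\phi(s)+\psi(s)-\ell\bigr)\,ds$ tends to zero. Fixing $\varepsilon > 0$, I would choose $T \geq t_0$ so that $|\lambda\phi(s)+\psi(s)-\ell| < \varepsilon$ for all $s \geq T$. On $[t_0,T]$ the integrand $\lambda\phi+\psi-\ell$ is locally integrable (since $\psi \in L^1_{\text{loc}}$ and $\phi$ is continuous), and the factor $e^{-\lambda(t-s)}$ is bounded by $e^{-\lambda(t-T)}$, so this piece vanishes as $t \to +\infty$. On $[T,t]$ the pointwise bound $\varepsilon$ gives
\begin{equation*}
\left|\int_T^t e^{-\lambda(t-s)}\bigl(\lambda\phi(s)+\psi(s)-\ell\bigr)\,ds\right| \leq \varepsilon \int_T^t e^{-\lambda(t-s)}\,ds \leq \frac{\varepsilon}{\lambda}.
\end{equation*}

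Combining these estimates, $\limsup_{t\to+\infty}\bigl|\phi(t)-\tfrac{\ell}{\lambda}\bigr| \leq \tfrac{\varepsilon}{\lambda}$, and since $\varepsilon$ was arbitrary the conclusion follows. There is no genuinely hard step here -- the only mild care required is to ensure local integrability of $\psi$ (which comes for free from the assumed representation of $\phi$) so that the split of the integral at the ``cutoff'' $T$ is well defined. No stochastic calculus is needed, since the hypothesis reduces the lemma to a purely pathwise deterministic statement on a full-measure event.
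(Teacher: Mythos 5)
Your proof is correct and follows essentially the same route as the paper: both arguments fix $\omega$ on a full-measure event, multiply by the integrating factor $e^{\lambda t}$, cut off at a time $T_\varepsilon$ beyond which $|\lambda\phi+\psi-\ell|<\varepsilon$, and conclude $\limsup_{t\to+\infty}|\phi(t)-\ell/\lambda|\leq \varepsilon/\lambda$. The only cosmetic difference is that the paper phrases the integration step as an application of It\^{o}'s formula to $e^{\lambda t}(\phi-\ell/\lambda)$, whereas you observe directly that the statement is pathwise deterministic and use the variation-of-constants formula; the underlying computation is identical.
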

\begin{proof}
    Let $\Omega_1$ be the subset of $\Omega$ of measure $1$ such that for all $\omega\in\Omega_1$ it holds $\lim_{t\rightarrow+\infty} \lambda\phi(\omega,t) + \psi(\omega,t) = \ell(\omega)$ as well as $\phi(\omega, t) = \phi(\omega, t_0) + \int_{t_0}^{t}\psi(\omega, s)ds$ for all $t\geq t_0$. We fix $\omega\in\Omega_1$ and set $\zeta(\omega,t) := \phi(\omega,t) - \frac{\ell(\omega)}{\lambda}$ for all $t \geq t_0$.  Hence, $\lim_{t\rightarrow +\infty} (\lambda \zeta(\omega,t) + \psi(\omega,t)) = 0$, and thus, for $\varepsilon >0$, there exists $T_\varepsilon \geq t_0$ such that for all $t\geq T_\varepsilon$ it holds
    \begin{align*}
        |\lambda\zeta(\omega,t) + \psi(\omega,t)| <\varepsilon,
    \end{align*}
therefore,
    \begin{align*}
        |\exp(\lambda t) (\lambda\zeta(\omega,t) + \psi(\omega,t))| < \varepsilon \exp(\lambda t).
    \end{align*}
Denoting
    \begin{align*}
        F(\ell,\phi,t) := \exp(\lambda t)\left(\phi - \frac{\ell}{\lambda}\right),
    \end{align*}
    we have
    \begin{align*}
    \frac{d}{dt} F(\ell,\phi,t) = \lambda \exp(\lambda t)\left(\phi - \frac{\ell}{\lambda}\right), \quad \frac{d}{d\phi} F(\ell,\phi,t) = \exp(\lambda t), \quad \frac{d}{d\ell} F(\ell,\phi,t) = -\frac{1}{\lambda}\exp(\lambda t).
    \end{align*}
By It\^{o}'s formula, we have for all $t \geq t_0$
    \begin{align*}
        dF(\ell,\phi(t),t) = \lambda\exp(\lambda t) \left(\phi(t) - \frac{\ell}{\lambda}\right) dt + \exp(\lambda t) d\phi(t) = (\lambda\exp(\lambda t)\zeta(t) + \exp(\lambda t) \psi(t)) dt.  
\end{align*}
From here it follows that for all $t \geq T_\varepsilon$
    \begin{align*}
        |\exp(\lambda t)\zeta(\omega,t) - \exp(\lambda T_\varepsilon)\zeta(\omega, T_\varepsilon)| 
        = & \ \left| \int_{T_\varepsilon}^t (\lambda \exp(\lambda s)\zeta(\omega, s) + \exp(\lambda s)\psi(\omega,s)) ds\right|
        \\
        \leq & \ \varepsilon \int_{T_\varepsilon}^t \exp(\lambda s) ds = \frac{\varepsilon}{\lambda}(\exp(\lambda t) - \exp(\lambda T_\varepsilon)),
    \end{align*}
    hence
    \begin{align*}
        \exp(\lambda t)|\zeta(\omega,t)| &\leq \left| \exp(\lambda t)\zeta(\omega,t) - \exp(\lambda T_\varepsilon)\zeta(T_\varepsilon)\right| + \exp(\lambda T_\varepsilon)|\zeta(\omega, T_\varepsilon)|
        \\& < \frac{\varepsilon}{\lambda} (\exp(\lambda t) - \exp(\lambda T_\varepsilon)) + \exp(\lambda T_\varepsilon)|\zeta(\omega, T_\varepsilon)|,
    \end{align*}
    and thus
    \begin{align*}
        |\zeta(\omega,t)| < \frac{\varepsilon}{\lambda} + \exp(\lambda(T_\varepsilon - t))\left(|\zeta(\omega,T_\varepsilon)| - \frac{\varepsilon}{\lambda}\right).
    \end{align*}
From here it yields
    \begin{align*}
        0\leq \limsup_{t\rightarrow +\infty} |\zeta(\omega,t)| \leq \frac{\varepsilon}{\lambda},
    \end{align*}
which completes the proof, since $\varepsilon>0$ and $\omega\in\Omega_1$ were chosen arbitrarily.
\end{proof}

Next, consider the general formulation of a stochastic differential equation on $[t_0, +\infty]$, where $t_0 \geq 0$,
    \begin{align*}
    \tag{SDEGEN}
    \label{SDE-gen}
        \begin{cases}
            dX(t)= F(t,X(t)) dt + G(t,X(t)) dW(t) 
            \\ X(t_0)=X_0 \in \mathcal{H},
        \end{cases}
    \end{align*}
defined on a filtered probability space $(\Omega,\mathcal{F},\{\mathcal{F}_t\}_{t\geq t_0},\mathbb{P})$, where $W$ denotes a $\mathcal{H}$-valued Brownian motion, $F: [t_0, +\infty) \times\mathcal{H} \to \mathcal{H}$ and $G: [t_0, +\infty) \times\mathcal{H} \to {\mathcal L}(\mathcal{H}, \mathcal{H})$. We equip ${\mathcal L}(\mathcal{H}, \mathcal{H})$ with the Hilbert-Schmidt inner product and corresponding norm $\|\cdot\|_{HS}$.

In the following definition, we introduce the notion of a solution that we consider for \eqref{SDE-gen}.

\begin{defi}\label{def:strong-solution}(\cite[Definition 3.1]{gawarecki-mandrekar})
    A stochastic process $X : \Omega \times [t_0, +\infty) \to \mathcal{H} $  is a strong solution of~\eqref{SDE-gen} if 
    \begin{enumerate}[(i)]
        \item $X\in C([t_0,+\infty),\mathcal{H})$ almost surely;
        \item for every $t \geq t_0$ it holds
        \begin{align*}
            \int_{t_0}^t \big(\|F(s,X(s))\| + \|G(s,X(s))\|_{HS}^2\big) ds < +\infty \quad \mbox{almost surely};
        \end{align*}
        \item for every $t \geq t_0$ it holds
        \begin{align*}
            X(t) = X(t_0) + \int_{t_0}^t F(s,X(s)) ds + \int_{t_0}^t G(s,X(s))dW(s) \quad \mbox{almost surely}.
        \end{align*}
    \end{enumerate}
\end{defi}

Furthermore, we introduce a class of stochastic processes characterized by a key property governing the expectation of their norm raised to a given power.

 \begin{defi}\label{def21}
(i) A stochastic process $X:\Omega \times [t_0, +\infty) \rightarrow \mathcal{H}$ is called progressively measurable, if for every $T \geq t_0$ the mapping
        \begin{align*}
            &\Omega\times [t_0,T] \rightarrow \mathcal{H}, \quad (\omega,s) \mapsto X(\omega,s),
        \end{align*}
        is $\mathcal{F}_T \otimes\mathcal{B}([t_0,T])$-measurable, where $\otimes$ denotes the product $\sigma$-algebra and $\mathcal{B}$ is the Borel $\sigma$-algebra. Further, $X$ is called adapted if $X(\cdot,T)$ is $\mathcal{F}_T$-measurable for every $T\geq t_0$.
        \item For $T \geq t_0$, we define the quotient space as
        \begin{align*}
            S_\mathcal{H}^0[t_0,T]:=\left\{X:\Omega\times[t_0,T]\rightarrow\mathcal{H}: \text{$X$ is a progressively measurable continuous stochastic process}\right\}\Big/\mathcal{R},
        \end{align*}
     and set $S_\mathcal{H}^0:=\bigcap_{T\geq t_0} S_\mathcal{H}^0[t_0,T]$. 
     
(ii) For $\nu>0$ and $T \geq t_0$, we define $S_\mathcal{H}^\nu [t_0,T]$ as the following subset of stochastic processes in $S_\mathcal{H}^0 [t_0,T]$
        \begin{align*}
            S_\mathcal{H}^\nu [t_0,T] := \left\{X\in S_\mathcal{H}^0 [t_0,T]: \quad \mathbb{E}\left(\sup_{t\in[t_0,T]} \|X(t)\|^\nu\right) < +\infty \right\}.
        \end{align*}
        Finally, we set $S_\mathcal{H}^\nu := \bigcap_{T\geq t_0} S_\mathcal{H}^\nu[t_0,T]$.
\end{defi}
The theorem below ensures the existence and uniqueness of solutions to \eqref{SDE-gen} within a fixed compact interval.

\begin{thm}\label{thm:existence-uniqueness-solutions}
Consider the general stochastic differential equation \eqref{SDE-gen} and let $T \geq t_0$. Assume that  $F$ and $G$ are jointly measurable, i.e. $F:[t_0,T]\times \mathcal{H}\rightarrow \mathcal{H}$ and $G:[t_0,T]\times \mathcal{H}\rightarrow \mathcal{L}(\mathcal{H},\mathcal{H})$ are  $\mathcal{B}([t_0,T])\otimes\mathcal{B}(\mathcal{H})$-measurable, and also measurable with respect to the product $\sigma$-field $\mathcal{B}([t_0,t])\otimes \mathcal{B}(\mathcal{H})$ on $[t_0,t]\times\mathcal{H}$ for every $t_0\leq t\leq T$.

\begin{enumerate}[(i)]
\item\label{thm:item:existence-uniqueness-strong-solution} (\cite[Theorem 3.3]{gawarecki-mandrekar}) Assume that the following conditions hold:
    \begin{enumerate}[(1)]
        \item There exists a constant $\ell$ such that for every $X \in C([t_0,T],\mathcal{H})$ almost surely it holds
        \begin{align*}
            \|F(t,X(\omega,t))\| + \|G(t,X(\omega,t))\|_{HS} \leq \ell \left(1+\sup_{t_0\leq s\leq T} \|X(\omega, s)\| \right) \ \mbox{for every} \ t_0 \leq t \leq T \ \mbox{and} \ \omega\in\Omega.
        \end{align*}
        \item For every $X,Y \in C([t_0,T],\mathcal{H}), t_0 \leq t \leq T$ and $\omega\in\Omega$, there exists $K > 0$ such that
        \begin{align*}
            \|F(t,X(\omega,t)) - F(t,Y(\omega,t))\| + \|G(t,X(\omega,t))-G(t,Y(\omega,t))\|_{HS} \leq K\sup_{t_0\leq s\leq T} \|X(\omega, s) - Y(\omega, s)\|.
        \end{align*}
    \end{enumerate}
 Then, ~\eqref{SDE-gen} has a unique strong solution on $[t_0,T]$.
    \item(\cite[Theorem A.7]{soto-fadili-attouch}, \cite[Theorem 5.2.1]{oksendal})\label{thm:item:help-existence-uniqueness} Additionally, assume that there exists $C>0$ such that
    \begin{align*}
        \|F(t,x)-F(t,y)\| + \|G(t,x)-G(t,y)\|_{HS} \leq C \|x-y\| \quad \mbox{for every} \ x,y\in\mathcal{H} \ \mbox{and} \ t_0 \leq t \leq T.
    \end{align*}
 Then, the unique solution of the stochastic differential equation~\eqref{SDE-gen} on $[t_0, T]$ lies in $S_{\mathcal{H}}^{\nu}[t_0,T]$ for every $\nu\geq 2$.
\end{enumerate}
\end{thm}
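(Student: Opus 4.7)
The plan is to leverage the global Lipschitz hypothesis to derive a linear growth estimate for both $F$ and $G$, and then to combine a localization argument with the Burkholder--Davis--Gundy (BDG) inequality in Hilbert space and Gronwall's lemma to obtain the required moment bound. Existence and uniqueness are already provided by part (i), so only the $S_{\mathcal H}^{\nu}[t_0,T]$-integrability remains to be shown.

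First, I would observe that the assumption in (i)(1) applied to the constant process $X\equiv 0$ (which is trivially continuous) yields $\|F(t,0)\| + \|G(t,0)\|_{HS} \leq \ell$ for every $t\in[t_0,T]$; combining this with the new pointwise Lipschitz bound gives the linear growth estimate
\begin{align*}
\|F(t,x)\| + \|G(t,x)\|_{HS} \leq K_1 + K_2 \|x\| \quad \forall x\in\mathcal{H},\ \forall t\in[t_0,T],
\end{align*}
for constants $K_1,K_2>0$ depending only on $\ell$ and $C$. Next, I would introduce the stopping times $\tau_N := \inf\{t\geq t_0 : \|X(t)\|\geq N\}\wedge T$, which increase to $T$ almost surely as $N\to+\infty$ by the a.s.\ continuity of $X$, and work with the stopped process $X(\cdot\wedge \tau_N)$, whose supremum is bounded by $N\vee\|X_0\|$ and hence has finite $\nu$-th moment.

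Using the integral representation in Definition~\ref{def:strong-solution}(iii), I would apply the elementary inequality $(a+b+c)^{\nu}\leq 3^{\nu-1}(a^{\nu}+b^{\nu}+c^{\nu})$ and handle the three pieces separately. For the drift piece, H\"older's inequality gives
\begin{align*}
\mathbb{E}\left(\sup_{s\in[t_0,t]}\left\|\int_{t_0}^{s\wedge\tau_N}F(r,X(r))\,dr\right\|^{\nu}\right)\leq (T-t_0)^{\nu-1}\,\mathbb{E}\left(\int_{t_0}^{t\wedge\tau_N}\|F(r,X(r))\|^{\nu}\,dr\right),
\end{align*}
while the stochastic integral piece is controlled by the Hilbert-space BDG inequality,
\begin{align*}
\mathbb{E}\left(\sup_{s\in[t_0,t]}\left\|\int_{t_0}^{s\wedge\tau_N}G(r,X(r))\,dW(r)\right\|^{\nu}\right)\leq C_{\nu}\,\mathbb{E}\left(\left(\int_{t_0}^{t\wedge\tau_N}\|G(r,X(r))\|_{HS}^{2}\,dr\right)^{\nu/2}\right),
\end{align*}
and the right-hand side is further bounded, via H\"older, by $C_{\nu}(T-t_0)^{\nu/2-1}\mathbb{E}\bigl(\int_{t_0}^{t\wedge\tau_N}\|G(r,X(r))\|_{HS}^{\nu}\,dr\bigr)$. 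Inserting the linear growth estimate yields an inequality of the form
\begin{align*}
\phi_{N}(t):=\mathbb{E}\left(\sup_{s\in[t_0,t\wedge\tau_N]}\|X(s)\|^{\nu}\right)\leq A + B\int_{t_0}^{t}\phi_{N}(s)\,ds \quad \forall t\in[t_0,T],
\end{align*}
for constants $A,B>0$ depending on $\nu,T,K_1,K_2,\mathbb{E}(\|X_0\|^{\nu})$ but \emph{not} on $N$; here finiteness of $\phi_N(t)$ is where the localization pays off.

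Gronwall's inequality then gives $\phi_{N}(T)\leq A\,e^{B(T-t_0)}$ uniformly in $N$, and Fatou's lemma applied to $\tau_N\nearrow T$ delivers $\mathbb{E}\bigl(\sup_{t\in[t_0,T]}\|X(t)\|^{\nu}\bigr)<+\infty$, as desired. The main obstacle I anticipate is the careful bookkeeping around localization: one must verify that $\phi_N$ is \emph{a priori} finite before invoking Gronwall, and ensure that the Hilbert-space BDG inequality is being applied with the correct constant and the Hilbert--Schmidt norm on $G$. Once these measure-theoretic details are pinned down, the rest of the argument is a standard moment estimate.
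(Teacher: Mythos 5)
The paper gives no proof of this theorem: both parts are imported verbatim from the literature (part (i) from Gawarecki--Mandrekar, part (ii) from Maulen-Soto--Fadili--Attouch and {\O}ksendal), so there is no in-paper argument to compare against. Your sketch of part (ii) is the standard proof that those references use -- linear growth extracted from (i)(1) at $X\equiv 0$ plus the global Lipschitz bound, localization by the exit times $\tau_N$, the splitting $(a+b+c)^{\nu}\leq 3^{\nu-1}(a^{\nu}+b^{\nu}+c^{\nu})$, H\"older on the drift, the Hilbert-space Burkholder--Davis--Gundy inequality on the stochastic integral, Gronwall uniformly in $N$, and Fatou -- and it is correct as written; the a priori finiteness of $\phi_N$ is unproblematic here since $X_0$ is a deterministic element of $\mathcal{H}$. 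The only point worth pinning down if you were to write this out in full is the BDG step for an $\mathcal{H}$-valued Brownian motion: the quadratic variation of $\int G\,dW$ is controlled by $\int \|G\,Q^{1/2}\|_{HS}^2$ where $Q$ is the (trace-class) covariance of $W$, which is dominated by $\trace(Q)\int\|G\|_{HS}^2$, so your estimate goes through with an extra harmless constant.
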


Using standard extension arguments, we then obtain a unique solution of the system \eqref{SDE-gen} on $[t_0, +\infty)$ which also belongs to $S_{\mathcal{H}}^{\nu}$ for every $\nu \geq 2$.

Next we recall the It\^o formula.

\begin{prop}\label{prop:ito-formula}\cite[Theorem 2.9]{gawarecki-mandrekar}
Let $T \geq t_0$ be fixed. Let $X$ be a stochastic process given by
\begin{align*}
    X(t) = X_0 + \int_{t_0}^{t} F(s) ds + \int_{t_0}^{t} G(s) dW(s) \quad \forall t\in[t_0,T],
\end{align*}
defined, as before, on a filtered probability space $(\Omega,\mathcal{F},\{\mathcal{F}_t\}_{t_0 \leq t \leq T},\mathbb{P})$, where $W$ denotes a $\mathcal{H}$-valued Brownian motion, $F:[t_0,T]\rightarrow \mathcal{H}$ and $G:[t_0,T]\rightarrow \mathcal{L}(\mathcal{H},\mathcal{H})$. Moreover, we require that $X_0$ is an $\mathcal{F}_{t_0}$-measurable $\mathcal{H}$-valued random variable, $F(\cdot)$ is adapted such that
\begin{align*}
    \int_{t_0}^{T} \|F(t)\| dt < +\infty \ \mbox{a.s.},
\end{align*}
and $G(\cdot)$ is adapted such that
\begin{align*}
    \int_{t_0}^{T} \|G(t)\|^2_{HS} dt < +\infty \ \mbox{a.s.}
\end{align*}
Let $\phi:[t_0, +\infty) \times\mathcal{H} \rightarrow \R$ be such that $\phi(\cdot,x)\in\text{C}^1([t_0, +\infty))$ for every $x\in\mathcal{H}$ and $\phi(t,\cdot)\in\text{C}^2(\mathcal{H})$ for every $t\geq t_0$. Then, 
    \begin{align*}
        \Tilde{X}(t):=\phi(t,X(t))
    \end{align*}
is an It\^{o} process such that
    \begin{align*}
        d\Tilde{X}(t) = \frac{d}{dt} \phi(t,X(t)) dt &+ \left\langle \nabla_x \phi(t,X(t)), F(t)\right\rangle dt + \langle \nabla_x\phi(t,X(t)),G(t) dW(t)\rangle
        \\&+ \frac{1}{2} \trace \left(\nabla_{xx}^2\phi(t,X(t)) G(t)G(t)^*\right) dt \quad \forall t \geq t_0.
    \end{align*}
    Here, $G(t)^*$ denotes the adjoint operator of $G(t)$ and $\trace(\cdot)$ the trace of an operator $A \in {\cal L}(\mathcal{H}, \mathcal{H})$, defined by
    \begin{align*}
        \trace(A) = \sum_{k=1}^\infty \langle Ae_k, e_k\rangle,
    \end{align*}
    where $\{e_k\}_{k\in\N}$ denotes an orthonormal basis of the Hilbert space $\mathcal{H}$.
    Further, if, for every $t_0 \leq t \leq T$,
    \begin{align*}
        \mathbb{E}\left(\int_{t_0}^t \|G(s)^*\nabla_x\phi(s,X(s))\|^2 ds\right)<+\infty,
    \end{align*}
    then $t \mapsto \int_{t_0}^t\left\langle G(s)^*\nabla_x\phi(s,X(s)), dW(s) \right\rangle$ is, for every $t_0 \leq t \leq T$, a square integrable continuous martingale with expected value $0$.
\end{prop}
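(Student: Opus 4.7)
The plan is to prove the Itô formula by the standard strategy of localization plus a partition-refinement Taylor expansion, leveraging the martingale and isometry properties of the Hilbert-space-valued stochastic integral with respect to $W$. First I would introduce a sequence of stopping times
\[
\tau_n := \inf\left\{t \in [t_0,T] : \|X(t)\| + \int_{t_0}^{t} \|F(s)\|\, ds + \int_{t_0}^{t} \|G(s)\|_{HS}^{2}\, ds \geq n\right\} \wedge T,
\]
so that $\tau_n \uparrow T$ almost surely. On each stopped interval $[t_0, \tau_n]$ the process $X$ takes values in a bounded set and the integrands $F$, $G$ lie in the appropriate $L^{1}$ and $L^{2}$ spaces deterministically, which lets us bound uniformly the continuous derivatives of $\phi$ evaluated along $X$. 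A standard monotone convergence argument transfers the identity from $[t_0,\tau_n]$ back to $[t_0, t]$ at the end, so it is enough to establish the formula under a uniform boundedness assumption.

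Second, for a partition $t_0 = s_0^{(N)} < s_1^{(N)} < \cdots < s_{N}^{(N)} = t$ of $[t_0,t]$ with mesh tending to zero, I would apply a second-order Taylor expansion of $\phi$ in both arguments at each node, writing
\[
\phi(s_{k+1}, X(s_{k+1})) - \phi(s_k, X(s_k)) = \tfrac{d}{dt}\phi(s_k, X(s_k)) \Delta s_k + \langle \nabla_x \phi(s_k, X(s_k)), \Delta X_k\rangle + \tfrac{1}{2}\langle \nabla_{xx}^{2}\phi(s_k, X(s_k))\Delta X_k, \Delta X_k\rangle + R_k,
\]
where $\Delta s_k := s_{k+1}-s_k$, $\Delta X_k := X(s_{k+1})-X(s_k)$, and $R_k$ is a remainder of higher order in $\Delta s_k$ and $\Delta X_k$. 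Summing over $k$ and telescoping gives $\phi(t, X(t)) - \phi(t_0, X_0)$ on the left. The time-derivative sum converges to $\int_{t_0}^{t} \tfrac{d}{dt}\phi(s, X(s))\, ds$ by continuity. Splitting $\Delta X_k = \int_{s_k}^{s_{k+1}} F(s)\, ds + \int_{s_k}^{s_{k+1}} G(s)\, dW(s)$ in the first-order inner-product term, one recovers $\int_{t_0}^{t} \langle \nabla_x \phi(s,X(s)), F(s)\rangle\, ds$ from the drift part and the stochastic integral $\int_{t_0}^{t} \langle \nabla_x \phi(s,X(s)), G(s)\, dW(s)\rangle$ from the diffusion part, via the usual approximation of adapted integrands by simple processes together with the Itô isometry.

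The main obstacle is the second-order term: one must show that
\[
\sum_{k=0}^{N-1} \langle \nabla_{xx}^{2}\phi(s_k, X(s_k))\Delta X_k, \Delta X_k\rangle \longrightarrow \int_{t_0}^{t} \trace\left(\nabla_{xx}^{2}\phi(s, X(s)) G(s) G(s)^{*}\right) ds
\]
in probability as the mesh vanishes. After localization, the contribution of the drift part of $\Delta X_k$ is uniformly $O(\Delta s_k)$ and vanishes in the sum. The diffusion contribution is handled by expanding along an orthonormal basis $\{e_j\}$ of $\mathcal{H}$ and using the fact that, for a Hilbert-Schmidt operator $A$, $\sum_j \langle A G(s_k) \Delta W_k, G(s_k)\Delta W_k \cdot e_j\rangle$ has conditional expectation equal to $\trace(A G(s_k) G(s_k)^{*})\Delta s_k$ plus a martingale increment whose $L^{2}$-norm is controlled by $\|G(s_k)\|_{HS}^{2}\Delta s_k$. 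Summing and passing to the limit gives the trace integral, while the remainder $\sum_k R_k$ vanishes in probability by the uniform continuity of $\nabla_{xx}^{2}\phi$ together with $\sum_k \|\Delta X_k\|^{2} = O(1)$ in probability. The Hilbert-Schmidt hypothesis on $G$ is exactly what ensures that all relevant traces are finite and the basis expansions converge.

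Finally, to obtain the martingale statement under the additional square-integrability assumption
\[
\mathbb{E}\left(\int_{t_0}^{t} \|G(s)^{*}\nabla_x\phi(s,X(s))\|^{2}\, ds\right) < +\infty,
\]
I would observe that the integrand $s \mapsto G(s)^{*}\nabla_x\phi(s,X(s))$ is progressively measurable (as a composition of measurable maps with the continuous adapted process $X$) and belongs to $L^{2}(\Omega \times [t_0,t])$ by hypothesis. This is precisely the class for which the construction of the Hilbert-space-valued Itô integral produces a square-integrable, continuous martingale with mean zero, so the assertion follows directly from the general theory underpinning the stochastic integral used throughout the paper.
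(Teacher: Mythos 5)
This proposition is not proved in the paper at all: it is imported verbatim from the literature, namely \cite[Theorem 2.9]{gawarecki-mandrekar}, and the appendix only records the statement for later use. Your proposal therefore does not parallel anything in the paper; what you have written is a reconstruction of the standard proof of the It\^{o} formula (localization by stopping times, telescoping second-order Taylor expansion along partitions, identification of the quadratic-variation term with the trace integral, and the $L^2$-isometry argument for the martingale claim), which is essentially the argument carried out in the cited reference. As an outline of that classical proof, the architecture is sound, and the final paragraph on the martingale property is correct as stated: progressive measurability plus the $L^2(\Omega\times[t_0,t])$ bound is exactly the class for which the Hilbert-space It\^{o} integral is a square-integrable continuous mean-zero martingale.

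There is, however, one genuine gap specific to the infinite-dimensional setting that your localization step glosses over. You stop $X$ upon exiting a ball of radius $n$ and claim this ``lets us bound uniformly the continuous derivatives of $\phi$ evaluated along $X$.'' In an infinite-dimensional Hilbert space $\mathcal{H}$, closed bounded sets are not compact, so a merely continuous $\nabla_x\phi$ or $\nabla^2_{xx}\phi$ need not be bounded, let alone uniformly continuous, on the stopped range of $X$; both properties are used later when you control the remainder $\sum_k R_k$ via ``uniform continuity of $\nabla^2_{xx}\phi$'' and when you pass to the limit in the drift and trace terms. To close this, you must either invoke the pathwise compactness of $\{X(\omega,s): s\in[t_0,\tau_n(\omega)]\}$ (a continuous image of a compact interval) and run the entire limiting argument in probability with $\omega$-dependent moduli of continuity, or impose local boundedness/uniform continuity hypotheses on the derivatives of $\phi$ on bounded sets, which is in fact what the cited theorem does. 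Relatedly, the displayed expression in your quadratic-variation step, $\sum_j \langle A G(s_k)\Delta W_k, G(s_k)\Delta W_k\cdot e_j\rangle$, is not well formed; the correct computation is that the conditional expectation of $\langle A\,G(s_k)\Delta W_k, G(s_k)\Delta W_k\rangle$ given $\mathcal{F}_{s_k}$ equals $\trace\left(A\,G(s_k)G(s_k)^*\right)\Delta s_k$, with the Hilbert--Schmidt hypothesis guaranteeing finiteness of the trace. With these two points repaired, the sketch matches the standard proof underlying the cited result.
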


The following theorem is central to the asymptotic analysis.

\begin{thm}(\cite[Theorem 3.9]{mao})\label{thm:A.9-in-paper}
    Let $A: [t_0, +\infty) \to \mathcal{H}$ and $U : [t_0, +\infty) \to \mathcal{H}$ be two continuous adapted increasing processes with $A(t_0)=U(t_0)=0$ almost surely. Let $N: [t_0, +\infty] \to \mathcal{H}$ be a real-valued continuous local martingale with $N(t_0)=0$ almost surely and $\xi$ be a nonnegative $\mathcal{F}_0$-measurable random variable. Define
    \begin{align*}
        X(t):=\xi + A(t) - U(t) + N(t) \quad \forall t\geq t_0.
    \end{align*}
    If $X(\cdot)$ is nonnegative and $\lim_{t\rightarrow +\infty}{A(t)} < +\infty$ almost surely, then the limits $\lim_{t\rightarrow +\infty}{X(t)} <+\infty$ and $\lim_{t\rightarrow +\infty}{U(t)} < +\infty$ exist almost surely.
\end{thm}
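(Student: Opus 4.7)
The plan is to reduce the statement to Doob's submartingale convergence theorem via a localization argument. The key reformulation is to consider the auxiliary process
\begin{align*}
    Y(t) := X(t) + U(t) = \xi + A(t) + N(t) \quad \forall t \geq t_0,
\end{align*}
which is nonnegative as the sum of two nonnegative processes, and is a continuous local submartingale, because $A$ is a continuous adapted increasing process and $N$ is a continuous local martingale starting at $0$. Replacing the pair $(X,U)$ by $Y$ removes the sign-indefinite contribution $-U$ and exposes the classical martingale-type structure we need.

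Next I would perform a triple localization. Define the stopping times
\begin{align*}
    \tau_k := \inf\{t \geq t_0 : A(t) \geq k\},
\end{align*}
so that $A^{\tau_k} \leq k$; choose a reducing sequence $\{\sigma_n\}_{n \in \N}$ with $\sigma_n \uparrow +\infty$ a.s. such that $N^{\sigma_n}$ is a uniformly integrable martingale; and introduce the $\mathcal{F}_{t_0}$-measurable events $E_k := \{\xi \leq k\}$. Setting $\rho_{k,n} := \tau_k \wedge \sigma_n$, stopping preserves both the submartingale property and the mean-zero property of $N^{\sigma_n}$, and the $\mathcal{F}_{t_0}$-measurability of $\mathbf{1}_{E_k}$ then yields
\begin{align*}
    \sup_{t \geq t_0} \mathbb{E}\bigl( Y^{\rho_{k,n}}(t) \mathbf{1}_{E_k} \bigr) \leq k + k \, \mathbb{P}(E_k) < +\infty.
\end{align*}
Hence $Y^{\rho_{k,n}} \mathbf{1}_{E_k}$ is a nonnegative submartingale bounded in $L^1$, and Doob's convergence theorem guarantees that it admits an almost sure finite limit as $t \to +\infty$.

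Passing first $n \to +\infty$ (so that $\rho_{k,n} \to \tau_k$ a.s., and by continuity of $Y$ the limit of $Y^{\tau_k}$ exists a.s.\ on $E_k$) and then $k \to +\infty$ (using that $\{\tau_k = +\infty\}$ increases to $\{\lim_{t\to+\infty} A(t) < +\infty\}$, which has full measure by hypothesis, and that $E_k \uparrow \Omega$), I would conclude that $Y$ converges almost surely to a finite limit. Since $U$ is increasing and $0 \leq U(t) \leq Y(t)$, this forces $\lim_{t\to+\infty} U(t) < +\infty$ a.s., and then $\lim_{t\to+\infty} X(t) = \lim_{t\to+\infty} Y(t) - \lim_{t\to+\infty} U(t)$ exists and is finite a.s. The main technical obstacle is carrying out the three localizations consistently so that each truncation step preserves both the submartingale structure and the $L^1$-bound required by Doob's theorem; once this bookkeeping is arranged, the successive limits $n \to +\infty$ and $k \to +\infty$ follow by standard monotone-class arguments.
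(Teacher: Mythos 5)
The paper does not prove this statement (it is imported from Mao's monograph), so your proposal must stand on its own. The reduction to $Y(t) = X(t) + U(t) = \xi + A(t) + N(t)$, the truncations $\tau_k$ and $E_k = \{\xi \le k\}$, the uniform $L^1$ bound, and the final deductions ($0 \le U \le Y$ with $U$ increasing forces $\lim_{t\to+\infty} U(t) < +\infty$, then $X = Y - U$ converges) are all correct and follow the standard route. The genuine gap is the step ``passing first $n \to +\infty$.'' Doob's theorem applied to $Y^{\rho_{k,n}}\mathbf{1}_{E_k}$ with $\rho_{k,n} = \tau_k \wedge \sigma_n$ only tells you that the \emph{stopped} process converges as $t \to +\infty$; on $\{\sigma_n < +\infty\}$ this is vacuous, since the stopped path is eventually frozen at $Y(\rho_{k,n})$ and carries no information about $Y(t)$ for $t > \sigma_n$. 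A reducing sequence $\sigma_n \uparrow +\infty$ a.s.\ need never take the value $+\infty$ (think of first hitting times of levels by an unbounded local martingale), so the events $\{\rho_{k,n} = +\infty\}$ on which you actually learn something about $\lim_{t\to+\infty} Y^{\tau_k}(t)$ can all be null. Continuity of $Y$ together with $\rho_{k,n}\to\tau_k$ does not repair this: a.s.\ convergence in $t$ of each $Y^{\rho_{k,n}}$ does not pass to the limit in $n$. This interchange is precisely where the content of the theorem lives; it is not bookkeeping.

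Two standard ways to close the gap. (a) Drop $\sigma_n$ altogether: on $E_k$, nonnegativity of $Y$ gives $N^{\tau_k}\mathbf{1}_{E_k} = (Y^{\tau_k}-\xi-A^{\tau_k})\mathbf{1}_{E_k} \ge -2k$, so $N^{\tau_k}\mathbf{1}_{E_k}+2k$ is a nonnegative continuous local martingale, hence (by Fatou along any reducing sequence) a nonnegative supermartingale, hence a.s.\ convergent by Doob with no localization of $N$ needed; letting $k \to +\infty$ yields a.s.\ convergence of $N$, hence of $Y$, and your endgame applies. (b) Invoke the dichotomy for continuous local martingales (Dambis--Dubins--Schwarz): either $\lim_{t\to+\infty} N(t)$ exists finitely or $\liminf_{t\to+\infty} N(t) = -\infty$; since $N(t) \ge -\xi - \lim_{t\to+\infty}A(t) > -\infty$ a.s., the second alternative is excluded. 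One could also rescue your scheme by applying the upcrossing inequality to $Y^{\rho_{k,n}}\mathbf{1}_{E_k}$, noting the bound is uniform in $n$ and $T$, passing to the limit by monotone convergence of the upcrossing counts, and using Fatou to rule out an infinite limit --- but that is an essentially different argument from the one you wrote down.
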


In the analysis of the dynamical system governed by an operator we shall make use of the following lemma.
\begin{lem}\label{lem:help-convergence-analysis-operator-case}
    Let $A,\ B,\ C \in \R$ be such that $A\neq 0$ and $B^2-AC \leq 0$. The following statements are true:
    \begin{enumerate}[(i)]
        \item If $A>0$, then it holds
        \begin{align*}
            A\|P\|^2 + 2B\langle P,Q\rangle + C\|Q\|^2 \geq 0 \quad \forall P,Q\in\mathcal{H};
        \end{align*}
        \item If $A<0$, then it holds
        \begin{align*}
            A\|P\|^2 + 2B\langle P,Q \rangle + C\|Q\|^2 \leq 0 \quad \forall P,Q\in\mathcal{H}.
        \end{align*}
    \end{enumerate}
\end{lem}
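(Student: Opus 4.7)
The plan is to prove the lemma by the classical completion-of-the-square trick, applied directly to the Hilbert-space-valued quadratic form. Since $A \neq 0$, I would write
\begin{align*}
A\|P\|^2 + 2B\langle P, Q\rangle + C\|Q\|^2 = A\left\|P + \tfrac{B}{A}Q\right\|^2 + \tfrac{AC - B^2}{A}\|Q\|^2
\end{align*}
for arbitrary $P, Q \in \mathcal{H}$. This identity holds because expanding $A\|P + (B/A)Q\|^2 = A\|P\|^2 + 2B\langle P, Q\rangle + (B^2/A)\|Q\|^2$ and adding $\tfrac{AC-B^2}{A}\|Q\|^2$ recovers the original expression.

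For part (i), when $A > 0$ and $B^2 - AC \leq 0$, the first term on the right-hand side is nonnegative because $A > 0$, and the second is nonnegative because $AC - B^2 \geq 0$ and $A > 0$, so their sum is nonnegative. For part (ii), when $A < 0$ and $B^2 - AC \leq 0$, the first term is nonpositive and, since $AC - B^2 \geq 0$ while $A < 0$, the coefficient $(AC - B^2)/A$ is nonpositive, so the second term is also nonpositive; hence the sum is nonpositive.

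There is no real obstacle here: the only thing to verify carefully is the algebraic identity above, which is immediate from expanding $\|P + (B/A)Q\|^2$ using the inner product in $\mathcal{H}$, and the sign analysis of the two resulting summands in each of the two cases. Note that the argument does not require $\mathcal{H}$ to be finite-dimensional, since everything reduces to the scalar quantities $\|P\|^2$, $\|Q\|^2$, and $\langle P, Q\rangle$.
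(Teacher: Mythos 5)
Your proof is correct. The paper states this lemma in the appendix without any proof (it is treated as an elementary fact), and your completion-of-the-square identity
\begin{align*}
A\|P\|^2 + 2B\langle P, Q\rangle + C\|Q\|^2 = A\left\|P + \tfrac{B}{A}Q\right\|^2 + \tfrac{AC - B^2}{A}\|Q\|^2,
\end{align*}
valid in any inner product space, together with the sign analysis of the two summands, is precisely the standard argument that fills this gap; there is nothing to compare against and nothing missing.
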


\printbibliography[]

\end{document}